\journal{Journal of Computational Physics} 
\algnewcommand{\LineComment}[1]{\State \(\triangleright\) #1}
\newcommand{\mc}[1]{\mathcal{#1}}
\renewcommand{\vec}[1]{\boldsymbol{#1}}
\newcommand{\vech}[1]{\widehat{\boldsymbol{#1}}}
\newcommand{\mat}[1]{\boldsymbol{#1}}
\newcommand{\B}[1]{\boldsymbol{#1}}
\newcommand{\R}{\mathbb{R}}
\newcommand{\ten}[1]{\boldsymbol{\mathcal{#1}}}
\newcommand{\tenh}[1]{\widehat{\boldsymbol{\mc{#1}}}}
\newcommand{\mathat}[1]{\widetilde{\boldsymbol{#1}}}
\NewDocumentCommand{\Tcost}{o m m}{%
  T_{\mathrm{#2,\,#3}}^{\IfValueT{#1}{#1}}%
}
\NewDocumentCommand{\Ker}{o m m o}{%
  \ensuremath{%
    \text{ker}_{\mathrm{#2,\,#3}}%
    \IfValueT{#1}{^{#1}}%
    \IfValueT{#4}{\!\left(#4\right)}%
  }%
}
\newtheorem{definition}{Definition}
\newtheorem{lemma}{Lemma}
\renewcommand{\t}{^\top}
\newcommand{\hathat}[1]{% 
\begingroup%
  \let\macc@kerna\z@%
  \let\macc@kernb\z@%
  \let\macc@nucleus\@empty%
  \hat{\raisebox{.37ex}{\vphantom{\ensuremath{#1}}}\smash{\hat{#1}}}%
\endgroup%
}
\begin{document}

\begin{frontmatter}

\title{Parametric Hierarchical Matrix Approximations to Kernel Matrices\tnoteref{t1}}
    
    \tnotetext[t1]{This work was supported in part by the National Science Foundation and the Department of Energy through the awards DMS-1745654, DMS-1845406, DMS-2411198, DE-SC0025262. Vishwas Rao is supported by the U.S. Department of Energy, Office of Science, Advanced Scientific Computing Research Program under contract DE-AC02-06CH11357}

    \author[1]{Abraham Khan}
    \ead{awkhan3@ncsu.edu}
    
    \author[1]{Chao Chen}
    \ead{cchen49@ncsu.edu}
    
    \author[2]{Vishwas Rao}
    \ead{vhebbur@anl.gov}
    
    \author[1]{Arvind K. Saibaba}
    \ead{asaibab@ncsu.edu}
    
    \affiliation[1]{organization={Department of Mathematics, North Carolina State University},
                    country={USA}}
    
    \affiliation[2]{organization={Mathematics and Computer Science Division, Argonne National Laboratory},
                    country={USA}}
\begin{abstract}
Kernel matrices are ubiquitous in computational mathematics, often arising from applications in machine learning and scientific computing. In two or three spatial or feature dimensions, such problems can be approximated efficiently  by a class of matrices known as hierarchical matrices. A hierarchical matrix consists of a hierarchy of small near-field blocks (or sub-matrices) stored in a dense format and large far-field blocks approximated by low-rank matrices. Standard methods for forming hierarchical matrices do not account for the fact that kernel matrices depend on specific hyperparameters; for example, in the context of Gaussian processes, hyperparameters must be optimized over a fixed parameter space.  We introduce a new class of hierarchical matrices, namely, parametric (parameter-dependent) hierarchical matrices. Members of this new class are parametric $\mc{H}$-matrices and parametric $\mc{H}^{2}$-matrices. The construction of a parametric hierarchical matrix follows an offline-online paradigm. In the offline stage, the near-field and far-field blocks are approximated by using polynomial approximation and tensor compression.  In the online stage, for a particular hyperparameter, the parametric hierarchical matrix is  instantiated efficiently as a standard hierarchical matrix. The asymptotic costs for storage and computation in the offline stage are comparable to the corresponding standard approaches of forming a hierarchical matrix.  However, the online stage of our approach requires no new kernel evaluations, and the far-field blocks can be computed more efficiently than standard approaches. {Numerical experiments show over $100\times$ speedups compared with existing techniques.}
\end{abstract}

\end{frontmatter}

\section{Introduction} \label{sec:Introduction}

Kernel matrices are defined by a kernel function and a set of points, and the entries of these matrices are formed by pairwise kernel evaluations. They arise in a wide variety of applications, including integral equations, n-body computations, Gaussian processes (GPs), and inverse problems. A central computational bottleneck in dealing with kernel matrices is that they are typically dense. The cost of explicitly storing a dense $n\times n$ matrix is $n^2$ storage units, and the cost of a matrix-vector multiplication (or MVM) is $\mc{O}(n^2)$ floating-point operations (or FLOPs). This is computationally challenging, or even prohibitively expensive, if $n \gg 10^4$. A range of techniques has been developed for approximating kernel matrices, including low-rank techniques~\cite{rahimi2007random,williams2000using,fine2001efficient}, the fast multipole method (FMM)~\cite{greengard1987fast}, the black-box fast multipole method (BBFMM)~\cite{fong2009black}, hierarchical matrices~\cite{borm2003introduction, hackbusch2015hierarchical, Ballani2016}, and the nonuniform fast Fourier transforms~\cite{greengard2004accelerating}. 
We note that the FMM is designed for certain kernels, while the other previously stated methods are black-box with regard to kernel choice. A more general treatment of matrices with hierarchical-like structure is given in \cite{Ballani2016}. In this work, we  focus on hierarchical matrices, particularly the $\mc{H}$-matrix~\cite{hackbusch1999sparse,hackbusch2000sparse} and $\mc{H}^2$-matrix formats~\cite{hackbusch2002data}. To summarize, a hierarchical matrix consists of a hierarchy of small near-field blocks (or sub-matrices) stored in a dense format and large far-field blocks approximated by low-rank matrices.

In many applications, the kernel depends on certain parameters, which we call hyperparameters. For example, in GPs and Bayesian inverse problems, in order to estimate the hyperparameters from the data, an optimization problem is solved (e.g., maximum likelihood or marginalized maximum a posteriori estimation), which requires repeatedly forming the kernel matrices for a range of parameters. Even though existing techniques for handling kernel matrices have linear or log-linear complexity in $n$, for each hyperparameter evaluation the approximations must be computed from ``scratch,'' which is computationally expensive. Thus, methods are needed that   can efficiently approximate and store kernel matrices, not only for a single hyperparameter, but also  for multiple hyperparameters.

For a formal definition, let $X = (\vec{x}_{i})_{i=1}^{n}$ be a sequence of points where $\vec{x}_{i} \in \R^{d}$ for $1 \le i \le n$. A parametric kernel function is a function of type $\kappa:\R^{d} \times \R^{d} \times \Theta \rightarrow \R$, where $\Theta \subset \R^{d_\theta}$ is the parameter space. For a parameter  $\vec{\theta} \in \Theta$, the parametric kernel matrix $\mat{K}(X, X; \vec{\theta}) \in \R^{n \times n}$ is defined by the entries 
$$ [\mat{K}(X, X, \vec{\theta})]_{i, j}  = \kappa(\vec{x}_{ i}, \vec{x}_{j}; \vec{\theta}), \quad \quad  1 \le i \le n, 1 \le j \le n.$$
Note that for a fixed parameter $\bar{\vec{\theta}} \in \Theta$, the function $\kappa(\cdot, \cdot, \bar{\vec{\theta}})$ is a kernel function, and the matrix $\mat{K}({X}, {X}; \bar{\vec{\theta}})$ is a kernel matrix.

We assume that the points in $X$  are enclosed in a $d$-dimensional hypercube $B = \times_{i=1}^{d}[\alpha, \beta]$, where $\times_{i=1}^{d}$ represents the iterated Cartesian product, and that $\Theta$ is enclosed in a $d_\theta$-dimensional hypercube $ B_{\theta} = \times_{i=1}^{d_{\theta}}[\alpha^{\theta}_i, \beta^{\theta}_i]$. In the context of the applications we consider, the spatial dimension $(d)$ and parameter dimension $(d_{\theta})$ are both $1-3$. Furthermore, we only consider isotropic kernels of the form $\kappa(\vec{x},\vec{y};\vec\theta) = f_{\vec\theta}(\|\vec{x}-\vec{y}\|_2)$ for some parametric function $f_{\vec\theta}$. We also define the total dimension $\Delta$ as the sum of the spatial dimensions and the parameter dimensions, that is,
\begin{equation}\label{eqn:D}
    \Delta = 2d+d_\theta.
\end{equation}

In the standard approach, a new hierarchical matrix approximation has to be constructed for each instance of the hyperparameter, and these methods can have optimal complexity with respect to $n$; however, importantly, the prefactor can be large.  To remedy this issue, we introduce a new class of hierarchical matrices, namely, parametric hierarchical matrices, which are computed over a fixed parameter space $\Theta$. Our approach is divided into two stages: an offline stage and an online stage. First, a cluster tree and block cluster tree are constructed in $\mc{O}(n\log(n))$\footnote{where $\log$ refers to the logarithm in base 2} FLOPs. Next,  an offline precomputation stage, where the parametric kernel matrix is approximated as a parametric $\mc{H}$-matrix in $\mc{O}(n\log(n))$ FLOPs or a parametric $\mc{H}^{2}$-matrix in $\mc{O}(n)$ FLOPs. Finally,
in the online stage, for a particular hyperparameter $\bar{\vec{\theta}} \in \Theta$, we can rapidly form a $\mc{H}$-matrix or a $\mc{H}^{2}$-matrix approximation of the kernel matrix $\mat{K}({X},{X}; \bar{\vec{\theta}})$ in $\mc{O}(n)$ FLOPs.

Our method relies on Chebyshev polynomial approximations of the kernel, followed by tensor train compression of the coefficient tensors to construct a parametric hierarchical matrix. The advantage of our approach is that the online stage requires no expensive kernel evaluations and the far-field low-rank blocks can be computed much more efficiently when compared with the standard approach, because of a reduction in the prefactor term. Note, we will consider certain prefactor terms in the more detailed complexity estimates later in the paper.
Parametric $\mc{H}^{2}$-matrices inherit the benefits that $\mc{H}^{2}$-matrices have over $\mc{H}$-matrices. For example, a parametric $\mc{H}^{2}$-matrix  requires only $\mc{O}(n)$ storage units to store, and the induced $\mc{H}^{2}$-matrix approximation can perform MVM  in $\mc{O}(n)$ FLOPs.

\subsection{Contributions and Outline} The  contributions and features of our work are as follows:
\begin{enumerate}
    \item  We propose a new class of hierarchical matrices, namely, parametric hierarchical matrices, in Section~\ref{sec:param_h_mat}, which are computed over a fixed parameter space $\Theta$. Members of this class are parametric $\mc{H}$-matrices and parametric $\mc{H}^{2}$-matrices. The methods to construct the members are flexible in that we can use different parametric compressed approximations to construct them.
    \item For the far-field blocks, which are approximated by  using low-rank matrices, we use a parametric kernel low-rank approximation developed in~\cite{khan2025parametric}. For the near-field blocks, which are typically stored as dense matrices, we derive a new parametric compressed approximation that  uses a polynomial approximation in the parameter domain, followed by tensor train compression. 
    \item We provide a detailed analysis of the computational costs of both parametric $\mc{H}$-matrices and parametric $\mc{H}^{2}$-matrices, in Section~\ref{sec:comp_cost_and_storage}. The computational cost in the offline stage is $\mc{O}(n\log n)$ FLOPs for parametric $\mc{H}$-matrices and $\mc{O}(n)$ FLOPs for parametric $\mc{H}^{2}$-matrices, and the computational cost of the online stage is $\mc{O}(n)$ FLOPS for both. 
    \item We demonstrate their efficacy on various parametric kernels arising from GPs and radial basis interpolation in Section 6. {We observe speedups of over $100\times$ compared with existing methods.}
\end{enumerate}
In Section~\ref{sec:background}, we provide background on tensors, tensor-train decomposition, and polynomial interpolation. In Section~\ref{sec:H_matrix_review}, we provide a review of hierarchical matrices; in particular, $\mc{H}$-matrices and $\mc{H}^{2}$-matrices. In~\ref{ssec:PTTK}, we summarize the PTTK method that was introduced in \cite{khan2025parametric}. Lastly, the software to reproduce our numerical experiments, in Section~\ref{sec:num_experiments}, is given in \url{https://github.com/awkhan3/ParametricHierarchicalMatrices}.

\subsection{Related Work} 
 Approximating a kernel matrix as a hierarchical matrix has been explored in various papers, such as \cite{hackbusch2004hierarchical, cai2024data, MR2854612, iske2017hierarchical, wang2021pbbfmm3d, li2025hierarchical}. A few recent papers have considered parametric low-rank approximations to kernel matrices~\cite{shustin2022gauss,kressner2020certified,kressner2024randomized,park2025low, greengard2025efficient}.
To our knowledge, only \cite{gopal2022broadband} has discussed the parametric hierarchical matrix approximation, but the discussion is limited to one parameter and  specific kernels.  In this paper, we apply tensor-based methods  to construct parametric hierarchical matrices; for the non-parametric case, obtaining a hierarchical matrix approximation of a kernel matrix using tensor-based methods has been discussed in two papers:~\cite{corona2017tensor, li2025hierarchical}.

\section{Background}

\label{sec:background}

\subsection{Tensor and Tensor Train Decomposition}\label{ssec:background_tensors}
Tensor $\ten{X} \in \R^{m_1 \times m_2 \cdots \times m_q}$, where $q \in \mathbb{N}$, is defined to be a multidimensional array. Selecting the $(i_1, i_2, \dots , i_q)$ element of the tensor $\ten{X}$ is represented by $[\ten{X}]_{i_1, i_2, \dots, i_q}$ or $x_{i_1, i_2, \dots, i_q}$. In this paper, the Chebyshev norm is the only tensor-based norm that will be used, and it is defined as $\|\ten{X}\|_C = \max_{i_1,\dots,i_q}|[\ten{X}]_{i_1, i_2, \dots, i_q}|.$ 

\paragraph{Reshape Command}
Let $(i_{1}, i_{2}, \ldots, i_{j})$ be an arbitrary multi-index for $1 \leq j \leq q$, where $1 \leq i_{j} \leq m_{j}$. We denote the index $\overline{i_{1} i_{2} \cdots i_{j}} \in \mathbb{N}$ to be the little endian flattening of the multi-index into a single index defined by the formula
\[
\overline{i_{1} i_{2} \cdots i_{j}} 
= i_{1} + (i_{2}-1)m_{1} + (i_{3}-1)m_{1}m_{2} + \cdots + (i_{j}-1)m_{1}m_{2}\cdots m_{j-1}. 
\]
We denote $\text{reshape}$ to be the MATLAB reshape command. For example, if $\ten{Y} = \text{reshape}(\ten{X},[m_1, m_2, m_3, m_4, \dots, m_q])$, then $\ten{Y} \in \R^{m_1m_2 \times m_3 \times m_4 \times  \cdots \times m_q}$ with entries 
$$[\ten{Y}]_{\overline{i_1i_2}, i_3, i_4, \dots, i_{q} } = [\ten{X}]_{i_1, i_2, \dots, i_{q}}, \qquad  1 \le t \le q, ~ 1 \le i_t \le m_t.$$
For integer $1 \le j \le q$, another case of interest is $\ten{Y} = \text{reshape}(\ten{X}, [\prod_{i=1}^{j}m_i, \prod_{i=j+1}^{q}m_i ])$, where $\ten{Y} \in \R^{m_1m_2\cdots m_j \times m_{j+1}m_{j+2}\cdots m_q}$ has entries 
$$\ten{Y}_{\overline{i_1i_2\dots i_j}, \overline{i_{j+1}i_{j+2}\dots i_q}} = [\ten{X}]_{i_1, i_2, \cdots, i_q}, \qquad 1 \le i_t \le m_t. $$

\paragraph{Mode-k Product}
For a matrix $\mat{A} \in \mathbb{R}^{m \times m_k}$, one can define the mode-$k$ product of $\ten{X}$ w.r.t.\  $\mat{A}$ as
$\ten{Y} = \ten{X} \times_{k} \mat{A}$, where the tensor $\ten{Y}$ has entries
$$y_{i_1, \dots, i_{k-1}, j, i_{k+1}, \dots, i_N} = \sum^{m_k}_{i_k=1}x_{i_1,\dots, i_d}[\mat{A}]_{j, i_k} , \qquad 1 \le j \le m.$$  

\paragraph{Tensor Train Decomposition}
 The tensor train (TT) format was first introduced in \cite{oseledets2011tensor}. The tensor $\ten{X}$ admits a TT-decomposition if it can be represented by a sequence of third-order tensors $\ten{G}_1,\dots,\ten{G}_q$, where $\ten{G}_j \in \mathbb{R}^{r_{j-1} \times m_j \times r_{j}}$ for $1 \leq j \leq q$  is referred to as the TT-cores and $r_0, \dots, r_q$ as the TT-ranks (with the convention  $r_0 = r_{q} = 1$). The entries of the tensor $\ten{X}$ are given by the  formula
$$[\ten{X}]_{i_1,\dots,i_q} = \sum^{r_1}_{s_1 = 1} \dots \sum^{r_{q-1}}_{s_{q-1}=1} [\ten{G}_1]_{1, i_1, s_1} [\ten{G}_2]_{s_1, i_2, s_2} \cdots [\ten{G}_q]_{s_{q-1}, i_q, 1},$$
where $1 \le j \le q, ~ 1 \le i_j \le m_j$.
If $\ten{X}$ admits a TT-decomposition, then we denote it as 
$$\ten{X} = [\ten{G}_1, \ten{G}_2, \dots,\ten{G}_q ].$$

Often, the tensor $\ten{X}$ does not admit an exact TT-decomposition with small TT-ranks.  We can obtain an approximation of $\ten{X}$ in the TT format using either the TT-SVD algorithm (see Algorithm 1 in \cite{oseledets2011tensor}) or a variant of TT-cross \cite{oseledets2010tt, SAVOSTYANOV2014217}. The TT-SVD algorithm can be cost-prohibitive if the tensor is large; hence, in this paper, we use a variant of Algorithm 2 from \cite{SAVOSTYANOV2014217}. The algorithm applies partially pivoted adaptive cross approximation (for example, Algorithm A.1 in \cite{khan2025parametric}) to each of the super cores. Thus, the computational cost (in FLOPs) of the algorithm and the number of evaluations of tensor entries are
\[
\mc{O}\!\Big(r^2(m_1+m_q)+r^3\sum_{i=2}^{q-1} m_i\Big),
\quad
\mc{O}\!\Big(r(m_1+m_q)+r^2\sum_{i=2}^{q-1} m_i\Big),
\]
 respectively, where $r=\max_{1 \le i \le q}r_i$. The algorithm employs  heuristics in order to estimate the relative error of the approximation $\tenh{X}$ in the Chebyshev norm. In particular, we use Algorithm~A.2 in \cite{khan2025parametric} without line 1, since we initialize the cross approximation with a single index. In practice, we apply TT-rounding (Algorithm~2 in \cite{oseledets2011tensor}) to $\tenh{X}$ if it is obtained by using TT-cross. For ease of presentation, we will assume that no TT-rank reduction occurs during the TT-rounding algorithm. 

\paragraph{Reshape Formula} 
Assume that $\ten{X}$ admits a TT-decomposition.
For a  TT-core $\ten{G}_{i} \in \R^{r_{i-1} \times m_i \times r_i}$, where $1 \le i \le q$, the following notations are defined:
$$\mat{G}_{i}^{\{1\}} := \text{reshape}(\ten{G}_{i},[r_{i-1}, m_ir_{i}]), ~ ~ \mat{G}_{i}^{\{2\}} := \text{reshape}(\ten{G}_{i}, [r_{i-1}\cdot m_i, r_{i}]) .$$

\begin{comment}
Assume that $q = 2t$, where $t > 1$ is an integer. We now obtain the following equation by using Equation (38) from \cite{shi2023parallel}: 
\begin{equation}\label{eqn:shi_formula}
\begin{aligned}
\text{reshape}\!\left(
  \ten{X},
  \bigl[\;\prod_{i=1}^t m_i,\; \prod_{i=t+1}^{2t} m_i \bigr]
\right)
&= \prod_{i=1}^{t-1}
\Bigl(
  \mat{I}_{\prod_{k=i+1}^{t} m_k} \;\otimes\; \mat{G}_i^{\{2\}}
\Bigr) \; \mat{G}_t^{\{2\}} \, \mat{G}_{t+1}^{\{1\}}
\\
&\quad \times \prod_{i=t+2}^{q}
\Bigl(
  \mat{G}_i^{\{1\}} \;\otimes\; \mat{I}_{\prod_{k=t+1}^{i-1} m_k}
\Bigr).
\end{aligned}
\end{equation}
% I index set ---> change to J index set. 
\end{comment}

\subsection{Polynomial Interpolation of \texorpdfstring{$\kappa$}{kappa}}\label{ssec:polynomial_int}
Assuming that the kernel is sufficiently smooth, we can use a polynomial basis to approximate it. This is the key idea used to obtain low-rank approximations in BBFMM and hierarchical matrix approaches. Consider nodes $\sigma, \tau \in \mc{T}_{I}$  of the cluster tree $\mc{T}_{I}$ constructed in Section~\ref{ssec:cluster_tree}. Let ${X}_{\sigma}$ and ${X}_{\tau}$ denote their restrictions (see~\eqref{eqn:restriction}) in the point set ${X}$, with  associated bounding hypercubes $B_\sigma  = \times_{i=1}^{d} B_{\sigma,i}\subset \R^d$ and $B_\tau = \times_{i=1}^{d} B_{\tau, i}\subset \R^d$. Note that $\{B_{\sigma,i}\}_{i=1}^d$ and $\{B_{\tau, i} \}_{i=1}^{d}$ represent the intervals that define the hypercubes. In Section~\ref{ssec:cluster_tree} we will see how to partition the points in $X$ to identify the pairs $\sigma\times \tau$, which may correspond to either  a far-field or near-field block cluster. Now, we will construct polynomial approximations to $\kappa$ that will serve to approximate sub-matrices of the parametric kernel matrix $\mat{K}(X, X; \vec{\theta})$.

Define the $p_{s} > 0$ Chebyshev nodes of the first kind over the interval $B_{\sigma, 1}$ as 
$ \eta_1^{(B_{\sigma, 1})} < \eta_2^{(B_{\sigma, 1})} < \cdots < \eta_{p-1}^{(B_{\sigma, 1})} < \eta_{p}^{(B_{\sigma, 1})}.$ Then, define the degree $p_{s}-1$ Lagrange polynomials $\ell_1^{(B_{\sigma, 1})}, \ell_2^{(B_{\sigma, 1})}, \dots, \ell_p^{(B_{\sigma, 1})}$ such that
\[
\ell_k^{(B_{\sigma, 1})}(x) = \prod_{\substack{1 \le i \le p \\ i \ne k}} \frac{x - \eta_i^{(B_{\sigma,1})}}{\eta_k^{(B_{\sigma, 1})} - \eta_i^{(B_{\sigma, 1})}}.
\]

Repeat the same procedure for intervals $B_{\sigma, 2}, B_{\sigma,3}, \dots, B_{\sigma, d}$, and construct their corresponding Chebyshev nodes and Lagrange polynomials. For the hypercube $B_{\sigma}$, we define the multidimensional Chebyshev nodes and Lagrange polynomials with the following formulas:
$$\begin{aligned}\vec{\eta}^{(B_s)}_{\vec{\imath}} = & \>  (\eta^{(B_{\sigma, 1})}_{\imath_1}, \eta^{(B_{\sigma, 2})}_{\imath_2}, \dots, \eta^{(B_{\sigma,d})}_{\imath_d}), \\ \Lambda_{\vec{\imath}}^{(B_s)}(\vec{x}) = & \> \ell_{\imath_1}^{(B_{\sigma, 1})}(x_1) \ell_{\imath_2}^{(B_{\sigma, 2})}(x_2) \cdots \ell_{\imath_d}^{{(B_{\sigma,d})}}(x_d),\end{aligned}$$ 

where $\vec{x} \in B_{\sigma}$ and $\vec{\imath} \in \{1, 2, \dots, p_{s}\}^{d}$. For conciseness, denote $[k]^{d} = \{1, 2, \dots, k\}^{d}$ such that $k \in \mathbb{N}$.
Repeat the same procedures for the hypercubes $B_{\tau}$ and $B_{\theta}$, and construct their corresponding multidimensional Lagrange polynomials and Chebyshev nodes, using $p_{s}$ Chebyshev nodes for the hypercube $B_{\tau}$ and $p_{\theta}$ Chebyshev nodes for the hypercube $B_{\theta}$.

We can now define the multidimensional interpolants of $\kappa$ that will be used in this paper. Let $\vec{x} \in B_{\sigma}, \vec{y} \in B_{\tau}$, and $\vec{\theta} \in B_{\theta}$. The first formula interpolates in all three variables ($\vec{x}$, $\vec{y}$, and $\vec{\theta}$):
\begin{align}\label{eqn:int_of_kappa_1}
\phi^{(\sigma \times \tau)}(\vec{x}, \vec{y}; \vec{\theta})
   &= \sum_{\vec{\imath} \in [p_s]^{d}}
      \sum_{\vec{k} \in [p_\theta]^{d_\theta}}
      \sum_{\vec{\jmath} \in [p_s]^d}
      \kappa(\vec{\eta}^{(B_{\sigma})}_{\vec{\imath}}, 
             \vec{\eta}^{(B_{\tau})}_{\vec{\jmath}}; 
             \vec{\eta}_{\vec{k}}^{(B_{\theta})}) \notag \\
   &\quad\times 
      \Lambda_{\vec{\imath}}^{(B_{\sigma})}(\vec{x})\,
      \Lambda^{(B_{\theta})}_{\vec{k}}(\vec{\theta})\,
      \Lambda_{\vec{\jmath}}^{(B_{\tau})}(\vec{y}).
\end{align}
The second formula interpolates only in the spatial variables ($\vec{x}$ and $\vec{y}$):
\begin{equation}\label{eqn:int_of_kappa_2}
\varphi^{(\sigma \times \tau)}(\vec{x}, \vec{y}; \vec{\theta}) = \sum_{\vec{\imath} \in [p_s]^{d}}\sum_{\vec{\jmath} \in [p_s]^d} \kappa(\vec{\eta}^{(B_{\sigma})}_{\vec{\imath}}, \vec{\eta}^{(B_{\tau})}_{\vec{\jmath}}; \vec{\theta})  \Lambda_{\vec{\imath}}^{(B_s)}(\vec{x}) \Lambda_{\vec{\jmath}}^{(B_{\tau})}(\vec{y}).
\end{equation}
The third formula interpolates the kernel only in the parameter variables $\vec\theta$:
\begin{equation}\label{eqn:int_of_kappa_3}
\psi(\vec{x}, \vec{y}; \vec{\theta}) = \sum_{\vec{k} \in [p_\theta]^{d_\theta}} \kappa(\vec{x}, \vec{y}; ~\vec{\eta}_{\vec{k}}^{(B_{\theta})}) \Lambda^{(B_{\theta})}_{\vec{k}}(\vec{\theta}).
\end{equation}
Note that if we are interpolating with respect to the parameter space $\Theta$, then we will use $p_{\theta}$ Chebyshev nodes; otherwise, we will use $p_{s}$ Chebyshev nodes. Define $p = \max \{ p_{s}, p_{\theta}\}$ to be the global number of Chebyshev nodes taken.

\section{Review Of Hierarchical Matrices}\label{sec:H_matrix_review}
We will now review the fundamental mathematical structures used to construct $\mc{H}$-matrices and $\mc{H}^{2}$-matrices. This section is heavily inspired by the  exposition in~\cite{borm2003introduction, BORM2025106190}.

\subsection{Fundamentals of Trees and Index Sets}\label{ssec:FOT}
A \textit{tree} $\mathcal{T}$ is a finite set of nodes with a distinguished node $t \in \mathcal{T}$ called the \textit{root}, which we denote as $\text{root}(\mc{T})$. A tree also satisfies a parent-child relation such that the root has no parent and every other node has exactly one parent. 

Let $\mathcal{T}$ be a tree. We will also need the following definitions associated with the tree. 
\begin{enumerate}
    \item \textbf{Parent:}  $\text{parent}(t)$ denotes the parent of $t \in \mc{T}$.
    \item \textbf{Children:} $\text{children}(t) = \{t' \in \mc{T}: t = \text{parent}(t')  \}$.
    \item \textbf{Leaf Node:} \textit{leaf node} is a node $t \in \mc{T}$ with no children.
    \item \textbf{Level:} \textit{level} of a node $t$ is defined recursively, as follows: \[
\text{level}(t) =
\begin{cases} 
    0, & \text{if } t = \text{root}(\mc{T}), \\
    \text{level}(\text{parent}(t)) + 1, & \text{otherwise}.
\end{cases}
\]
    \item \textbf{Leaf Set:} $L(\mc{T})$ is the set containing all leaf nodes of $\mc{T}$. 
\end{enumerate}

\paragraph{Index Sets}
We  define the index set $I = \{1, 2, \dots, n\}$ of integers from $1$ to $n$. Each point in $I$ uniquely corresponds to a point in ${X}$; hence,  $|I| = n$. In addition, $I$ is an ordered set with the standard ordering of the natural numbers, and every subset (index set) $J \subseteq I$ of $I$ inherits the order of $I$. For an arbitrary vector $\vec{a} \in \R^{n}$, we define $\vec{a}_{|J} \in \R^{|J|}$  as the restriction of the entries of $\vec{a}$ with respect to an ordered index set $J$.

\subsection{Cluster Tree}\label{ssec:cluster_tree}
We begin with a variation of the standard definition of a cluster tree presented in Section~2.1 of \cite{borm2003introduction}.
\begin{definition}[Cluster Tree]\label{def:clustertree} 
    For an index set $J \subset \mathbb{N}$, a tree $\mc{T}_{J}$ is a \textit{cluster tree} if each node $\sigma \in \mc{T}_{J}$ has an associated index set $J_{\sigma} \subseteq J$ and the root node has the associated index set $J$. For every non-leaf node $\sigma \in \mc{T}_{J}$:
\begin{enumerate}
    \item For all distinct $\sigma', \sigma'' \in \text{children}(\sigma)$,  
           $J_{\sigma'} \cap J_{\sigma''} = \emptyset$.
    \item $J_{\sigma} = \bigcup_{\sigma' \in \text{children}(\sigma)} J_{\sigma'}$.
\end{enumerate}

\end{definition}

Next, we define $\mc{T}_{I}$ as the cluster tree with respect to the index set $I$. Let $\sigma \in \mc{T}_{I}$. The restriction of $X$ with respect to $I_{\sigma}$ is
\begin{equation}\label{eqn:restriction}
    X_{\sigma} = (\vec{x}_{\sigma,j})_{j=1}^{n_{\sigma}}, \quad 
    n_{\sigma} = |I_{\sigma}|,
\end{equation}
where the ordering is inherited from $X$. The nodes of the cluster tree $\mc{T}_{I}$ will be augmented with the following additional properties:
\begin{enumerate}
    \item  For all $\sigma \in \mc{T}_{I}$, the node $\sigma$ has an associated hypercube $B_{\sigma}$ such that  $\mc{X}_{\sigma}  \subseteq B_{\sigma}.$
    \item The associated hypercube of  $\text{root}(\mc{T}_{I})$ is $B$.
\end{enumerate}
We will now describe an algorithm that will be used to recursively construct a cluster tree for the points in $\mc{X}$. We construct/instantiate the cluster tree $\mc{T}_{I}$ by constructing a root node with the associated set $I$ and associated hypercube $B$ and then passing the root node to Algorithm~\ref{alg:construct_cluster_tree} along with the maximum tree height $l_{\max} > 0$. Algorithm~\ref{alg:construct_cluster_tree} partitions $B$ by recursively dividing it into $2^{d}$ uniformly sized hypercubes at each level. In a bit more detail, at the first level we have $2^{d}$ uniformly sized hypercubes;  at the second level each hypercube is then split into $2^d$ hypercubes, so that we have $2^{2d}$ uniformly sized hypercubes; and at level $l\le l_{\max}$ we have $2^{dl}$ uniformly sized hypercubes. We demonstrate this partitioning of the domain in Figure~\ref{fig:box_split} for the case  $d = 2$.
\begin{figure}[!ht]
    \centering
    \includegraphics[width=\linewidth]{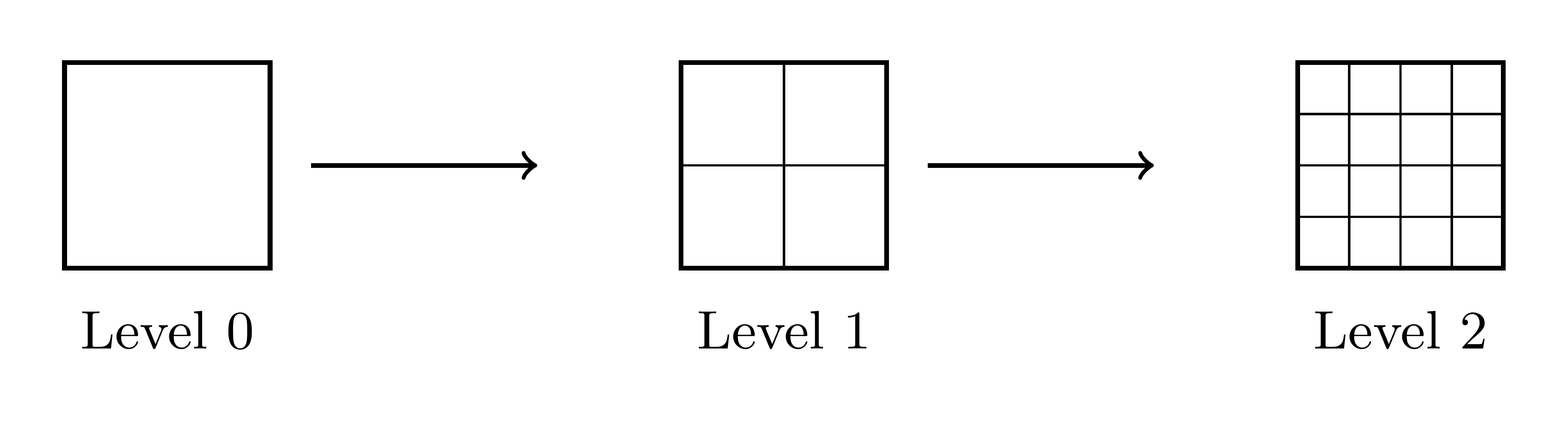}
    \caption{Partitioning of the domain $B$ by recursively dividing it into $4^l$ %
    uniformly hypercubes (squares) at levels $l = 0,1,2$.}
    \label{fig:box_split}
\end{figure}

Note that we assume that the points ${X}$ are uniformly distributed (although not necessarily uniformly spaced) in the hypercube $B$. Otherwise, pathological cases can occur: if $d = 1$, then  $x_i = \frac{1}{2^i}$ for $1 \le i \le n$. With these assumptions satisfied, the computational cost of Algorithm~\ref{alg:construct_cluster_tree} is $\mc{O}(n\log (n))$ FLOPs. Note that the tree $\mc{T}_{I}$ is a $2^{d}$-ary tree and all leaves of the tree are at level $l_{\max}$ by construction. 

 Let $\sigma \in \mc{T}_{I}$ with $\text{level}(\sigma) = l$. Algorithm~\ref{alg:construct_cluster_tree} partitions $B_{\sigma}$ by dividing it into $2^{d}$ hypercubes. Then, since the points in $\mc{X}$ are uniformly distributed, we can assume  that the following  is true:
\begin{equation}\label{eqn:nsigma}n_{\sigma} \le k_0(n/2^{d \cdot l}),\end{equation}
for some $k_0 > 0$ independent of $n$. This is important because, for the user-defined constant $l_{\max} > 0$, we set the constant $C_{\text{leaf}} = k_0(n/2^{d \cdot l_{\max}}).$ Hence, all leaf nodes  $\sigma \in L(\mc{T}_{I})$ satisfy the inequality, $n_{\sigma} \le  C_{\text{leaf}}.$ In practice, for any value of $n$, $l_{\max}$ is correspondingly chosen to be large enough so that $C_{\text{leaf}}$ does not depend on $n$. For ease of presentation, we will assume that  $k_0 = 1$.

\subsection{Cluster Basis}\label{sssec:Cluster_Basis}
In this section, we discuss the formation of the cluster basis; the cluster basis plays an important role in constructing parametric hierarchical matrices. Formally, a cluster basis $\{ \mat{U}_{\sigma} \}_{\sigma \in \mc{T}_{I}}$ is a family of matrices  that is indexed by nodes $\sigma \in \mc{T}_{I}$.

We will now demonstrate how to construct/instantiate the cluster basis $\{\mat{U}_{\sigma}\}_{\sigma \in \mc{T}_{I}}$. Let $\sigma \in \mc{T}_{I}$ with the corresponding hypercube $B_{\sigma} = \times_{i=1}^{d}B_{\sigma, i}$. We define the factor matrices $\mat{U}_{\sigma,1}, \mat{U}_{\sigma,2}, \dots, \mat{U}_{\sigma,d} \in \R^{n_{\sigma} \times p_s}$ with the following entries:
$$ [\mat{U}_{\sigma, k}]_{i, j} = \ell_{j}^{(B_{\sigma, k})}([\vec{x}_{\sigma, i}]_k), \quad 1 \le i \le n_{\sigma}, ~1\le k \le d, ~ 1 \le j \le p_s$$
where $\ell_{j}^{(B_{\sigma, k})}$ is the $p_s - 1$ degree Lagrange polynomial with respect to $B_{\sigma, k}$; for more information, see Section~\ref{ssec:polynomial_int}. 
Now, the cluster basis matrix $\mat{U}_{\sigma} \in \R^{n_{\sigma} \times p_s^d}$ can be defined in terms of the factor matrices with the  formula
$$ \mat{U}_{\sigma} = (\mat{U}_{\sigma, d} \ltimes \mat{U}_{\sigma, d-1} \ltimes \cdots \ltimes \mat{U}_{\sigma,1}),$$
where the symbol $\ltimes$ denotes the face-splitting product from \eqref{form:face_split}. In practice, the cluster basis matrix $\mat{U}_{\sigma}$ is stored implicitly in terms of its factor matrices $\mat{U}_{\sigma,1}, \mat{U}_{\sigma,2}, \dots, \mat{U}_{\sigma,d}$.

\subsection{Block Cluster Tree}\label{ssec:block_cluster_tree}
We define and construct the block cluster tree in this section. 
To this end, we introduce the concept of admissibility. For nodes $\sigma, \tau \in \mc{T}_{I}$, we say that $\sigma$ and $\tau$ are \textit{admissible}, for an admissibility parameter $\eta > 0$, whenever the following inequality  holds:
\begin{equation}\label{eqn:admissible}
\max\{\text{diam}(B_{\sigma}), \text{diam}(B_{\tau})\} \le \eta\, \text{dist}(B_{\sigma}, B_{\tau}).
\end{equation}
See~\ref{ssec:add_def} for definitions of the diameter of a cluster and the distance between clusters. In ~\cite{hackbusch2004hierarchical, hackbusch2015hierarchical}, this is referred to as strong admissibility, in contrast to weak admissibility, which  requires only that the two clusters (or their associating hypercubes) are non-overlapping.

For this paper, we fix the admissibility parameter $\eta = \sqrt{d}$. Fixing the admissibility parameter is done primarily  for pedagogical purposes, so that far-field block clusters correspond to far-field interactions and near-field block clusters correspond to near-field interactions; the terms far-field and near-field interactions are from the FMM and BBFMM. Figure~\ref{fig:admis_pic} demonstrates the near-field and far-field clusters associated with admissibility parameter $\eta = \sqrt{d}$ for spatial dimension $d = 2$.

\begin{figure}[!ht]
    \centering
    \includegraphics[width=0.25\linewidth]{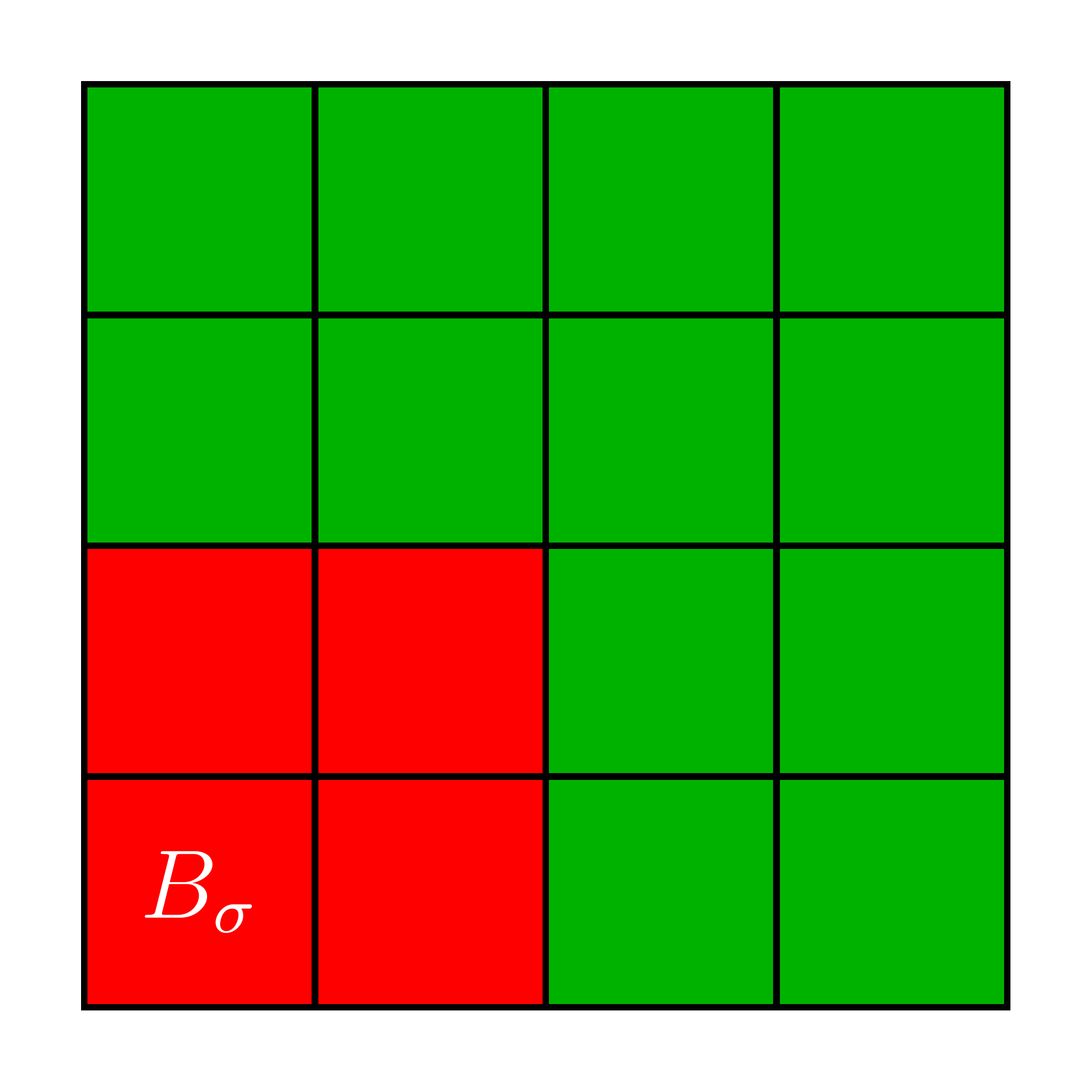}
    \caption{For $d = 2$ and $\eta = \sqrt{d}$, boxes that are admissible with box $B_{\sigma}$,  where $\sigma \in \mc{T}_{I}$, are colored green, while inadmissible boxes are colored red.}
    \label{fig:admis_pic}
\end{figure}

We can now construct the \textit{block cluster tree} $\mc{T}_{I \times I}$ given the cluster tree $\mc{T}_{I}$ by passing $\text{root}(\mc{T}_{I}) \times \text{root}(\mc{T}_{I})$ to Algorithm~\ref{alg:construct_block_cluster_tree}. We  define some sets that are associated with the block cluster tree $\mc{T}_{I \times I}$ as follows:
\begin{enumerate}
    \item \textbf{Far-field block clusters}: $$A_{\mc{T}_{I \times I}} = \{\sigma \times \tau \in L(\mc{T}_{I \times I}): \sigma, \tau  \text{~are admissible} \},$$
    \item \textbf{Near-field block clusters}:   $$D_{\mc{T}_{I \times I}} = \{\sigma \times \tau \in L(\mc{T}_{I \times I}): \sigma, \tau \text{~are not admissible} \}.$$
\end{enumerate}
We call $\sigma \times \tau \in \mc{T}_{I \times I}$ a \emph{near-field block cluster} if 
$\sigma \times \tau \in D_{\mc{T}_{I \times I}}$, and it is called a \emph{far-field block cluster} if 
$\sigma \times \tau \in A_{\mc{T}_{I \times I}}$.
We will refer to $\mc{T}_{I \times I}$ as the block cluster tree constructed by Algorithm~\ref{alg:construct_block_cluster_tree}. The block cluster tree satisfies the following statements due its construction and how $\mc{T}_{I}$ is constructed.
\begin{enumerate}
\item $D_{\mathcal{T}_{I \times I}} \subseteq L(\mathcal{T}_I) \times L(\mathcal{T}_I)$.
\item If $\sigma \times \tau \in A_{\mathcal{T}_{I \times I}}$, then $\operatorname{level}(\sigma) = \operatorname{level}(\tau)$.
\end{enumerate}

\begin{algorithm}[!ht]
\caption{ConstructBlockClusterTree}
\label{alg:construct_block_cluster_tree}
\begin{algorithmic}[1]
\Require Block cluster $\tau \times \sigma$
\If{$\tau$ and $\sigma$ are not admissible and $\text{children}(\tau) \ne \emptyset$ and $\text{children}(\sigma) \ne \emptyset$}
    \State $\text{children}(\tau \times \sigma) = \{\tau' \times \sigma': \tau' \in \text{children}(\tau),~ \sigma' \in \text{children}(\sigma) \}$
    \For{$\tau' \times \sigma' \in \text{children}(\tau \times \sigma)$}
        \State ConstructBlockClusterTree($\tau' \times \sigma'$)
    \EndFor
\Else
    \State $\text{children}(\tau \times \sigma) =\emptyset$
\EndIf
\end{algorithmic}
\end{algorithm}

We now define the \textit{sparsity constant} of a block cluster tree $\mc{T}_{I \times I}$ as
\begin{equation}\label{eqn:csp} C_{\text{sp}} := \max_{\sigma \in \mc{T}_{I} }|\{\tau \in \mc{T}_{I}: \sigma \times \tau \in \mc{T}_{I \times I}  \}|.
\end{equation}
 Since $\eta = \sqrt{d}$ by assumption, we can conclude that $C_{\text{sp}} \le 3^d \cdot 2^{d}$ for $d = 1, 2, 3$ by Lemma 4.4 in \cite{grasedyck2003construction}. Hence,  $\mc{T}_{I \times I}$ is a suitable block cluster tree, which means $C_{\text{sp}}$ does not depend on $n$. Thus, it takes $\mc{O}(n)$ FLOPs to construct $\mc{T}_{I \times I}$ using Algorithm~\ref{alg:construct_block_cluster_tree}. Moreover, an $\mc{H}$-matrix achieves optimal complexity of $\mc{O}(n\log(n))$ in both computational cost and storage, and an $\mc{H}^{2}$-matrix achieves $\mc{O}(n)$ in both. This will be discussed in Section~\ref{ssec:H_mat} and Section~\ref{ssec:H2_mat}, respectively.

\subsection{\texorpdfstring{$\mc{H}$}{}-matrices}\label{ssec:H_mat}
We will now introduce $\mc{H}$-matrices. Let $\bar{\vec{\theta}} \in \Theta$ be a fixed parameter.
Denote $\mathat{K} \in \R^{n \times n}$ as a matrix that approximates the kernel matrix $\mat{K}({X},{X};\bar{\vec{\theta}})$. For a block cluster $b = \sigma \times \tau \in \mc{T}_{I \times I}$, we denote $(\mathat{K})_b \in \R^{n_{\sigma} \times n_{\tau}}$ as a submatrix of $\mathat{K}$, where the rows are selected by $I_{\sigma}$ and the columns are selected by $I_{\tau}$. The matrix $\mathat{K}$ is an $\mc{H}$-matrix of rank $r_0$ if 
$$ ~ \text{rank}((\mathat{K})_b) \le r_0, \qquad \forall b\in A_{\mc{T}_{I \times I}}.$$
Given a block cluster tree, the construction of an $\mc{H}$-matrix is straightforward. We iterate over the block clusters in the tree and perform the following operations. For a near-field block cluster, we set $(\mathat{K})_b = \mat{K}({X}_\sigma,{X}_\tau;\bar{\vec\theta}).$ For a far-field block cluster, we approximate the corresponding submatrix using a low-rank approximation technique. There are several techniques for low-rank approximations, such as SVD~\cite{eckart1936approximation}, rank-revealing QR factorizations~\cite{gu1996strong_rrqr}, and adaptive cross approximation (ACA) methods~\cite{goreinov1997pseudo_skeleton_approximations,bebendorf2011adaptive_cross_multivariate}.

The main advantage of the $\mc{H}$-matrix approach is that it uses $\mc{O}(n\log n )$ storage units rather than $n^2$ storage units. This is achieved because for each far-field block cluster $b \in A_{\mc{T}_{I \times I}}$, there exists a low-rank factorization of the form 
$$(\mathat{K})_{b} = \mat{V}_{b} \mat{Y}^{\top}_{b}.$$
Hence, we store the low-rank factor matrices $\mat{V}_{b}$ and $\mat{Y}_{b}$ rather than the full submatrix $(\mathat{K})_{b}$. Additionally, we can perform MVM with $\mathat{K}$ in $\mc{O}(n\log n)$ FLOPs rather than $\mc{O}(n^2)$ FLOPs using Algorithm~\ref{alg:matrix_vector_mult_H}.

\subsection{\texorpdfstring{$\mc{H}^{2}$}{}-Matrices}\label{ssec:H2_mat}

We now introduce $\mc{H}^{2}$-matrices. Fix a parameter $\bar{\vec{\theta}} \in \Theta$. We will explicitly construct an $\mc{H}^{2}$-matrix $\mathat{K}$ that approximates $\mat{K}({X}, {X}; \bar{\vec{\theta}})$ using polynomial interpolation. The mathematical structures used when constructing this $\mc{H}^{2}$-matrix approximation will come in handy when constructing a parametric $\mc{H}^{2}$-matrix in Section~\ref{sec:param_h_mat}.

 \subsubsection{Transfer Matrices}\label{sssec:Transfer_Matrices}
Let $\sigma \in \mc{T}_{I}$  with $\sigma' \in \text{children}(\sigma)$.  First, for the index sets $I_{\sigma} = \{i_1, i_2,\dots, i_{n_{\sigma}}\}$ and $I_{\sigma'} =\{i_{j_1}, i_{j_2}, \dots, i_{j_{n_{\sigma'}}} \}$, where $i_{j_1} < i_{j_2} < \dots < i_{j_{n_{\sigma'}}}$,  define the row selection matrix $\mat{\Gamma}_{\sigma'} \in \R^{n_{\sigma'} \times n_{\sigma}}$  that  selects the rows $j_1,  j_2, \dots j_{n_{\sigma'}}$ of $\mat{U}_{\sigma}$ in that order. We say that the cluster basis $\{\mat{U}_{\sigma} \}_{\sigma \in \mc{T}_{I}}$ is \emph{nested} if there exists a transfer matrix $\mat{E}_{\sigma'} \in \R^{p_s^{d} \times p_s^{d}}$ such that 
$$\mat{\Gamma}_{\sigma'}\mat{U}_{\sigma} = \mat{U}_{\sigma'}\mat{E}_{\sigma'}.$$
 We will now demonstrate how to construct such a transfer matrix. For integer $1 \le k \le d$, define the factor matrix $\mat{E}_{\sigma', k} \in \R^{p_s \times p_s}$ with  entries
$$ [\mat{E}_{\sigma', k}]_{i, j} = \ell_i^{(B_{\sigma, k})}(\eta^{(B_{\sigma', k})}_{j}),  \quad \text{where}~~ 1 \le i,~ j \le p_s.$$
We now define the transfer matrix $\mat{E}_{\sigma'} \in \R^{p_{s}^d \times p_{s}^d}$ with the formula
$$\mat{E}_{\sigma'} = \mat{E}_{\sigma', d} \otimes \mat{E}_{\sigma', d-1} \cdots \otimes \mat{E}_{\sigma', 1},$$
where the symbol $\otimes$ denotes the Kronecker product from \eqref{form:kron_product}. By Lemma~\ref{lem:transfer}, the cluster basis $\{\mat{U}_{\sigma}\}_{\sigma \in \mc{T}_{I}}$ is nested with transfer matrices $\{\mat{E}_{\sigma} \}_{\sigma \in \mc{T}_{I} - \{\text{root}(\mc{T}_{I})\}}$.
 Note that the transfer matrices are stored implicitly, in terms of their factor matrices. Additionally, in practice, we  need to store only the following subset of the cluster basis: $\{\mat{U}_{\sigma}\}_{\sigma \in L(\mc{T}_{I})}$, since every other cluster basis matrix can be constructed by using the transfer matrices.

\subsubsection{Far-Field Approximations}\label{sssec:Far_Field_Approximations}
Let $b = \sigma \times \tau \in  A_{\mc{T}_{ I \times I}}$ be a far-field block cluster. To approximate the corresponding block from the kernel matrix, we use the spatial approximation of the kernel in~\eqref{eqn:int_of_kappa_2}.

First, define the $2d$ dimensional tensor $\ten{W}_{b}$  with entries
$$[\ten{W}_{b}]_{\imath_1, \imath_2, \dots, \imath_d, \jmath_1, \jmath_2, \dots, \jmath_d} = \kappa(\vec{\eta}_{\vec{\imath}}^{(B_{\sigma})}, \vec{\eta}_{\vec{\jmath}}^{(B_{\tau})}; \bar{\vec{\theta}}), \qquad \vec{\imath}, \vec{\jmath} \in [p_s]^d.$$
Then, define the matrix $\mat{W}_{b} \in \R^{p_{s}^{d} \times p_{s}^{d}}$ with the  formula
$ \mat{W}_{b} = \text{reshape}(\ten{W}_{b}, [p_s^d, p_s^d]).$ This gives the approximation to the kernel matrix by the factorization $$ \mat{K}({X}_{\sigma}, {X}_{\tau}; \bar{\vec{\theta}}) \approx \mat{U}_{\sigma}\mat{W}_{b}\mat{U}_{\tau}^{\top},$$ where the matrices $\mat{U}_\sigma$ and $\mat{V}_\sigma$ are defined in Section~\ref{sssec:Cluster_Basis}. Note that this approximation is a low-rank approximation if $p_{s}^d \ll \min\{n_{\sigma}, n_{\tau}\}$. We refer to the set of  matrices  $\{\mat{W}_{b} \}_{b \in A_{\mc{T}_{I \times I}}}$ as the coupling matrices, since they couple the interactions between cluster basis matrices.

\subsubsection{Construction and Application}\label{sssec:H2_MVM}
 We now have all the components required to construct an $\mc{H}^{2}$-matrix $\mathat{K}$ that approximates the kernel matrix $\mat{K}({X}, {X}; \bar{\vec{\theta}})$. Using the method in Section~\ref{sssec:Cluster_Basis}, we construct the following subset of the cluster basis: $\{\mat{U}_{\sigma}\}_{\sigma \in L(\mc{T}_{I})}$. Next, using the method in Section~\ref{sssec:Transfer_Matrices}, for each $\sigma \in \mc{T}_{I}$ with a parent node, we construct the transfer matrix $\mat{E}_{\sigma}$. Recall that the transfer matrices and the cluster basis are stored implicitly by their respective factor matrices. 
 
 Now, we will explicitly define an $\mc{H}^2$-matrix approximation $\mathat{K}$ to the kernel matrix by iterating over each block cluster $b \in \mc{T}_{I \times I}$. Let $b  =\sigma \times \tau \in \mc{T}_{I \times I}$. If $b \in D_{\mc{T}_{I \times I}}$, then set 
$(\mathat{K})_b = \mat{K}(\mc{X}_{\sigma}, \mc{X}_{\tau}; \bar{\vec{\theta}}).$ If $b \in A_{\mc{T}_{I \times I}}$, then set $(\mathat{K})_b = \mat{U}_{\sigma} \mat{W}_{b}\mat{U}_{\tau}^{\top}.$
With $\mathat{K}$, the MVM operation is performed in three stages: fast-forward, multiplication, and fast-backward. This is formalized in Algorithm~\ref{alg:matrix_vector_mult_H2}. We  note that for both the fast-forward and fast-backward stages, a variation of Algorithm~1 in \cite{fackler2019algorithm} is used to compute the matrix-vector product involving transfer matrices. We refer to this method as \textbf{FastKron}. The method will take as input the factor matrices associated with a transfer matrix and a vector. For $\sigma \in \mc{T}_{I}$ and $\vech{x}_{\sigma} \in \R^{p_s^{d}}$, the important part is that it requires $\mc{O}(p_s^{d+1})$ FLOPs to compute the expression $(\mat{E}_{\sigma, d} \otimes \mat{E}_{\sigma, d-1} \otimes \cdots \otimes \mat{E}_{\sigma, 1})\vech{x}_{\sigma}$ rather than the $\mc{O}(p_s^{2d})$ FLOPs required for the na\"{\i}ve approach.

\subsubsection{Computational and Storage Costs}\label{sec:H2_mat_costs}
For this section, we assume that $l_{\max}$ is chosen such that $C_{\text{leaf}} \approx p_s^d$. Thus, storing the $\mc{H}^{2}$-matrix $\mathat{K}$ requires $\mc{O}(p_s^dn)$ storage units by Lemma~3.38 in \cite{borm2010efficient} and Lemma~\ref{lem:ct_estimates}. Algorithm~\ref{alg:matrix_vector_mult_H2} is similar to Algorithm~8 in \cite{borm2010efficient}. Importantly, the multiplication stages of both algorithms are equivalent, and this stage dominates the computational cost of performing MVM. Consequently, we can perform the MVM operation  using the fact that $\mathat{K}$ is an $\mc{H}^{2}$-matrix in $\mc{O}(n p_s^d)$ FLOPs by Theorem~3.42 in \cite{borm2010efficient}.

\section{Parametric Hierarchical Matrices} \label{sec:param_h_mat}

\subsection{Overview}
 For $\vec{\theta} \in \Theta$, we denote $\mathat{K}(\vec{\theta}) \in \R^{n \times n}$ as the parametric matrix that approximates the parametric kernel matrix $\mat{K}({X}, {X}; \vec{\theta})$. We begin by introducing the definitions of a parametric $\mc{H}$-matrix and a parametric $\mc{H}^{2}$-matrix.
\subsubsection{Definitions}
\begin{definition}[Parametric $\mc{H}$-matrix]\label{def:param_h_matrix}
Let $\vec{\theta} \in \Theta$. The matrix $\mathat{K}(\vec{\theta})$ is a \emph{parametric $\mc{H}$-matrix} if the following conditions hold. For each far-field block cluster $b = \sigma \times \tau \in A_{\mc{T}_{I \times I}}$, there exists a parametric low-rank factorization of the form
   \begin{equation}\label{eqn:param_h_matrix_ff_eqn}
   (\mathat{K}(\vec{\theta}))_b = \mat{S}_{b}\mat{H}_{b}(\vec{\theta})\mat{T}_{b}^{\top},
   \end{equation}
   where $\mat{S}_{b} \in \R^{n_{\sigma} \times s_b}$, $\mat{H}_{b}(\vec{\theta}) \in \R^{s_b \times t_b}$, and $\mat{T}_{b} \in \R^{n_{\tau} \times t_{b}}$. For each near-field block cluster $b = \sigma \times \tau \in D_{\mc{T}_{I \times I}}$, there exists a parametric matrix $\mat{D}_{b}(\vec{\theta}) \in \R^{n_{\sigma} \times n_{\tau}}$ such that
   \begin{equation}\label{eqn:param_h_matrix_nf_eqn}
   (\mathat{K}(\vec{\theta}))_b = \mat{D}_{b}(\vec{\theta}).
   \end{equation}

\end{definition}

\begin{definition}[Parametric $\mc{H}^{2}$-matrix]\label{def:param_h2_matrix}
Let $\vec{\theta} \in \Theta$. The matrix $\mathat{K}(\vec{\theta})$ is a \emph{parametric $\mc{H}^{2}$-matrix}, with respect to the nested cluster basis $\{\mat{U}_{\sigma} \}_{\sigma \in \mc{T}_{I}}$ defined in Section~\ref{sssec:Cluster_Basis}, if the following conditions hold. For each far-field block cluster $b = \sigma \times \tau \in A_{\mc{T}_{I \times I}}$, there exists a parametric low-rank factorization of the form
   \begin{equation}\label{eqn:param_h2_matrix_ff_eqn}
   (\mathat{K}(\vec{\theta}))_b = \mat{U}_{\sigma}\mat{C}_{b}(\vec{\theta})\mat{U}_{\tau}^{\top},
   \end{equation}
   where $\mat{C}_{b}(\vec{\theta}) \in \R^{p_s^d \times p_s^d}$ is a parametric coupling matrix.  For each near-field block cluster $b = \sigma \times \tau \in D_{\mc{T}_{I \times I}}$, there exists a parametric matrix $\mat{D}_{b}(\vec{\theta}) \in \R^{n_{\sigma} \times n_{\tau}}$ such that 
   \begin{equation}\label{eqn:param_h2_matrix_nf_eqn}
          (\mathat{K}(\vec{\theta}))_b = \mat{D}_{b}(\vec{\theta}).
   \end{equation}
\end{definition}

Definition~\ref{def:param_h2_matrix} is similar to Definition~\ref{def:param_h_matrix}; however, for a far-field block cluster $b=\sigma \times \tau \in A_{\mc{T}_{I \times I}}$, the matrices $\mat{U}_{\sigma}$ and $\mat{U}_{\tau}$ in \eqref{eqn:param_h2_matrix_ff_eqn}  depend only on $\sigma$ and $\tau$, respectively. Additionally, Definition~\ref{def:param_h2_matrix} can be made more general; in other words, it is not necessarily dependent on the particular nested cluster basis constructed in Section~\ref{sssec:Cluster_Basis}. For a near-field block cluster $b = \sigma \times \tau \in D_{\mc{T}_{I \times I}}$, the matrix $\mat{D}_{b}(\vec{\theta})$ can be taken to be $\mat{K}(X_{\sigma}, X_{\tau}; \vec{\theta})$, but we will use a different approximation; in particular, the one described in Section~\ref{ssec:nf_bc}. Additionally, for a far-field block cluster $b \in A_{\mc{T}_{I \times I}}$, we will demonstrate how to compute $\mat{S}_{b}, \mat{H}_{b}(\vec{\theta})$, and $\mat{T}_{b}$ in Section~\ref{ssec:ff_bc}. In principle, any parametric low-rank approximation of the form~\eqref{eqn:param_h_matrix_ff_eqn} or~\eqref{eqn:param_h2_matrix_ff_eqn} can be used, but the techniques we will use are based on the PTTK method. Lastly,  
we give a diagram representing a parametric $\mc{H}$-matrix approximation of $\mat{K}(X, X; \vec{\theta})$ in Figure~\ref{fig:param_h_matrix}.

\begin{figure}
    \centering
    \includegraphics[width=.75\linewidth]{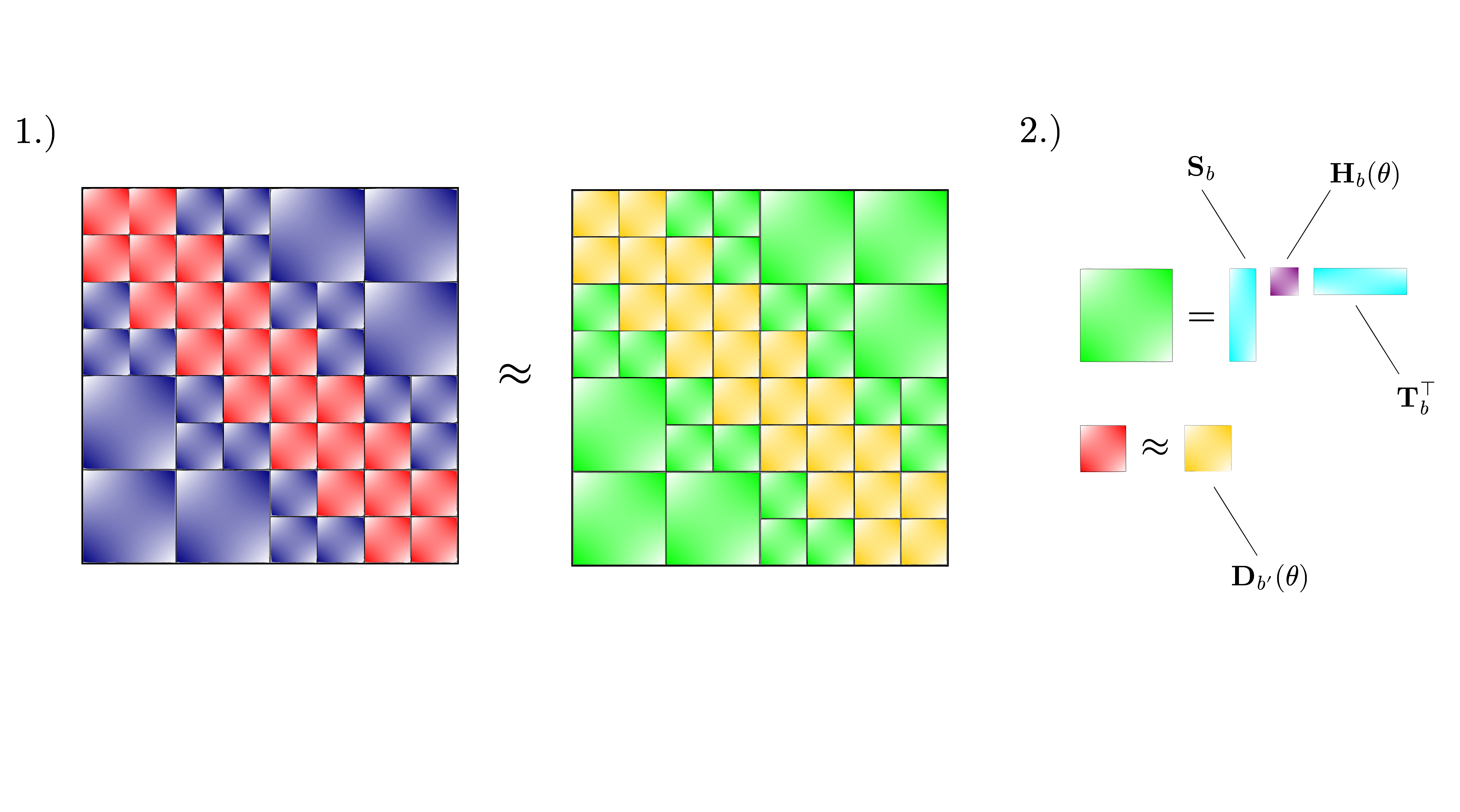}
    \caption{where $\vec{\theta} \in \Theta$, $d= 1$, $l_{\max} = 3$. The diagram illustrates a parametric $\mc{H}$-matrix approximation of $\mat{K}(X, X; \vec{\theta}).$ The yellow blocks are the parametric sub-matrices associated with the near-field block clusters, and the green blocks are the parametric sub-matrices associated with the far-field block clusters. The red blocks and dark blue blocks represent the sub-matrices of the parametric kernel matrix itself for the near-field  and far-field block clusters, respectively.}
    \label{fig:param_h_matrix}
\end{figure}

\subsubsection{Parametric Hierarchical Matrix Method}\label{sssec:param_h_matrix_method_note}
The parametric hierarchical matrix method is split into two stages. In the  offline stage,  we compute the parametric hierarchical matrix $\mathat{K}(\vec{\theta})$ over the parameter space $\Theta$. Then, in the computationally efficient online stage, for a particular parameter $\bar{\vec{\theta}} \in \Theta $, we induce a hierarchical matrix $\mathat{K}(\bar{\vec{\theta}})$ that approximates the kernel matrix $\mat{K}({X}, {X}; \bar{\vec{\theta}})$. The offline/online stage of the parametric hierarchical matrices will be synonymous with the offline/online stage of the parametric hierarchical matrix method.

\subsubsection{Parametric Vectors}
For $\vec{\theta} \in \Theta$, all the methods presented below use polynomial approximations and require the computation of the parametric vectors $\{\vec{v}_{i}(\theta_i)\}_{i=1}^{d_{\theta}}$ defined in~\ref{ssec:PTTK}. Constructing and storing these vectors are independent of the number of points $n$; and since they are formed only once in the offline stage, their cost is not included in our complexity estimates. Hence, we  assume that these vectors have already been computed in the offline stage and are always available for use.

\subsubsection{Outline}
This section will proceed as follows. We first define the mathematical structures needed to construct $\mathat{K}(\vec{\theta})$ such that it is a parametric hierarchical matrix. 
This portion will be split into far-field approximations and near-field approximations; this will be accomplished in Section~\ref{ssec:ff_bc} and Section~\ref{ssec:nf_bc}, respectively.  Next, we will summarize the offline and online stage of parametric $\mc{H}$-matrices and parametric $\mc{H}^{2}$-matrices in Section~\ref{ssec:summary_param_h_mat}. Lastly, for a particular parameter $\bar{\vec{\theta}} \in \Theta$, we will discuss how to perform MVM with $\mathat{K}(\bar{\vec{\theta}})$ whenever $\mathat{K}(\vec{\theta})$ is a parametric $\mc{H}$-matrix or a parametric $\mc{H}^{2}$-matrix in Section~\ref{ssec:h_mat_mvm}.

\subsection{Far-Field Approximations}\label{ssec:ff_bc}
For each far-field block cluster $b \in A_{\mc{T}_{I \times I}}$, we demonstrate how to explicitly construct parametric  approximations of the forms \eqref{eqn:param_h_matrix_ff_eqn} and \eqref{eqn:param_h2_matrix_ff_eqn} using components of the PTTK method first introduced in \cite{khan2025parametric}. The details of this method are reviewed in~\ref{ssec:PTTK}, and here we merely recap the formulas and matrices needed for the proposed parametric  approximations.

\subsubsection{PTTK Approximation} Consider a far-field block cluster $b = \sigma \times \tau \in A_{\mc{T}_{I \times I}}$. The main idea is to use a polynomial approximation of the kernel in the spatial variables $\vec{x}, \vec{y}$ and the parameter variables $\vec{\theta}$, as in~\eqref{eqn:int_of_kappa_1}.  The resulting coefficient tensor $\ten{M}_{b}$ is defined in~\ref{ssec:PTTK}. Since it  is expensive to compute and store, we approximate it using TT-cross, with a user-defined error tolerance $\epsilon_{\mathrm{tol}} > 0$: 
$\tenh{M}_{b} = [\ten{G}_{b,1}, \ten{G}_{b,2}, \dots, \ten{G}_{b,\Delta}]$ with TT-ranks $r_{b,0}, r_{b,1},  \dots, r_{b,\Delta}$. The matrices $\mat{L}_{b} \in \R^{p_s^d \times r_{b,d}}$ and $\mat{R}_{b} \in \R^{p_s^d \times r_{b,d+d_{\theta}}}$  can be defined in terms of the TT-cores $\{\ten{G}_{b, i} \}_{i=1}^{d}$ and $\{\ten{G}_{b, i} \}_{i=d+d_{\theta} + 1}^{\Delta}$, respectively. The matrix $\mat{H}_{b}(\vec{\theta}) \in \R^{r_{b,d} \times r_{b,d+d_{\theta}}}$ is expressed in terms of the TT-cores $\{\ten{G}_{b, i} \}_{i=d + 1}^{d+d_{\theta}}$ and parametric vectors $\{\vec{v}_{i}(\theta_i)\}_{i=1}^{d_{\theta}}$. Exact formulas for these matrices are given in~\ref{ssec:PTTK}. From here, the PTTK method uses the TT-cores $\{\ten{G}_{b,i} \}_{i=1}^{d}$ and $\{\ten{G}_{b, i} \}_{i=d+d_{\theta} + 1}^{\Delta}$ in conjunction with the factor matrices $\{\mat{U}_{\sigma, i}\}_{i=1}^{d}$ and $\{\mat{U}_{\tau, i} \}_{i=1}^{d}$, defined in Section~\ref{sssec:Cluster_Basis}, to efficiently form the matrices  $\mat{S}_{b} \equiv \mat{U}_\sigma\mat{L}_b$ and $\mat{T}_{b} \equiv \mat{U}_\tau\mat{R}_b$. The products $\mat{S}_{b}$ and $\mat{T}_b$ are computed in a special way, using Phase~3 in Algorithm~\ref{alg:offline}. The following parametric low-rank approximation is obtained:
\begin{equation}\label{eqn:ff_h_approx}
\mat{K}({X}_{\sigma}, {X}_{\tau}; {\vec{\theta}}) \approx (\mathat{K}(\vec\theta))_b =  \mat{S}_{b}\mat{H}_{b}(\vec{\theta})\mat{T}_{b}^{\top}.
\end{equation}
We assume  $\kappa$ is sufficiently smooth  on the domain $B_{\sigma} \times B_{\tau} \times B_{\theta}$ so that 
$$\max_{1 \le i \le \Delta}r_{b, i} \ll \min\{n_{\sigma}, n_{\tau} \}.$$

For parametric $\mc{H}$-matrices, \eqref{eqn:ff_h_approx} is used to obtain parametric low-rank approximations for each far-field block cluster. Thus, we  form and store only the matrices $\mat{S}_{b}$ and $\mat{T}_{b}$, and we store the components that define the matrix $\mat{H}_{b}(\vec{\theta})$. During the online stage, we instantiate $\mat{H}_{b}(\bar{\vec{\theta}})$, for a particular $\bar{\vec{\theta}} \in \Theta$, using Algorithm~\ref{alg:online}.

For parametric $\mc{H}^{2}$-matrices,  the  following parametric low-rank approximation is employed:
\begin{equation}\label{eqn:ff_h2_approx}
\mat{K}({X}_{\sigma}, {X}_{\tau}; {\vec{\theta}}) \approx (\mathat{K}(\vec\theta))_b =  \mat{U}_{\sigma} (\mat{L}_b\mat{H}_{b}(\vec{\theta}) \mat{R}_b) \mat{U}_{\tau}^{\top}.
\end{equation}
The parametric coupling matrix $\mat{C}_{b}(\vec{\theta})$ takes the form $\mat{C}_{b}(\vec{\theta}) \equiv \mat{L}_{b}\mat{H}_b(\vec{\theta})\mat{R}_{b}^{\top}.$  By definition of $\mat{C}_{b}(\vec{\theta})$,
$\mat{U}_\sigma \mat{C}_{b}(\vec\theta)\mat{U}_\tau\t = \mat{S}_b \mat{H}_b(\vec\theta)\mat{T}_b\t$. During the offline stage, we store the matrix implicitly in terms of the TT-cores $\{\ten{G}_{b, i} \}_{i=1}^{\Delta}$; hence, for the matrix $\mat{C}_{b}(\vec{\theta})$, we never form the factors $\mat{L}_{b}$ and $\mat{R}_{b}$ explicitly to take advantage of the compression offered by the TT-format. Then, during the online stage, we form the matrix $\mat{H}_{b}(\bar{\vec{\theta}})$, for a particular $\bar{\vec{\theta}} \in \Theta$, using Algorithm~\ref{alg:online}, and we store the matrices $\mat{L}_{b}$ and $\mat{R}_{b}$ implicitly in terms of the required TT-cores.

In summary, for parametric $\mc{H}$-matrices, during the offline stage,  Algorithm~\ref{alg:offline} is used, and during the online stage,  Algorithm~\ref{alg:online} is used. For parametric $\mc{H}^{2}$-matrices, during the offline stage, only Phase~2 of Algorithm~\ref{alg:offline} is used, and during the online stage,  Algorithm~\ref{alg:online} is used.

\subsubsection{Computational Costs and Storage Costs}
In this section, we discuss the computational costs and storage costs associated with the operations in Section~\ref{ssec:ff_bc} for the offline and online stages of parametric $\mc{H}$-matrices and parametric $\mc{H}^{2}$-matrices. Let $b = \sigma \times \tau \in A_{\mc{T}_{I \times I}}$ be a far-field block cluster. For both parametric $\mc{H}$-matrices and parametric $\mc{H}^{2}$-matrices, the number of kernel evaluations is the same for the offline and online stages; additionally, the online stages of both  are identical. Thus, define   $\Ker{ff}{offline}(b)$ and $\Ker{ff}{online}(b)$  as the number of kernel evaluations required with respect to $b$ during the offline and online stages, respectively.  Define  $\Tcost{ff}{online}(b)$ as the computational cost (in FLOPs) of the operations associated with $b$ during the online stage. For parametric $\mc{H}$-matrices, we denote  $\Tcost[\mc{H}]{ff}{offline}(b)$  as the computational cost (in FLOPs) of the operations associated with $b$ during the offline stage; similarly, for parametric $\mc{H}^{2}$-matrices, we denote the symbol as $\Tcost[\mc{H}^{2}]{ff}{offline}(b)$. Define $r_{\text{ff}} = \max_{b \in A_{\mc{T}_{I \times I}}} (\underset{1 \le i \le \Delta}{\max}r_{b,i})$ as the global far-field rank. All the analysis performed in this section will be used to obtain the results in Table~\ref{tab:PH_cost_summary} and Table~\ref{tab:PH2_cost_summary}.

\paragraph{Offline Stage}
 For both parametric $\mc{H}$-matrices and parametric $\mc{H}^{2}$-matrices,  when performing Phase~2 of the offline stage in Algorithm~\ref{alg:offline},  the number of kernel evaluations is $\mc{O}(\Delta pr^2)$. Thus, 
 \begin{equation}\label{keqn:ff_offline}
 \Ker{ff}{offline}(b) = \mc{O}(\Delta pr^2).
 \end{equation}
We begin with the computational cost relating to parametric $\mc{H}$-matrices.  For the offline stage,  the matrices $\mat{S}_{b}$ and $\mat{T}_{b}$ and the components of the matrix $\mat{H}_{b}(\vec{\theta})$ are obtained by using Algorithm~\ref{alg:offline}. 
In~\ref{ssec:PTTK}, we demonstrate that this  algorithm requires   $\mc{O}(dp_s^2 +  \Delta pr_{\text{ff}}^3 + dp_s(n_{\sigma} + n_{\tau})r_{\text{ff}}^2)$ FLOPs. Therefore,
\begin{equation}\label{ceqn:H_ff_offline} \Tcost[\mc{H}]{ff}{offline}(b) = \mc{O}(dp_s^2 + \Delta pr_{\text{ff}}^3 + dp_s(n_{\sigma} + n_{\tau})r_{\text{ff}}^2)\> \text{FLOPs}.
\end{equation}
We simply need to store the matrices $\mat{S}_{b}, \mat{T}_{b}$ and the TT-cores $\{\ten{G}_{b, i} \}_{i= d + 1}^{d+d_{\theta}}$, which requires $\mc{O}((n_{\sigma} + n_{\tau})r_{\text{ff}} ~+ ~ d_{\theta}p_{\theta}r_{\text{ff}}^2)$ storage units.

Next, we consider parametric $\mc{H}^{2}$-matrices. During the offline stage, we simply need to compute the TT-approximation of the tensor $\ten{M}_{b}$, which requires $\mc{O}(\Delta pr_{\text{ff}}^3)$ FLOPs. Therefore,
\begin{equation}\label{ceqn:H2_ff_offline}
 \Tcost[\mc{H}^{2}]{ff}{offline}(b) = \mc{O}(\Delta pr_{\text{ff}}^3 ).
\end{equation}
Now, we simply need to store the TT-cores $\{\ten{G}_{b, i} \}_{i= 1}^{\Delta}$, which requires $\mc{O}(\Delta pr_{\text{ff}}^2)$ storage units.

\paragraph{Online Stage}
For a particular parameter $\bar{\vec{\theta}}$, we  use Algorithm~\ref{alg:online}. Therefore, for both parametric $\mc{H}$-matrices and $\mc{H}^{2}$-matrices,
\begin{equation}\label{ceqn:ff_online}
 \Tcost{ff}{online}(b) = \mc{O}(d_{\theta}(p_{\theta}r_{\text{ff}}^2 + r_{\text{ff}}^3)).
 \end{equation}
 The number of kernel evaluations required is zero; hence, 
 $$\text{ker}_{\text{ff}, \text{offline}}(b) = 0.$$

\subsection{Near-Field Approximations}\label{ssec:nf_bc}
In this section, the following method is used to construct both parametric $\mc{H}$-matrices and parametric $\mc{H}^{2}$-matrices; hence, we do not distinguish between them in this section.
In particular, we demonstrate how to explicitly construct parametric approximations of the forms \eqref{eqn:param_h_matrix_nf_eqn} and \eqref{eqn:param_h2_matrix_nf_eqn}. Consider a near-field block cluster $b =\sigma \times \tau \in D_{\mc{T}_{I \times I}}$. For $\vec{\theta} \in \Theta$, we show how to obtain a parametric compressed approximation of the submatrix $\mat{K}({X}_{\sigma}, {X}_{\tau};\vec{\theta}) \in \R^{n_{\sigma} \times n_{\tau}}$.
For a fixed parameter $\bar{\vec{\theta}} \in \Theta$, the submatrix does not admit a low-rank approximation with sufficiently low ranks because it is induced by the near-field block cluster $b$. 
Even so, we can still obtain a parametric compressed approximation using the following method, which is a new variant of the PTTK method. First, we motivate the use of this new variant. In  Section~\ref{ssec:ff_bc},  the interpolant $\phi^{(b)}$ is used, where we interpolate with respect to all coordinates of $\kappa$. Since $b$ is a near-field block cluster, however, $\kappa$ may not be smooth enough with respect to its spatial variables for the use of $\phi^{(b)}$ to be applicable.  Specifically, the tensor $\ten{M}_{b}$ may not admit a TT-approximation with small TT-ranks. Thus, we use the interpolant $\psi$, defined in Section~\ref{ssec:polynomial_int},  to obtain a parametric approximation of $\mat{K}({X}_{\sigma}, {X}_{\tau};\vec{\theta}) \in \R^{n_{\sigma} \times n_{\tau}}$, taking advantage of the smoothness of the kernel in the parameter space.

Let ${X}_{\sigma} = (\vec{x}_{\sigma, i} )_{i=1}^{n_{\sigma}}$ and ${X}_{\tau} = ( \vec{x}_{\tau, i})_{i=1}^{n_{\tau}}$. We interpolate the kernel in the parameter variables using the interpolation formula~\eqref{eqn:int_of_kappa_3}. First, define the $d_{\theta} + 1$ dimensional tensor $\ten{A}_{b}$ with entries
$$[\ten{A}_{b}]_{\overline{i j}, \imath_1, \imath_2, \dots, \imath_{d_\theta}} = \kappa(\vec{x}_{\sigma, i}, \vec{x}_{\tau, j}; \vec{\eta}^{(B_{\theta})}_{\vec{\imath}}), \qquad 1 \le i \le n_{\sigma}, 1 \le j \le n_{\tau}, \vec{\imath} \in [p_{\theta}]^{d_\theta}.$$
Recall that for indices $i_1, i_2, \dots, i_k \in \mathbb{N}$, the index $\overline{i_1i_2\cdots i_k} \in \mathbb{N}$ is defined in Section~\ref{ssec:background_tensors}.
Next, for $\vec{\theta} \in \Theta$, we define the parametric vector $\vec{a}_{b}(\vec{\theta}) \in \R^{n_{\sigma}n_{\tau}}$ with the formula
$$\vec{a}_b(\vec{\theta}) = \ten{A}_{b}  \times_{2} \vec{v}_{1}(\theta_1) \times_{3} \vec{v}_{2}(\theta_{2}) \times_{4} \cdots \times_{d_\theta + 1} \vec{v}_{d_\theta}(\theta_{d_\theta}),$$
where the parametric vectors $\{\vec{v}_{i}(\theta_i)\}_{i=1}^{d_{\theta}}$ are defined in~\ref{ssec:PTTK}.
Observe that the entries of the parametric kernel matrix can be approximated as follows:
$$[\mat{K}({X}_{\sigma}, {X}_{\tau};\vec{\theta})]_{i, j} \approx \psi(\vec{x}_{\sigma, i}, \vec{x}_{\tau, j}; \vec{\theta}) = [\vec{a}_{b}(\vec{\theta})]_{\overline{ij}}, \quad 1 \le i \le n_{\sigma}, 1 \le j \le n_{\tau}.$$
We obtain the following parametric approximation:
\begin{equation}\label{eqn:fin_nf_approx_1}
    \mat{K}({X}_{\sigma}, {X}_{\tau};\vec{\theta}) \approx \text{reshape}(\vec{a}_b(\vec{\theta}), [n_{\sigma}, n_{\tau}]).
\end{equation}

Storing and forming $\ten{A}_{b}$ require $n_{\sigma}n_{\tau}p_{\theta}^{d_{\theta}+1}$ storage units and  $\mathcal{O}(n_{\sigma}n_{\tau}p_{\theta}^{d_{\theta}+1})$ FLOPs, respectively.
To reduce these computational and storage costs, we use TT-cross to approximate $\ten{A}_{b}$ in TT-format; for more information on TT-cross, see Section~\ref{ssec:background_tensors}.  We apply TT-cross, with some error tolerance $\epsilon_{\text{tol}} > 0$ to the tensor $\ten{A}_{b}$:
$$\tenh{A}_{b} = [\ten{G}_{b, 1}, \ten{G}_{b, 2}, \dots, \ten{G}_{b, d_\theta + 1}],$$
with TT-ranks $r_{b,0}, r_{b,1}, r_{b,2} \dots, r_{b,d_\theta + 1}$. We can now approximate $\vec{a}_b(\vec{\theta})$ in terms of the TT-cores of $\bar{\ten{A}}_b$,
\[
\vech{a}_b(\vec{\theta})
    \;=\;
    \mathrm{reshape}\!\left(\ten{G}_{b,1},\,[n_\sigma  \cdot n_\tau, r_{b, 1}]\right)
    \;\bigtimes\;
    \biggl(
        \prod_{i=1}^{d_\theta}
        \bigl( \ten{G}_{b,i+1} \times_{2} \vec{v}_{i}(\theta_i) \bigr)
    \biggr).
\]
We  substitute $\vech{a}_b(\vec{\theta})$ into \eqref{eqn:fin_nf_approx_1} and obtain the following parametric compressed approximation:
\begin{equation}\label{eqn:fin_nf_approx_2}
    \mat{K}({X}_{\sigma}, {X}_{\tau};\vec{\theta}) \approx \text{reshape}(\vech{a}_b(\vec{\theta}), [n_{\sigma}, n_{\tau}]).
\end{equation}
Consequently, the matrix $D_{b}(\vec{\theta})$ in Definition~\ref{def:param_h_matrix} and Definition~\ref{def:param_h2_matrix} takes the form $D_{b}(\vec{\theta}) = \text{reshape}(\vech{a}_b(\vec{\theta}), [n_{\sigma}, n_{\tau}])$.
For a particular parameter $\bar{\vec{\theta}} \in \Theta$, it is more efficient to evaluate \eqref{eqn:fin_nf_approx_2} rather than \eqref{eqn:fin_nf_approx_1}. Additionally, evaluating \eqref{eqn:fin_nf_approx_2} requires  storing only the TT-cores $\{ \ten{G}_{b,i}\}_{i=1}^{d_{\theta} + 1}$, assuming that the  parametric vectors $\{\vec{v}_{i}(\theta_i) \}_{i=1}^{d_\theta}$ are already stored.

For a particular parameter $\bar{\vec{\theta}} \in \Theta$ Evaluating  \eqref{eqn:fin_nf_approx_2} requires no new kernel evaluations, whereas  na\"{\i}vely forming $\mat{K}({X}_{\sigma}, {X}_{\tau}; \bar{\vec{\vec{\theta}}})$ requires $n_{\sigma}n_{\tau}$ kernel evaluation. In terms of FLOP count, however, the na\"{\i}ve approach is cheaper than evaluating \eqref{eqn:fin_nf_approx_2}; thus, any speedup when compared with na\"{\i}vely forming the kernel matrix is due to reducing the number of kernel evaluations to zero. This can be computationally beneficial for kernels that are expensive to evaluate, such as the Mat\'ern  kernel; the computational benefit can be observed in Section~\ref{sec:num_experiments}.

\subsubsection{Computational Costs and Storage Costs}
In this section, we discuss the computational costs and storage costs associated with the operations in Section~\ref{ssec:nf_bc} for the offline and online stages of parametric $\mc{H}$-matrices and parametric $\mc{H}^{2}$-matrices.
Let $b = \sigma \times \tau \in D_{\mc{T}_{I \times I}}$ be a near-field block cluster. Define the symbols $\Tcost{nf}{offline}(b)$ and $\Tcost{nf}{online}(b)$ as the computational cost (in FLOPs) of the operations associated with $b$ during the offline and online stages, respectively. Similarly, define the symbols $\Ker{nf}{offline}(b)$ and $\Ker{nf}{online}(b)$ as the number of kernel evaluations associated with $b$ during the offline and online stages, respectively. Define 
$r_{\text{nf}} = \max_{b \in D_{\mc{T}_{I \times I}}} (\underset{1 \le i \le \Delta}{\max}r_{b,i})$ as the global near-field rank. All the analysis performed in this section will be used to obtain the results in Table~\ref{tab:PH_cost_summary} and Table~\ref{tab:PH2_cost_summary}.

\paragraph{Offline Stage}
The  FLOPs and number of kernel evaluations required to obtain a TT-approximation of $\ten{A}_{b}$ are
$$  \mc{O}(n_{\sigma}n_{\tau} r_{\text{nf}}^2 + d_{\theta}p_{\theta}r_{\text{nf}}^{3}), ~ ~ \mc{O}(n_{\sigma}n_{\tau} r_{\text{nf}} + d_{\theta}p_{\theta}r_{\text{nf}}^{2}),$$
respectively; recall that the complexity of TT-cross is analyzed in Section~\ref{ssec:background_tensors}. Since $(n_{\sigma}n_{\tau}) \le C_{\text{leaf}}^{2}$, we can conclude that
\begin{align}
    \Ker{nf}{offline}(b) &=  ~   \mc{O}( C_{\text{leaf}}^2 r_{\text{nf}} + d_{\theta}p_{\theta}r_{\text{nf}}^{2}), \label{keqn:nf_offline} \\ 
        \Tcost{nf}{offline}(b) &= ~  \mc{O}(C_{\text{leaf}}^{2}r_{\text{nf}}^2 +   d_{\theta}p_{\theta}r_{\text{nf}}^{3}). \label{ceqn:nf_offline}
\end{align}
For the near-field block cluster $b$, we simply need to store the TT-cores $\{\ten{G}_{b, i} \}_{i=1}^{d_{\theta} + 1}$, which  requires $\mc{O}(d_{\theta}p_{\theta}r_{\text{nf}}^2)$ storage units.
\paragraph{Online Stage}
Fix a particular parameter $\bar{\vec{\theta}} \in \Theta$. During the online stage, instantiating the vector $\vech{a}_{b}(\bar{\vec{\theta}})$ requires $\mc{O}(d_{\theta}p_{\theta}r_{\text{nf}}^2 + n_{\sigma}n_{\tau}r_{\text{nf}})$ FLOPs and zero kernel evaluations. This implies that  
\begin{align}
         \Tcost{nf}{online}(b) &= \mc{O}(d_{\theta}p_{\theta}r_{\text{nf}}^2 + C_{\text{leaf}}^{2}r_{\text{nf}} ),\label{ceqn:nf_online} \\ 
     \Ker{nf}{online}(b) &= 0. 
\end{align}

\subsection{Summary of Parametric \texorpdfstring{$\mc{H}$}{H}-Matrices and  \texorpdfstring{$\mc{H}^{2}$}{H2}-Matrices}\label{ssec:summary_param_h_mat}
We now summarize the offline and online stages of the parametric hierarchical matrices; for more information on the stages, see ~\ref{sssec:param_h_matrix_method_note}. The offline stage for parametric $\mc{H}$-matrices is formalized in Algorithm~\ref{alg:param_hmatrix_offline}, and for parametric $\mc{H}^{2}$-matrices it is formalized in Algorithm~\ref{alg:param_h2matrix_offline}. The online stage is the same for both parametric $\mc{H}$-matrices and parametric $\mc{H}^{2}$, and it is formalized in Algorithm~\ref{alg:param_h_and_h2_online}.

\begin{algorithm}[!ht]
\caption{Offline Stage: Parametric $\mc{H}$-matrix}\label{alg:param_hmatrix_offline}
\begin{algorithmic}[1]
\Require Point set $X$, parameter domain $\Theta$, tolerance $\epsilon_{\mathrm{tol}} > 0$
\Ensure Parametric  $\mc{H}$-matrix $\mathat{K}(\vec{\theta}), \quad \vec{\theta} \in \Theta$

\State Construct the Cluster Tree $\mc{T}_I$ and Block Cluster Tree $\mc{T}_{I \times I}$
\For{each block cluster $b = \sigma \times \tau \in \mc{T}_{I \times I}$}
    \If{$b$ is near-field}
        \State Store data required for near-field approximation (see Section~\ref{ssec:nf_bc})
    \Else
    \State Construct matrices $\mat{S}_{b}, \mat{T}_{b}$
    and the components of $\mat{H}_{b}(\vec{\theta})$ using Algorithm~\ref{alg:offline} with parameter $\epsilon_{\text{tol}}$.
    \EndIf
\EndFor
\State 
\Return $\mathat{K}(\vec{\theta})$
\end{algorithmic}
\end{algorithm}

\begin{algorithm}[!ht]
\caption{Offline Stage: Parametric $\mc{H}^{2}$-matrix}\label{alg:param_h2matrix_offline}
\begin{algorithmic}[1]
\Require Point set $X$, parameter domain $\Theta$, tolerance $\epsilon_{\mathrm{tol}} > 0$
\Ensure Parametric  $\mc{H}^{2}$-matrix $\mathat{K}(\vec{\theta}), \quad \vec{\theta} \in \Theta$
\State Construct the Cluster Tree $\mc{T}_I$ and Block Cluster Tree $\mc{T}_{I \times I}$
\For{$\sigma \in L(\mc{T}_{I})$}
\State Form the factor matrices $\{\mat{U}_{\sigma, i} \}_{i=1}^{d}$ using the method in Section~\ref{sssec:Cluster_Basis}.
\EndFor
\For{$\sigma \in \mc{T}_{I}$} 
\If{$\sigma$ has a parent $\sigma'$}
\State Form the factor matrices $\{\mat{E}_{\sigma, i} \}_{i=1}^{2d}$ (as in Section~\ref{sssec:Transfer_Matrices})
\EndIf
\EndFor
\For{each block cluster $b = \sigma \times \tau \in \mc{T}_{I \times I}$}
    \If{$b$ is near-field}
        \State Store data required for near-field approximation (see Section~\ref{ssec:nf_bc})
    \Else
    \State Compute the TT-approximation of $\ten{M}_{b}$, 
    $\tenh{M}_{b} = [\ten{G}_{b, 1}, \ten{G}_{b,2}, \dots, \ten{G}_{b, \Delta}], $
    using TT-cross with parameter $\epsilon_{\text{tol}}$.
    \EndIf
\EndFor
\State 
\Return $\mathat{K}(\vec{\theta})$
\end{algorithmic}
\end{algorithm}

\begin{algorithm}[!ht]
\caption{Online Stage: Parametric $\mc{H}$-matrix and Parametric $\mc{H}^{2}$-matrix}\label{alg:param_h_and_h2_online}
\begin{algorithmic}[1]
\Require Parameter $\bar{\vec{\theta}} \in \Theta$, parametric hierarchical matrix  $\mat{K}(\vec{\theta})$, where $\vec{\theta} \in \Theta$
\Ensure Instantiated hierarchical matrix $\mathat{K}(\bar{\vec{\theta}})$ approximating $\mat{K}(X, X; \bar{\vec{\theta}})$

\For{each block cluster $b = \sigma \times \tau \in \mc{T}_{I \times I}$}
    \If{$b$ is near-field}
        \State Instantiate 
     $
        (\mathat{K}(\bar{\vec{\theta}}))_b 
        = \text{reshape}\!\left(\vech{a}_b(\bar{\vec{\theta}}), [n_\sigma, n_\tau]\right)
        $ \hspace{0.25cm}(see Section~\ref{ssec:nf_bc})
    \Else
        \State Instantiate $\mat{H}_b(\bar{\vec{\theta}})$ using Algorithm~\ref{alg:online}
    \EndIf
\EndFor
\State \Return $\mathat{K}(\bar{\vec{\theta}})$
\end{algorithmic}
\end{algorithm}

\subsection{MVM} \label{ssec:h_mat_mvm}
Fix a parameter $\bar{\vec{\theta}} \in \Theta$. We have demonstrated that we can induce a hierarchical matrix $\mathat{K}(\bar{\vec{\theta}})$ that approximates  $\mat{K}({X}, {X}; \bar{\vec{\theta}})$. In this section, we will address how to perform MVM with $\mathat{K}(\bar{\vec{\theta}})$.

\subsubsection{Parametric \texorpdfstring{$\mc{H}$}{H}-Matrices}
Assume $\mathat{K}(\vec{\theta})$ is a parametric $\mc{H}$-matrix. The algorithm to perform MVM with $\mathat{K}(\vec{\bar{\vec{\theta}}})$ is almost identical to the standard MVM algorithm (Algorithm~\ref{alg:matrix_vector_mult_H}). The only modification is Line 3 where,  for $b = \sigma \times \tau \in A_{\mc{T}_{I \times I}}$, we substitute with 
$$y_{|\sigma}  = y_{|\sigma} + \mat{S}_{b}(\mat{H}_b(\bar{\vec{\theta}})(\mat{T}_{b}^{\top}\vec{x}_{|\tau})).$$
\subsubsection{Parametric \texorpdfstring{$\mc{H}^{2}$}{H2}-Matrices}
Assume $\mathat{K}(\vec{\theta})$ is a parametric $\mc{H}^{2}$-matrix. There are some slight subtleties when performing MVM with $\mathat{K}(\bar{\vec{\theta}})$  because, for each far-field block cluster $b \in A_{\mc{T}_{I \times I}}$, we store the factors $\mat{L}_{b}$ and $\mat{R}_{b}$ that defines $\mat{C}_{b}(\bar{\vec{\theta}})$ implicitly. We state the formulas, from ~\ref{ssec:PTTK}, that define matrices $\mat{L}_{b}$ and $\mat{R}_{b}$:
\[
\mat{L}_b \;=\;
\prod_{i=1}^{d-1}
\bigl( \mat{I}_{p_s^{\,d-i}} \otimes \mat{G}_{b,i}^{\{2\}} \bigr) \,
\mat{G}_{b,d}^{\{2\}}, 
\qquad
\mat{R}_{b}^{\top} \;=\;
\mat{G}_{b,d+d_\theta+1}^{\{1\}} \,
\prod_{i=1}^{d-1}
\bigl( \mat{G}_{b,d+d_\theta+1+i}^{\{1\}} \otimes \mat{I}_{p_s^{\,i}} \bigr).
\]
Recall that the coupling matrix is defined as $\mat{C}_{b}(\bar{\vec{\theta}}) = \mat{L}_{b}\mat{H}_{b}(\bar{\vec{\theta}})\mat{R}_{b}^{\top}$.

We now demonstrate how to perform MVM with components of the coupling matrix being stored implicitly. We use Algorithm~\ref{alg:backward} and Algorithm~\ref{alg:forward} for the fast-backward and fast-forward stages, respectively. For the multiplication stage, however, we use a different method. The matrix-vector multiplication algorithm for $\mathat{K}(\bar{\vec{\theta}})$ is formalized in Algorithm~\ref{alg:ttmv}. Let $\vech{x} \in \R^{p_s^{d}}$ and $k = d_{\theta} + 1$. We will refer to \eqref{form:veckron} as the vec-kron identity. The correctness of the multiplication stage of Algorithm~\ref{alg:ttmv} can be proved by using induction with repeated application of the vec-kron identity. We will now prove the base case for $d = 2$. Assuming $d = 2$, we compute
\[
\mat{C}_{b}(\bar{\vec{\theta}})\vech{x} 
= \mat{L}_{b}\mat{H}_{b}(\bar{\vec{\theta}})\mat{R}_{b}\vech{x} 
= (\mat{I} \otimes \mat{G}_{b, 1}^{\{2\}})\mat{G}_{b,2}^{\{2\}}\mat{H}_{b}(\bar{\vec{\theta}})
\mat{G}_{b, k + 1}^{\{1\}}(\mat{G}_{b, k +2}^{\{1\}} \otimes \mat{I})\vech{x}.
\]
We can efficiently compute $(\mat{G}_{b, k +2}^{\{1\}} \otimes \mat{I})\vech{x}$ using the vec-kron identity and obtain
\[
\vech{x}_1 = \text{vec}\!\left(\text{reshape}(\vech{x}, [p_s, p_s])(\mat{G}_{b, k+2}^{\{1\}})^{\top}\right).
\]
Observe that $\vech{x}_1 \in \R^{p_s r_{b, k+1}}$. Now, we can compute the expression
\[
\vech{x}_{2} = \mat{G}_{b,2}^{\{2\}}\mat{H}_{b}(\bar{\vec{\theta}})\mat{G}_{b, k + 1}^{\{1\}}\vech{x}_1
\]
and observe that $\vech{x}_{2} \in \R^{r_{b, 1}p_s}$. We again apply the vec-kron identity and efficiently compute
\[
\mat{C}_{b}(\bar{\vec{\theta}})\vech{x} 
= (\mat{I} \otimes \mat{G}_{b, 1}^{\{2\}})\vech{x}_{2} 
= \text{vec}\!\left(\mat{G}_{b,1}^{\{2\}}\text{reshape}(\vech{x}_2, [r_{b,1}, p_s])\right).
\]
Alternatively, we can  efficiently form the matrices $\mat{L}_{b}$ and $\mat{R}_{b}$ explicitly by using tensor algebra properties relating to Kronecker products.  

% Multiplication stage of modified H^{2}-matrix algorithm taken verbatim from python.
%         back_cores = cores[:self.dim]
%         front_cores = cores[self.dim + self.param_dim:]
%
%         for c in front_cores[::-1]:
%            c = self.flatten_last_two_dims(c)
%            a, b = np.shape(c)
%            vec = vec.reshape([int(np.size(vec) / b), b], order='F') @ c.T
%         vec = H @ vec.T
%         for c in back_cores[::-1]:
%            c = self.flatten_first_two_dims(c)
%            a, b = np.shape(c)
%            vec = c @ vec.reshape([b, int(np.size(vec) / b)], order='F')
%

\begin{algorithm}[!ht]
\caption{Modified $\mc{H}^{2}$-Matrix MVM}
\label{alg:ttmv}
\begin{algorithmic}[1]
\Require Vector $\vec{x} \in \R^{n}$ and fixed parameter $\bar{\vec{\theta}} \in \Theta$
\Ensure $\vec{y} = \mathat{K}(\bar{\vec{\theta}})\vec{x}$
\State $\vec{y} \gets \vec{0}$
\State $\vech{x} \gets \vec{0}$
\State \textsc{FastForward}$(\mathrm{root}(\mc{T}_{I}), \vec{x}, \hat{\vec{x}})$ 
\Comment{Defined in Algorithm~\ref{alg:forward}}
\For{$\sigma \in \mc{T}_{I}$}
  \State $\vech{y}_{\sigma} \gets \vec{0}$
\EndFor
  \LineComment{Begin Multiplication Stage}
\ForAll{$\sigma \times \tau \in A_{\mc{T}_{I \times I}}$}
  \State $\vec{z} \gets \vech{x}_{\tau}$
  \State $b \gets \sigma \times \tau$
  \For{$0 \le i \le d-1$}
  \State $\vec{z} \gets \text{reshape}(\vec{z}, [p_s^{d-(i+1)},~ r_{\Delta-i} \cdot p_s ] )(\mat{G}^{\{1\}}_{b, \Delta-i})^{\top} $
  \EndFor
  \State $\vec{z} \gets \mat{H}(\vec{\theta})\text{reshape}(\vec{z}, [r_{d+d_{\theta}}, 1 ])$
  \For{$0 \le i \le d-1$}
  \State $\vec{z} \gets \mat{G}_{d-i}^{\{2\}} \text{reshape}(\vec{z},[r_{d-i},~p_s^{i}  ] $
  \EndFor
  \State $\vech{y}_{\sigma} \gets \vech{y}_{\sigma} + \text{reshape}(\vec{z}, [p_s^d, 1])$
\EndFor
\ForAll{$\sigma \times \tau \in D_{\mc{T}_{I \times I}}$}
  \State $\vec{y}_{|I_\sigma} \gets \vec{y}_{|I_\sigma} +
         \bigl(\mathat{K}(\bar{\vec{\theta}})\bigr)_{\sigma \times \tau}\vec{x}_{|I_\tau}$
\EndFor
  \LineComment{End Multiplication Stage}
\State \textsc{FastBackward}$(\mathrm{root}(\mc{T}_{I}), \hat{\vec{y}}, \vec{y})$
  \Comment{Defined in Algorithm~\ref{alg:backward}}
\end{algorithmic}
\end{algorithm}

\section{Computational and Storage Cost Analysis}\label{sec:comp_cost_and_storage}

\subsection{Introduction}\label{ssec:comp_cost_and_storage_intro}
In this section we will go over the computational costs and storage costs associated with  parametric $\mc{H}$-matrices and parametric $\mc{H}^{2}$-matrices that are constructed using the methods in Section~\ref{sec:param_h_mat}. For ease of presentation, we introduce (or sometimes recap) the following notation:
\[
\begin{array}{lll}
r_{\text{ff}} = \max_{b \in A_{\mc{T}_{I \times I}}} (\underset{1 \le i \le \Delta}{\max}r_{b,i}), & 
r_{\text{nf}} = \max_{b \in D_{\mc{T}_{I \times I}}} (\underset{1 \le i \le \Delta}{\max}r_{b,i}), & 
r := \max\{r_{\text{ff}}, r_{\text{nf}}\}, \\[4pt]
N_{\text{ff}} := \sum_{\sigma \times \tau \in A_{\mc{T}_{I \times I}}} 1, & 
N_{\text{nf}} := \sum_{\sigma \times \tau \in D_{\mc{T}_{I \times I}}} 1, &  p = \max\{p_{\theta}, p_{s}\}.\\[4pt]
\end{array}
\]

The values $N_{\text{ff}}$ and $N_{\text{nf}}$ denote the number of far-field and near-field block clusters, respectively. In practice, $C_{\text{leaf}}$ is chosen to be proportional to the values $r$ and $p$; for simplicity, we  will assume that $C_{\text{leaf}} \ge \max\{r, p\}$. Note that differing choices of $C_{\text{leaf}}$ will lead to different complexity estimates. Lastly, for ease of presentation, we fix $d = 3$; this is the value of $d$ that we take in Section~\ref{sec:num_experiments}.

We define the near-field component as the set of matrices and tensors associated with near-field block clusters and the far-field component as the set of matrices and tensors associated with the far-field block clusters. Additionally, the cluster basis and transfer matrices are  included in the far-field component, if applicable.  

\subsection{Translation Invariance}\label{ssec:translation_invariance}
  The kernel function $\kappa$ is \emph{translation-invariant} if for any $\vec{c} \in \mathbb{R}^{d}$ and $\bar{\vec{\theta}} \in \Theta$ 
\begin{equation*}
    \kappa(\vec{x} + \vec{c}, \vec{y} + \vec{c}; \bar{\vec{\theta}}) 
    = \kappa(\vec{x}, \vec{y}; \bar{\vec{\theta}}), 
    \quad \vec{x}, \vec{y} \in B.
\end{equation*}
We assume that $\kappa(\cdot,\cdot;\bar{\vec\theta})$ is isotropic, and this implies that $\kappa(\cdot,\cdot;\bar{\vec\theta})$ is translation-invariant as well. Following the arguments in~\cite{fong2009black}, if the kernel is translation-invariant, the number of unique  coupling tensors $\ten{M}_b$ for $b \in A_{\mc{T}_{I\times I}}$, which we denote by $M_A$, is $\mc{O}(\log(n))$ (compared with $\mc{O}(n)$ in the general case). 
Exploiting this observation is advantageous from a computational and storage perspective. Since all the kernels in the numerical experiments are translation-invariant, for the rest of this section, the cost estimates use this fact. A more general treatment of exploiting translation-invariance in the context of $\mc{H}^{2}$-matrices is given in \cite{BORM2025106190}.

\subsection{Summary}
We summarize the complexity estimates relating to parametric $\mc{H}$-matrices and parametric $\mc{H}^{2}$-matrices in Table~\ref{tab:PH_cost_summary} and Table~\ref{tab:PH2_cost_summary}, respectively. The details of these calculations can be found in ~\ref{ssec:complexity_analysis_hmatrix} and~\ref{ssec:complexity_anaysis_h2matrix}.   Both Table~\ref{tab:PH_cost_summary} and Table~\ref{tab:PH2_cost_summary} highlight some benefits of our approach, and the following few points are worth highlighting:  
\begin{enumerate}
    \item The online stage requires no new kernel evaluations.
    \item The computational cost of the far-field component for the online stage is  logarithmic in $n$ (or requires $\mc{O}(\log(n))$ FLOPs with respect to $n$).
    \item The computational cost of the online stage is  linear in $n$.
    \item The computational and storage costs do not  have a term where the number of Chebyshev nodes  ($p_s$ and $p_{\theta}$) depends exponentially on $d$ or $d_{\theta}$.
\end{enumerate}
Point~(1) is beneficial for kernels that are expensive to evaluate. Point~(2) implies that our method can exploit the translation-invariant property of certain kernels during the online stage.  Point~(3) is important because the computational cost to construct a standard $\mc{H}$-matrix approximation of a kernel matrix is  log-linear in $n$. Point~(4) is  a consequence of using the tensor train decomposition for constructing the parametric approximations. In particular, for parametric $\mc{H}^{2}$-matrices, it is also due to the fact that we store the cluster basis and transfer matrices implicitly.

\paragraph{Comparison}
Compared with parametric $\mc{H}$-matrices, parametric $\mc{H}^{2}$-matrices inherent the benefits that $\mc{H}^{2}$-matrices have over $\mc{H}$-matrices. For example, the complexity estimates relating to parametric $\mc{H}^{2}$-matrices are  linear in $n$. The computational cost of the MVM operation is  linear in $n$ for parametric $\mc{H}^{2}$-matrices. In comparison, for parametric $\mc{H}$-matrices, the operation is  log-linear in $n$. We note, however, that the computational cost of the MVM operation for parametric $\mc{H}^{2}$-matrices has a term where the number of Chebyshev nodes depends exponentially on the problem dimension; in contrast, this is not the case for parametric $\mc{H}$-matrices. Also,  parametric $\mc{H}^2$-matrices are cheaper to store than  parametric $\mc{H}$-matrices.

\begin{table}[h!]
\centering
\small
\begin{tabular}{lcc}
\toprule
 & \textbf{Near-field component} & \textbf{Far-field component } \\
\midrule
\multicolumn{3}{l}{\textbf{FLOPs}} \\
\quad Offline 
  & $\mc{O}\!\bigl(n(r_{\text{nf}}^{2}C_{\text{leaf}} + d_{\theta}p_{\theta}r_{\text{nf}}^{2})\bigr)$
  & $\mc{O}\!\bigl(n\log(n)\,p_{s}r_{\text{ff}}^{2} 
     ~+~  \Delta\cdot \log(n)  pr_{\text{ff}}^3\bigr)$ \\[4pt]
\quad Online
  & $\mc{O}\!\bigl(n(d_{\theta}p_{\theta}r_{\text{nf}} + C_{\text{leaf}}r_{\text{nf}})\bigr)$
  & $\mc{O}\!\bigl(\log(n)d_{\theta}(p_{\theta}r_{\text{ff}}^{2} + r_{\text{ff}}^{3})\bigr)$ \\
\midrule
\multicolumn{3}{l}{\textbf{\makecell[l]{Storage \\ units}}} \\
\quad Offline
  & $\mc{O}\!\bigl(n(C_{\text{leaf}}r_{\text{nf}} + d_{\theta}p_{\theta}r_{\text{nf}})\bigr)$
  & $\mc{O}\!\bigl(n\log(n)r_{\text{ff}} 
     + \log(n)(d_{\theta}p_{\theta}r_{\text{ff}}^{2})\bigr)$ \\ 
\midrule
\multicolumn{3}{l}{\textbf{\makecell[l]{Kernel \\ evaluations}}} \\
\quad Offline 
  & $\mc{O}\!\bigl(n(r_{\text{nf}}C_{\text{leaf}} + d_{\theta}p_{\theta}r_{\text{nf}})\bigr)$
  & $\mc{O}\!   \bigl( \Delta\cdot\log(n) pr_{\text{ff}}^{2}\bigr)$ \\
\quad Online   
  & -- & -- \\
\midrule
\multicolumn{3}{c}{\textbf{Computational Cost of MVM (FLOPs)}} \\[4pt]
\midrule
\multicolumn{3}{c}{$\mc{O}(n\log(n)r_{\text{ff}}  ~  + ~ nC_{\text{leaf}})$} \\
\bottomrule
\end{tabular}
\caption{Parametric $\mc{H}$-matrix complexity estimates of the near-field component and far-field component in FLOPs, storage units, and kernel evaluations; additionally, the complexity estimate for performing MVM  in FLOPs. All complexity estimates are obtained for the case $d= 3$. }
\label{tab:PH_cost_summary}
\end{table}

\begin{table}[h!]
\centering
\small
\begin{tabular}{lcc}
\toprule
 & \textbf{Near-field component } & \textbf{Far-field component } \\
\midrule
\multicolumn{3}{l}{\textbf{FLOPs}} \\
\quad Offline 
  & $\mc{O}\!\bigl(n(r_{\text{nf}}^{2}C_{\text{leaf}} + d_{\theta}p_{\theta}r_{\text{nf}}^{2})\bigr)$
  & $\mc{O}\!\bigl(np_{s}^{} + \Delta\cdot \log(n)  p\,r_{\text{ff}}^{3}\bigr)$ \\[4pt]
\quad Online
  & $\mc{O}\!\bigl(n(d_{\theta}p_{\theta}r_{\text{nf}} + C_{\text{leaf}}r_{\text{nf}})\bigr)$
  & $\mc{O}\!\bigl(\log(n)d_{\theta}(p_{\theta}r_{\text{ff}}^{2} + r_{\text{ff}}^{3})\bigr)$ \\
\midrule
\multicolumn{3}{l}{\textbf{\makecell[l]{Storage \\ units}}} \\
\quad Offline
  & $\mc{O}\!\bigl(n(C_{\text{leaf}}r_{\text{nf}} + d_{\theta}p_{\theta}r_{\text{nf}})\bigr)$
  & $\mc{O}\!\bigl(np_{s} + \Delta\cdot \log(n)  p\,r_{\text{ff}}^{2}\bigr)$ \\ 
\midrule
\multicolumn{3}{l}{\textbf{\makecell[l]{Kernel \\ evaluations}}} \\
\quad Offline 
  & $\mc{O}\!\bigl(n(r_{\text{nf}}C_{\text{leaf}} + d_{\theta}p_{\theta}r_{\text{nf}})\bigr)$ 
  & $\mc{O}\!\bigl(\Delta\cdot\log(n) p\,r_{\text{ff}}^{2}\bigr)$ \\
\quad Online  
  & -- & -- \\

  \midrule
\multicolumn{3}{c}{\textbf{Computational Cost of MVM (FLOPs)}} \\[4pt]
\midrule
\multicolumn{3}{c}{$\mc{O}(n(p_s^{2}r_{\text{ff}}  ~ + ~  p_s^3 ~ + ~ C_{\text{leaf}}))$} \\
\bottomrule
\end{tabular}
\caption{Parametric $\mc{H}^{2}$-matrix complexity estimates of the near-field component and far-field component in FLOPs, storage units, and kernel evaluations; additionally, the  complexity estimate for performing MVM in FLOPs. All complexity estimates are obtained for the case $d = 3$.}
\label{tab:PH2_cost_summary}
\end{table}

\section{Numerical Experiments}\label{sec:num_experiments}
In this section we test the efficacy of the parametric hierarchical matrix method in various numerical experiments. Recall, the definition of the parametric hierarchical matrix method in Section~\ref{sssec:param_h_matrix_method_note}. We first summarize the choice of kernels and other problem settings.

\paragraph{Choice of Kernels}
 We test the effectiveness of our methods on kernels used in GPs and radial basis interpolation. These kernels are summarized in Table~\ref{tab:kern-table}, along with the associated parameters.  Note that $\Delta = 2d + d_\theta = 8$ for the Mat\'ern kernel, and for all other kernels $\Delta = 7$.
\begin{table}[!ht]
    \centering
    \begin{tabular}{l|l|l}
        \textbf{Name} & \textbf{Kernel Function} & \textbf{Property}\\
        \hline
        Exponential (E) & $\exp\left(-\frac{r}{\lambda}\right)$ & Positive-definite \\ 
        Thin-plate spline (TPS) & $\frac{r^2}{\lambda^2} \log\left(\frac{r}{\lambda}\right)$ & Indefinite \\
        Squared-Exponential (SE) & $\exp\left(-\left(\frac{r}{\lambda}\right)^2\right)$ & Positive-definite \\
        Multiquadric (MC) & $\left(1 + \left(\frac{r}{\lambda}\right)^2\right)^{1/2}$ & Indefinite \\
        Mat\'ern (MN) & $\frac{2^{ (1-\nu)}}{\Gamma(\lambda)}\left(\sqrt{2\nu}\frac{r}{\lambda}\right)^{\nu} B_{\nu}\left(\sqrt{2\nu}\frac{r}{\lambda}\right)$  & Positive-definite
    \end{tabular}
    \caption{Kernel functions of the form  $\kappa(\vec{x},\vec{y}; \vec \theta )$  for two  types of parameterization, $\vec{\theta} = (\lambda, \nu)$ and $\vec{\theta} = (\lambda)$. The vectors $\vec{x} \in \mathcal{X}$ and $\vec{y} \in \mathcal{Y}$ with the pairwise distance $r = \| \vec{x} - \vec{y}\|_{2}$, and $B_{\nu}$ is the modified Bessel function of the second kind.}
    \label{tab:kern-table}
\end{table}

\paragraph{Other Problem Settings} We employ the following problem setup unless stated otherwise.
\begin{enumerate}
    \item \textbf{Domain}: To synthetically construct ${X}$, we take $n$ points from $B = [0,1]^{d}$ uniformly at random. The admissibility parameter is $\eta = \sqrt{3}$.

    \item \textbf{Parameter Space}:  For the Mat\'ern kernel, we consider the two-dimensional parameter space $(\lambda, \nu) \in \Theta  = [.25, 1.0] \times [.5, 3]$. For all kernels besides Mat\'ern, we consider a one-dimensional parameter space $\lambda \in \Theta = [.25, 1.0]$.
\end{enumerate}

\paragraph{Error Calculation}
Forming the kernel matrix in its entirety is challenging for large $n$; hence, we employ the following heuristic to estimate the approximation error of the methods used in this section.   We form the index set $J  \subset I$ such that $|J| = 200$ by selecting points from $I$ uniformly at random.  We also fix a vector $\vec{x} \in \R^{n}$ that consists of $n$ points selected from $[0, 1]^{d}$ uniformly at random.
 Given a set of $30$ parameter values $\{\vec{\theta}_j\}_{j=1}^{30}$, chosen uniformly at random, we estimate the relative error as 
\[ \frac{1}{30}\sum_{j= 1}^{30} \frac{\|[\mat{K}(X,X;\vec\theta_j)\vec{x}]_{|J} - [\widetilde{\mat{K}}(\vec\theta_j)\vec{x}]_{|J} \|_2 }{ \|[\mat{K}(X,X;\vec\theta_j)\vec{x}]_{|J} \|_2 }.\]
 This output is referred to as \textbf{Error}. The same parameter samples, vector $\vec{x}$, and subset $J$ are used across all the methods. Other labels are summarized in Table~\ref{tab:numlabels} or introduced as needed.

\begin{table}[!ht]
\centering
\scriptsize
\begin{tabular}{@{}lp{0.75\linewidth}@{}} 
\toprule
\textbf{Label} & \textbf{Meaning} \\
\midrule
\textbf{Storage} & Storage required to store the components of the parametric hierarchical matrix approximation during the offline stage in gigabytes (GB).\\

\textbf{Offline Time} & Time required to form the offline near-field component and the offline far-field component. \\

\textbf{Error} & Mean of the MVM errors over the samples in parameter space $\Theta$ during the online stage. \\

\textbf{NF Time} & Time required to form the online near-field component. \\

\textbf{FF Time} & Time required to form the online far-field component. \\

\textbf{Online Time} & Sum of \textbf{NF Time} and \textbf{FF Time}. \\

\textbf{NF Ratio} & Number of entries required to store the online near-field component divided by the kernel matrix size $(n^2)$. \\

\textbf{FF Ratio} & Number of entries required to store the online far-field component divided by the kernel matrix size $(n^2)$. \\

\textbf{MVM} & Average time required to perform $30$ MVM operations, where one MVM operation is performed per sampled parameter.  \\ 

\textbf{Rank} & Computed as \[
\frac{1}{\big|A_{\mathcal{T}_{I\times I}}\big|}\sum_{b\in A_{\mathcal{T}_{I\times I}}}
\max\!\left\{\,r_{b,d},\, r_{b,d+d_\theta}\right\}.
\]
 \\ 
\bottomrule
\end{tabular}
\caption{Summary of the labels used in the Numerical Experiments section.}
\label{tab:numlabels}
\end{table}

\paragraph{Computing Environment}
The numerical results have been obtained on a computer with an Intel Xenon w9-3575X processor and 258GB of RAM. All numerical experiments were implemented in Python. 

\subsection{Parametric \texorpdfstring{$\mc{H}$}{}-Matrices}\label{ssec:param_H_exp}

\subsubsection{Size-Scaling Experiment}\label{ssec:ph_size_scale}
In this experiment, we fix the error tolerance $\epsilon_{\text{tol}} = 1 \times 10^{-5}$, and 
 the number of points $n$ is varied from the following values: $8^4, 8^5, 8^6$. The values of $l_{\max}$ are correspondingly varied from the following corresponding values: $2, 3, 4$. 
 This implies that the sub-matrices associated with the near-field block clusters have approximately $8^{4}$ entries. Recall that $l_{\max}$ is the maximum height of the cluster tree $\mc{T}_{I}$ defined in Section~\ref{ssec:cluster_tree}. We take $p_{s} = 15$ spatial nodes and $p_{\theta} = 27$ parameter space nodes. The metrics for the parametric $\mc{H}$-matrix method are in Table~\ref{tab:ph_matrix_method}, and the metrics for the induced $\mc{H}$-matrix approximation are in Table~\ref{tab:ph_matrix_induced}. Figure~\ref{fig:ph_online_time} plots the online time of the parametric $\mc{H}$-matrix method vs the row/column size of the kernel matrix ($n$). The data used to make the plot is also displayed in Table~\ref{tab:ph_matrix_method}.

\begin{table}[!ht]
\centering
\scriptsize
\begin{tabular}{llSSSS}
\hline 
\textbf{Kernel} & \textbf{n} & \textbf{Storage (GB)} & \textbf{Offline Time (s)} & \textbf{NF Time (s)} & \textbf{FF Time (s)} \\
\hline 
        E & $8^4$ & 0.22735576331615448 & 20.78155117802089 & 0.01705996296465552 & 0.005398983999233072 \\ 
         & $8^5$ & 2.962473578751087 & 65.9239173810056 & 0.1582115889643319 & 0.008626034831589397 \\ 
         & $8^6$ & 30.725051663815975 & 505.9057234459906 & 1.3143887709030726 & 0.011402798766115059 \\ \hline
        TPS & $8^4$ & 0.13027527183294296 & 8.78523734799819 & 0.009616057434080479 & 0.0025375786344132697 \\ 
         & $8^5$ & 2.129856660962105 & 44.71266502598883 & 0.10727457723211652 & 0.0062402890675002706 \\ 
         & $8^6$ & 28.64612789452076 & 389.34816267498536 & 0.9919309646327747 & 0.010197364867781288 \\ \hline
        SE & $8^4$ & 0.3561939150094986 & 27.26037262598402 & 0.018906248068863835 & 0.010764826235633034 \\ 
         & $8^5$ & 3.5380064249038696 & 86.23712820000947 & 0.15669828590471296 & 0.01633047900007417 \\ 
         & $8^6$ & 34.26585713028908 & 517.5477877850062 & 1.3210709002004781 & 0.019105613402401408 \\ \hline
        MC & $8^4$ & 0.1861504390835762 & 10.950619189999998 & 0.013850787599221804 & 0.0022775084983247024 \\ 
         & $8^5$ & 2.261663995683193 & 53.86749405000592 & 0.14858526390065283 & 0.0046978361303141964 \\ 
         & $8^6$ & 23.673058100044727 & 394.94293192299665 & 1.2818323839649868 & 0.006369246667600237 \\ \hline
        MN & $8^4$ & 0.46218743920326233 & 128.37178828800097 & 0.02895054513355717 & 0.028970349731389435 \\ 
         & $8^5$ & 4.028156481683254 & 428.92936769500375 & 0.2553370619299434 & 0.043370821229958285 \\ 
         & $8^6$ & 40.79921831935644 & 2234.2755926479877 & 2.264259932365773 & 0.054299557129464424 \\ \hline
\end{tabular}
\caption{Metrics for the parametric $\mc{H}$-matrix method for the size-scaling experiment.}
\label{tab:ph_matrix_method}
\end{table}

\begin{table}[!ht]
\centering
\scriptsize
\begin{tabular}{llSSSSS}
\hline
\textbf{Kernel} & \textbf{n} & \textbf{NF Ratio} & \textbf{FF Ratio} & \textbf{Rank} & \textbf{MVM Time (s)} & \textbf{Error} \\
\hline
        E & $8^4$ & 0.24058163166046143 & 0.6041431427001953 & 22.03481012658228 & 0.0336871382004271 & 5.147800571338306e-07 \\ 
         & $8^5$ & 0.040730297565460205 & 0.20187356416136026 & 20.21518987341772 & 0.4247638236702187 & 5.073300986152041e-07 \\ 
         & $8^6$ & 0.005798218480776995 & 0.042373117510578595 & 18.352320675105485 & 4.765257990334066 & 5.200130158710938e-07 \\ \hline
        TPS & $8^4$ & 0.24058163166046143 & 0.43422383069992065 & 15.5 & 0.021385831931062662 & 2.0514704962768883e-05 \\ 
         & $8^5$ & 0.040730297565460205 & 0.17983623035252094 & 16.514240506329113 & 0.38817449543081844 & 1.907213550518248e-05 \\ 
         & $8^6$ & 0.005798218480776995 & 0.04416597983799875 & 17.180379746835442 & 4.849835429132994 & 1.859469368983231e-05 \\ \hline
        SE & $8^4$ & 0.24058163166046143 & 0.9257956743240356 & 33.78164556962025 & 0.029205947672016917 & 4.282699264786385e-07 \\ 
         & $8^5$ & 0.040730297565460205 & 0.26708063017576933 & 28.678797468354432 & 0.4551596964292306 & 4.136211753004516e-07 \\ 
         & $8^6$ & 0.005798218480776995 & 0.049184685049112886 & 24.01793248945148 & 5.0835493388032775 & 4.376821938947526e-07 \\ \hline
        MC & $8^4$ & 0.24058163166046143 & 0.40852659940719604 & 14.876582278481013 & 0.021204026134607073 & 4.542619133393766e-07 \\ 
         & $8^5$ & 0.040730297565460205 & 0.14184886403381824 & 14.218354430379748 & 0.3583243513023869 & 4.201601156297533e-07 \\ 
         & $8^6$ & 0.005798218480776995 & 0.028915283081005327 & 12.887130801687764 & 4.146038041834254 & 4.668586693833018e-07 \\ \hline
        MN & $8^4$ & 0.24058163166046143 & 0.7343620657920837 & 26.718354430379748 & 0.02828435040137265 & 4.41602304003695e-07 \\ 
         & $8^5$ & 0.040730297565460205 & 0.2367298984900117 & 23.98259493670886 & 0.44066077516472435 & 3.6717036881621475e-07 \\ 
         & $8^6$ & 0.005798218480776995 & 0.04893575563619379 & 21.518987341772153 & 5.050471825366063 & 3.832636480412295e-07 \\ \hline
\end{tabular}
\caption{Metrics for the $\mc{H}$-matrix approximation induced by the parametric $\mc{H}$-matrix method for the size-scaling experiment.}
\label{tab:ph_matrix_induced}
\end{table}

In Table~\ref{tab:ph_matrix_method} we can see that for each kernel, the storage (\textbf{Storage}) is growing like $\mc{O}(n\log(n))$ with respect to $n$. Additionally, the far-field time (\textbf{FF Time}) is growing much slower than the near-field time (\textbf{NF Time})  for all kernels. 
This is to be expected since the computational cost associated with the far-field time is logarithmic in $n$ (or requires  $\mc{O}(\log(n))$ FLOPS with respect to $n$), while the cost associated with the near-field time is linear in $n$. As shown  in Figure~\ref{fig:ph_online_time}, the online time (\textbf{NF Time} + \textbf{FF Time}) has linear growth with respect to $n$.

In Table~\ref{tab:ph_matrix_induced}, each kernel besides TPS has a mean error (\textbf{Error}) less than the desired tolerance $1 \times 10^{-5}$. We investigate this further in the error-scaling experiment  in Section~\ref{ssec:ph_error_scale}.  The near-field ratio (\textbf{NF Ratio}) and far-field ratio (\textbf{FF Ratio}) are also decreasing for increasing values of $n$ because the denominator of the ratios is $n^2$, while, theoretically, the numerators of the ratio have linear or log-linear growth with respect to $n$; see, Table~\ref{tab:PH_cost_summary}. Lastly, the MVM time (\textbf{MVM Time}) demonstrates log-linear growth with respect to $n$.

\begin{figure}[!ht]
    \centering
    \includegraphics[width=.55\linewidth]{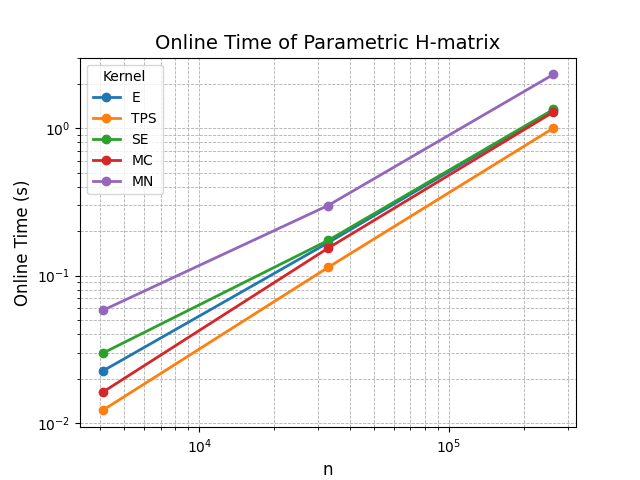}
    \caption{Online time (\textbf{NF Time} + \textbf{FF Time}) of the parametric $\mc{H}$-matrix method vs $n$ for various kernels from Table~\ref{tab:kern-table} in log-log scale.}
    \label{fig:ph_online_time}
\end{figure}

% matern for adding paramter range. 
\subsubsection{Error-Scaling Experiment}\label{ssec:ph_error_scale}
For the error-scaling experiment, we fix the spatial dimension to be $d = 3$ and the number of points to be $n = 3\cdot 8^{5}$. The error tolerances are then varied $\epsilon_{\text{tol}} \in \{ 1 \times 10^{-4}, 1 \times 10^{-6}, 1 \times 10^{-8}\}$. We perform all  these experiments on the kernels listed in Table~\ref{tab:kern-table}. We set $l_{\max} = 3$, which implies that the near-field blocks have approximately $(3\cdot 8^2)^2$ entries. Note that the near-field block sizes are larger in this experiment than in the size-scaling experiment. The reason is that larger ranks are needed to achieve smaller error tolerances.  All other experiment parameters are the same as the size-scaling experiment in Section~\ref{ssec:ph_size_scale}.  The metrics for the parametric $\mc{H}$-matrix method are in Table~\ref{tab:ph_matrix_method_err}, and the metrics for the induced $\mc{H}$-matrix approximation are in Table~\ref{tab:ph_matrix_induced_err}.

\begin{table}[!ht]
\centering
\scriptsize
\begin{tabular}{llSSSS }
\hline
\textbf{Kernel} & $\mathbf{\epsilon}$ & \textbf{Storage (GB)} & \textbf{Offline Time (s)} & \textbf{NF Time (s)} & \textbf{FF Time (s)} \\
\hline 
        E & 1e-04 & 11.826043926179409 & 226.6242773369886 & 0.9977314236360447 & 0.004162446501626013 \\ 
         & 1e-06 & 18.864025220274925 & 330.03486727701966 & 1.0955766163320126 & 0.01864384052848133 \\ 
         & 1e-08 & 28.417292043566704 & 508.62077595698065 & 1.4618758018710651 & 0.06779941882899342 \\ \hline
        TPS & 1e-04 & 8.867276176810265 & 149.18962035200093 & 0.6943307064308707 & 0.003538225203131636 \\ 
         & 1e-06 & 11.939141638576984 & 179.87291419901885 & 0.6962087945653669 & 0.011732480865127097 \\ 
         & 1e-08 & 16.68157782405615 & 244.7172486950003 & 0.7003462632002386 & 0.03612441426763932 \\ \hline
        SE & 1e-04 & 14.462113447487354 & 265.05797848198563 & 1.0454912326318058 & 0.007357135400525294 \\ 
         & 1e-06 & 23.144018657505512 & 402.9725305119937 & 1.3218179819668876 & 0.037512322833451135 \\ 
        & 1e-08 & 34.31463625282049 & 657.5240839299804 & 1.59994721483284 & 0.12698985239937124 \\ \hline
        MC & 1e-04 & 10.842753186821938 & 194.64766271898407 & 0.983929568566964 & 0.002605181804392487 \\ 
         & 1e-06 & 16.629703357815742 & 296.3932287229982 & 1.096527975833548 & 0.009333790366266232 \\ 
         & 1e-08 & 26.770371429622173 & 453.201653400989 & 1.5385305613298745 & 0.03655806483293418 \\ \hline
        MN & 1e-04 & 16.54330137372017 & 1653.4426038140082 & 1.3022776292316849 & 0.01780048643607491 \\ 
         & 1e-06 & 29.423741206526756 & 2540.7761728839832 & 1.9750949731649599 & 0.1041156684649953 \\ 
         & 1e-08 & 45.32093583792448 & 4705.259863267973 & 2.498829715162477 & 0.43439386020084686 \\ \hline
\end{tabular}%
\caption{Metrics for the parametric $\mc{H}$-matrix method for the error-scaling experiment.}
\label{tab:ph_matrix_method_err}
\end{table}

\begin{table}[!ht]
\centering
\scriptsize
\begin{tabular}{llSSSSS}
\hline
\textbf{Kernel} & $\mathbf{\epsilon}$& \textbf{NF ratio} & \textbf{FF Ratio} & \textbf{Rank} & \textbf{MVM Time (s)} & \textbf{Error} \\
\hline
        E & 1e-04 & 0.04060511228938898 & 0.04205171029186911 & 12.936708860759493 & 0.8849937185344364 & 7.126903531125308e-06 \\ 
         & 1e-06 & 0.04060511228938898 & 0.09792986512184143 & 29.35126582278481 & 1.271885143800561 & 4.193186570636001e-08 \\ 
         & 1e-08 & 0.04060511228938898 & 0.1861387923773792 & 54.76898734177215 & 1.9378557331995883 & 3.8642653427071104e-10 \\ \hline
        TPS & 1e-04 & 0.04060511228938898 & 0.04164394301672777 & 11.799050632911392 & 0.8817883814005957 & 0.00020483520816881045 \\ 
         & 1e-06 & 0.04060511228938898 & 0.08358700977017482 & 23.199367088607595 & 1.1723797780675038 & 1.4624012164146876e-06 \\ 
         & 1e-08 & 0.04060511228938898 & 0.14748092709730068 & 40.08227848101266 & 1.6553965426690411 & 7.662289892194275e-09 \\ \hline
        SE & 1e-04 & 0.04060511228938898 & 0.05745603972011142 & 18.354430379746834 & 0.9801930925643926 & 5.6515748948117235e-06 \\ 
         & 1e-06 & 0.04060511228938898 & 0.1274307162190477 & 41.324367088607595 & 1.4588776598674789 & 3.931047005505399e-08 \\ 
         & 1e-08 & 0.04060511228938898 & 0.23028814751240942 & 75.70253164556962 & 2.2905972919999233 & 4.2077992576846805e-10 \\ \hline
        MC & 1e-04 & 0.04060511228938898 & 0.030500966641638014 & 9.457278481012658 & 0.7995834579332344 & 6.3591764256374105e-06 \\ 
         & 1e-06 & 0.04060511228938898 & 0.06961557093179888 & 21.026898734177216 & 1.0783204420654025 & 3.1472867016741163e-08 \\ 
         & 1e-08 & 0.04060511228938898 & 0.1374394316226244 & 41.29746835443038 & 1.5615458614338422 & 2.985357942046098e-10 \\ \hline
        MN & 1e-04 & 0.04060511228938898 & 0.04942204327219062 & 15.265822784810126 & 0.9403616138345873 & 5.182641162234725e-06 \\ 
         & 1e-06 & 0.04060511228938898 & 0.11578298122104672 & 34.8876582278481 & 1.3975991854298626 & 2.985850029422692e-08 \\ 
         & 1e-08 & 0.04060511228938898 & 0.22279695473197433 & 65.84493670886076 & 2.2678442738979356 & 5.268815990468351e-08 \\ \hline
\end{tabular}
\caption{Metrics for the $\mc{H}$-matrix approximation induced by the parametric $\mc{H}$-matrix method for the error-scaling experiment.}
\label{tab:ph_matrix_induced_err}
\end{table}

In Table~\ref{tab:ph_matrix_method_err}, every column associated with a metric (\textbf{Storage}, \textbf{Offline Time}, \textbf{NF Time}, and \textbf{FF Time}) increases for decreasing error tolerances ($\mathbf{\epsilon}_{\text{tol}}$). The reason is that  larger ranks are required for smaller error tolerances.

We will now consider Table~\ref{tab:ph_matrix_induced_err}. All kernels have mean errors (\textbf{Error}) that are less than the requested error tolerances ($\mathbf{\epsilon}_{\text{tol}}$). Again, as in Table~\ref{tab:ph_matrix_method_err}, every column (\textbf{FF Ratio}, \textbf{Rank}, \textbf{MVM Time}) increases with decreasing error tolerances except for the mean errors (\textbf{Error}) and the near-field ratios (\textbf{NF Ratio}). The near-field ratio does not increase with decreasing error tolerances because the size of the sub-matrices associated with the near-field block clusters remains constant, regardless of the error tolerance.

\subsubsection{Comparison with \texorpdfstring{$\mc{H}$}{H}-ACA}
 We now compare our method with an approach that obtains an $\mc{H}$-matrix approximation of a kernel matrix by employing the $\mc{H}$-ACA method, which is described in ~\ref{sssec:H-ACA}. We note that the $\mc{H}$-ACA method has no offline stage; in other words, it does not use precomputation. Now, we describe the experimental setup. We fix $\epsilon_{\text{tol}} = 1 \times 10^{-5}$ and vary  $n \in \{8^4, 8^5, 8^6\}$. We set the values of $\ell_{\max} = 2, 3, 4$ corresponding to $n = 8^4, 8^5, 8^6$, respectively. For this experiment, we only consider the TPS, MC, and MN kernels. All other experiment parameters are identical to those in the size-scaling experiment in Section \ref{ssec:ph_size_scale}. The metrics for the $\mc{H}$-ACA method are in Table~\ref{tab:metrics_h-aca}. The label \textbf{Rank} in the context of Table~\ref{tab:metrics_h-aca} is computed as follows during the online stage: 
 $\frac{1}{\big|A_{\mathcal{T}_{I\times I}}\big|}\sum_{b\in A_{\mathcal{T}_{I\times I}}}t_{b}, $
 where  $t_{b}$ is defined in \eqref{eqn:h-aca}. Table~\ref{tab:ph_matrix_method} and Table~\ref{tab:ph_matrix_induced} from the size-scaling experiment are used for comparison. Figure~\ref{fig:speed_up_plot} analyzes the online time of the parametric $\mc{H}$-matrix method compared with the online time of the $\mc{H}$-ACA method and plots the data from Table~\ref{tab:ph_matrix_method} and Table~\ref{tab:metrics_h-aca}.

\begin{table}[!ht]
\centering
\scriptsize
\begin{tabular}{llSSSSS}
\hline
\textbf{Kernel} & \textbf{n} & \textbf{NF Time (s)} & \textbf{FF Time (s)} & \textbf{Rank} & \textbf{MVM Time (s)} & \textbf{Error} \\
\hline
        TPS & $8^4$ & 0.03644347339868546 & 1.5360264109340884 & 18.492894056847547 & 0.01788279203562221 & 8.179306497185425e-07 \\ 
         & $8^5$ & 0.4026316987011038 & 26.92020376959796 & 14.482940051020408 & 0.35016238726481486 & 1.489849112621881e-06 \\ 
         & $8^6$ & 3.7452182157343485 & 301.2743917054332 & 13.202644774143595 & 4.395213656629979 & 1.6521071902395346e-06 \\ \hline
        MC & $8^4$ & 0.019387487732456066 & 0.8938631379346286 & 9.320090439276486 & 0.015337907732464373 & 3.9700018100284967e-07 \\ 
         & $8^5$ & 0.2254559289343888 & 13.408968176335717 & 6.351349914965986 & 0.28692058693268335 & 6.995850056999398e-07 \\ 
         & $8^6$ & 2.140379659900403 & 125.70891178339856 & 5.347959049866996 & 3.299539877670274 & 8.9806179091711e-07 \\ \hline
        MN & $8^4$ & 0.9696584351651836 & 3.3367395600032372 & 11.53875968992248 & 0.018585787364281715 & 4.997708640682906e-07 \\ 
         & $8^5$ & 9.21692985619981 & 58.89678179980159 & 9.002639597505668 & 0.3253305602624702 & 8.41120782802928e-07 \\ 
         & $8^6$ & 78.11359323206901 & 639.2192293178329 & 5.921157880933648 & 3.6906269950986217 & 1.13020238002545e-06 \\ \hline
\end{tabular}%
\caption{Size-scaling metrics for the $\mc{H}$-ACA method.}
\label{tab:metrics_h-aca}
\end{table}

We first compare the metrics of the $\mc{H}$-ACA method with the parametric $\mc{H}$-matrix method, by comparing Table~\ref{tab:metrics_h-aca} with Table~\ref{tab:ph_matrix_method}. 
%Comparing our method with the $\mc{H}$-ACA method,
For each kernel, we can see that the near-field times (\textbf{NF Time}) and far-field times (\textbf{FF Time}) of the $\mc{H}$-ACA method are the same as or greater than those for our method in Table~\ref{tab:ph_matrix_method}. For the largest size $n = 8^6$, the near-field timings of the $\mc{H}$-ACA method are at most $36.2 \times$ greater and at least $1.4 \times$ greater when compared with our method. The highest near-field speedup is achieved by the MN kernel, and the lowest is achieved by the MC kernel. These results align with our expectations, since evaluating the MN kernel is relatively expensive compared with the MC kernel. In Figure~\ref{fig:speed_up_plot}, we can see  that the  graph on the left demonstrates sublinear growth of the speedup factor with respect to $n$, while the  graph on the right demonstrates linear growth with respect to $n$; in terms of concrete numbers, we see overall speeds up from $56 \times$ to $309\times$ when comparing our method against the $\mc{H}$-ACA method. These results match   our theoretical expectations. The  computational cost of the online stage of the parametric $\mc{H}$-matrix method is linear in $n$ (or requires $\mc{O}(n)$ FLOPS with respect to $n$), and for the far-field component of the online stage  is logarithmic in $n$; this information is found in Table~\ref{tab:PH_cost_summary}. In contrast, the computational cost is log-linear in $n$ for the $\mc{H}$-ACA method.

We next compare the metrics of the $\mc{H}$-matrix approximation induced by the $\mc{H}$-ACA method and the parametric $\mc{H}$-matrix method, by comparing 
Table~\ref{tab:metrics_h-aca} with Table~\ref{tab:ph_matrix_induced}. 
For Table~\ref{tab:metrics_h-aca}, the mean errors (\textbf{Error}) are below the desired tolerance $1 \times 10^{-5}$, which means the $\mc{H}$-ACA method works as intended. When comparing the mean errors between tables, for all kernels but the TPS kernel, our method achieves comparable errors that are lesser or at most a factor of $1.2\times$ greater. The ranks (\textbf{Rank}) of our method are at most $3.63\times$ greater than the ranks of the $\mc{H}$-ACA method. This is to be expected because our method approximates over the whole parameter space $\Theta$. To summarize, at the cost of less compression,  our method can produce an $\mc{H}$-matrix approximation more efficiently, with comparable errors, than the $\mc{H}$-ACA method.

\begin{figure}
    \centering
    \includegraphics[width=1.0\linewidth]{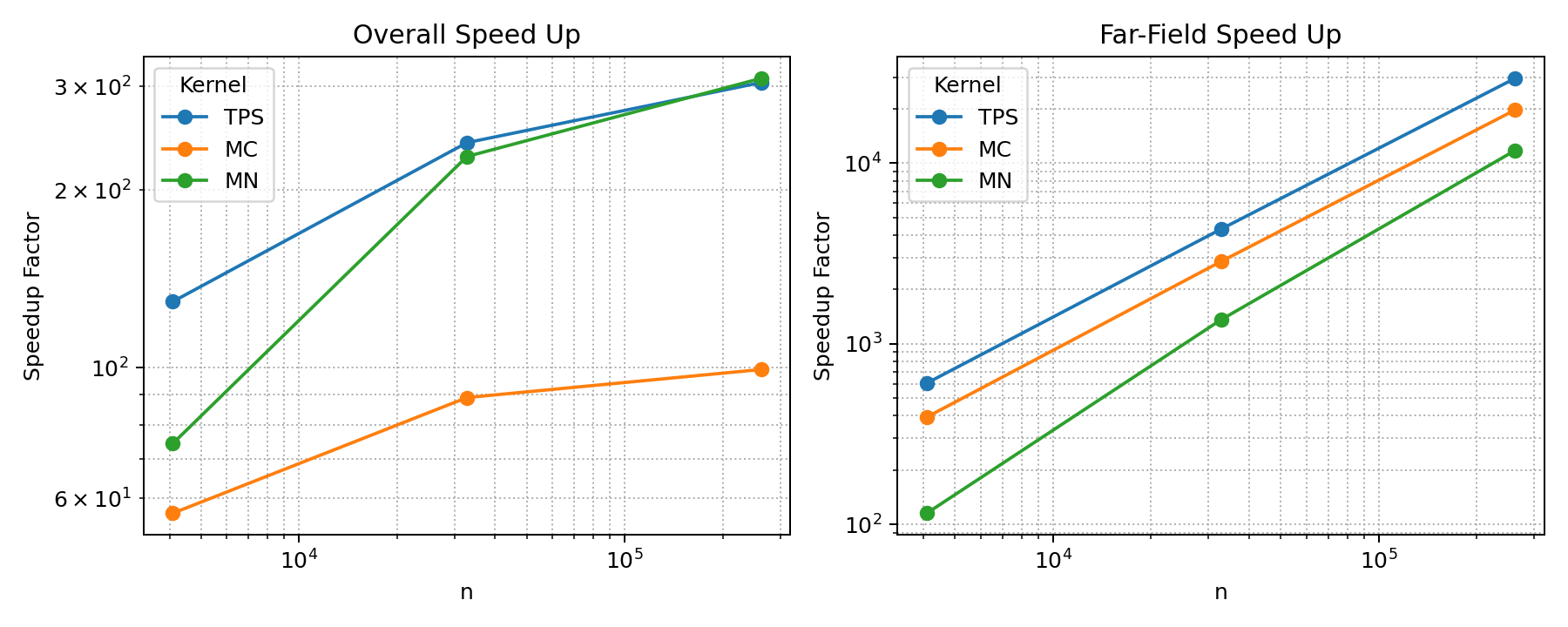}
    \caption{Online time comparison between the parametric $\mc{H}$-matrix method and the $\mc{H}$-ACA method. The speedup factor is the ratio of the  online time of the $\mc{H}$-ACA method to the online time of the parametric $\mc{H}$-matrix method. The far-field speedup is defined analogously. Both plots use a log-log scale.}
    \label{fig:speed_up_plot}
\end{figure}

\subsubsection{Larger Parameter Range}\label{sssec:larger_param_range}
We use an almost identical setup to the one used in Section~\ref{ssec:ph_size_scale}, but with  the following changes to the problem  specifications. We  consider only the Mat\'ern kernel, and the two-dimensional parameter space $(\lambda, \nu) \in \Theta  = [.1, 1.0] \times [.5, 3]$. 
We perform this experiment because the TPS kernel, in particular, does not induce an $\mc{H}$-matrix approximation with an approximation error less than the desired error tolerance. This occurs when the parametric $\mc{H}$-matrix method is applied to a problem set up with a larger parameter space where $\lambda$ takes on lower values. The metrics for the parametric $\mc{H}$-matrix method are in Table~\ref{tab:large_range_ph_matrix_method}, and the metrics for the induced $\mc{H}$-matrix approximation are in Table~\ref{tab:large_range_ph_matrix_induced}.

\begin{table}[!ht]
\centering
\scriptsize
\begin{tabular}{llSSSS}
\hline 
\textbf{Kernel} & \textbf{n} & \textbf{Storage (GB)} & \textbf{Offline Time (s)} & \textbf{NF Time (s)} & \textbf{FF Time (s)} \\
\hline 
        MN & $8^4$ & 0.5501483976840973 & 165.9154506729683 & 0.03207579153822735 & 0.03423638417346713 \\ \hline
        MN & $8^5$ & 4.830147959291935 & 572.3524300089921 & 0.29195286143027865 & 0.055432565341470764 \\ \hline
        MN & $8^6$ & 48.563032664358616 & 2911.1236302310135 & 2.405758865134946 & 0.07292162556550466 \\ \hline
    \end{tabular}
    \caption{Metrics for the parametric $\mc{H}$-matrix method for the larger parameter range experiment.}
\label{tab:large_range_ph_matrix_method}
\end{table}
\begin{table}[!ht]
\centering
\scriptsize
\begin{tabular}{llSSSSS}
\hline
\textbf{Kernel} & \textbf{n} & \textbf{NF ratio} & \textbf{FF Ratio} & \textbf{Rank} & \textbf{MVM Time (s)} & \textbf{Error} \\
\hline
        MN & $8^4$ & 0.24058163166046143 & 0.7958642244338989 & 27.841772151898734 & 0.028016822734692446 & 2.4243227551892626e-06 \\ \hline
        MN & $8^5$ & 0.040730297565460205 & 0.2719642873853445 & 26.180379746835442 & 0.48598232305957934 & 1.0870218127532919e-06 \\ \hline
        MN & $8^6$ & 0.005798218480776995 & 0.05829529932816513 & 24.33438818565401 & 5.50262767696598 & 1.028431396455132e-06 \\ \hline
    \end{tabular}
    \caption{Metrics for the $\mc{H}$-matrix approximation induced by the parametric $\mc{H}$-matrix method for the larger parameter range experiment.}
\label{tab:large_range_ph_matrix_induced}
\end{table}

In Table~\ref{tab:large_range_ph_matrix_induced}, each entry has a mean error (\textbf{Error}) less than the desired tolerance $1 \times 10^{-5}$. Comparing Table~\ref{tab:large_range_ph_matrix_induced} and Table~\ref{tab:ph_matrix_induced}, the rank (\textbf{Rank}) is greater in Table~\ref{tab:large_range_ph_matrix_induced}; this is because the length scale parameter $\lambda$ takes on lower values. 
Consequently, every metric in Table~\ref{tab:large_range_ph_matrix_method} and Table~\ref{tab:large_range_ph_matrix_induced} is greater or equal to those in Table~\ref{tab:ph_matrix_method} and Table~\ref{tab:ph_matrix_induced}, respectively.

\subsection{Parametric \texorpdfstring{$\mc{H}^{2}$}{}-matrices}\label{ssec:param_H2_exp}
For these experiments, we take $p_{s} = 8$ spatial nodes and $p_{\theta} = 27$ parameter space nodes, and we set the tolerance $\epsilon_{\text{tol}} = 1 \times 10^{-5}$. The number of points $n$ is varied from the following values: $8^4, 8^5, 8^6$. The values of $l_{\max}$ are correspondingly varied from the following corresponding values: $2, 3, 4$. Thus, the size of the sub-matrices associated with the near-field block clusters is approximately $(8^2)^2$. Note that the values of $l_{\max}$ are chosen to ensure a fair balance between compression and the computational cost of performing MVM with respect to the $\mc{H}^{2}$-matrix approximation. Recall that the choice of $l_{\max}$ affects the value of $C_{\text{leaf}}$, which in turn affects the complexity estimates in Section~\ref{sec:comp_cost_and_storage}. The kernels chosen for this experiment are the MN kernel and the MC kernel (see Table~\ref{tab:kern-table}).

\subsection{Size-Scaling Experiment}\label{sssec:param_H2_exp_size_scaling}
First, we perform a size-scaling experiment  to understand how parametric $\mc{H}^{2}$-matrices behave as the value of $n$ is varied. The metrics for the parametric $\mc{H}^{2}$-matrix are in Table~\ref{tab:ph_h2_method}, and the metrics for the $\mc{H}^{2}$ matrix approximation induced during the online stage  are in Table~\ref{tab:ph_h2_induced}. In Table~\ref{tab:ph_h2_induced},  we introduce a new label: \textbf{Coupling Ratio}.
The coupling ratio is computed as follows. During the online stage, when an $\mc{H}^{2}$-matrix approximation is induced, we compute the ratio of the number of entries required to store the coupling matrices associated with all the far-field block clusters over the size of the kernel matrix $(n^2)$; for our method, the coupling ratio is  explicitly calculated with the  formula
\[
\frac{1}{n^{2}}
\sum_{b \in A_{\mathcal{T}_{I \times I}}} 
\left(
    p_{s} \left(
        \sum_{i=1}^{3} r_{b, i-1} r_{b, i} 
        \;+\;
        \sum_{i = 3+d_\theta + 1}^{\Delta-1} r_{b, i} r_{b, i+1}
    \right)
    \;+\;
    r_{b,3} r_{b, 3+d_\theta}
\right).
\]
The coupling ratio gives us an idea of the compression afforded by the parametric $\mc{H}^{2}$-matrix method. In particular, the $\mc{H}^{2}$-matrix approximation induced by our method stores the coupling matrices implicitly in the TT-format, and storing it for an arbitrary far-field block cluster requires $\mc{O}( dp_sr_{\text{ff}}^2)$ storage units.

Figure~\ref{fig:ph2_online_time} displays the online time  versus the row/column size of the kernel matrix ($n$). Note that the data for the plot is from Table~\ref{tab:ph_h2_method}.

\begin{table}[!ht]
\centering
\scriptsize
\begin{tabular}{llSSSS }
\hline
\textbf{Kernel} & \textbf{n} & \textbf{Storage (GB)} & \textbf{Offline Time (s)} & \textbf{NF Time (s)} & \textbf{FF Time (s)}   \\
\hline 
        MC & $8^4$ & 0.17519529908895493 & 10.675458701007301 & 0.01632874343097986 & 0.002508193464988532 \\ 
         & $8^5$ & 1.5107503533363342 & 51.12653805100126 & 0.16660964136729792 & 0.005064358563201191 \\ 
         & $8^6$ & 11.997556552290916 & 321.68103695398895 & 1.3908752916642697 & 0.007793689699610695 \\ \hline
        MN & $8^4$ & 0.4708950072526932 & 147.93336735697812 & 0.024703859301128735 & 0.029045340929102773 \\ 
         & $8^5$ & 2.701526515185833 & 517.9021628319751 & 0.26192410666650784 & 0.049098198634843965 \\ 
         & $8^6$ & 18.8676313534379 & 2609.347484881 & 2.1249719614347367 & 0.06536047966219485 \\ \hline

\end{tabular}%
\caption{Metrics for the parametric $\mc{H}^{2}$-matrix method for the size-scaling experiment}
\label{tab:ph_h2_method}
\end{table}

\begin{table}[!ht]
\centering
\scriptsize
\begin{tabular}{llSSSSS }
\hline
\textbf{Kernel} & \textbf{n}  & \textbf{MVM Time (s)} & \textbf{Error} & \textbf{Coupling Ratio} & \textbf{NF ratio} & \textbf{FF ratio}    \\
\hline 
        MC & $8^4$ & 0.08622235950024333 & 9.44417362696076e-07 & 0.06046020984649658 & 0.24058163166046143 & 0.07382690906524658 \\ 
        ~ & $8^5$ & 0.9913533618314735 & 8.172899441108143e-07 & 0.0018802303820848465 & 0.040730297565460205 & 0.0035539288073778152 \\ 
        ~ & $8^6$ & 10.324564318631504 & 9.436115314116979e-07 & 4.14016394643113e-05 & 0.005798218480776995 & 0.00025065864610951394 \\ \hline 
        MN & $8^4$ & 0.09721956369952144 & 2.5738586424163965e-06 & 0.14734703302383423 & 0.24058163166046143 & 0.16071373224258423 \\ 
        ~ & $8^5$ & 1.0922178829321638 & 1.1207275417421124e-06 & 0.004155942238867283 & 0.040730297565460205 & 0.005829640664160252 \\ 
        ~ & $8^6$ & 11.143152980263888 & 9.931369940374196e-07 & 8.682202314957976e-05 & 0.005798218480776995 & 0.0002960790297947824 \\ \hline
\end{tabular}%
\caption{Metrics for the $\mc{H}^{2}$-matrix approximation induced by the parametric $\mc{H}^{2}$-matrix method for the size-scaling experiment
}
\label{tab:ph_h2_induced}
\end{table}

\begin{figure}[!ht]
    \centering
    \includegraphics[width=.55\linewidth]{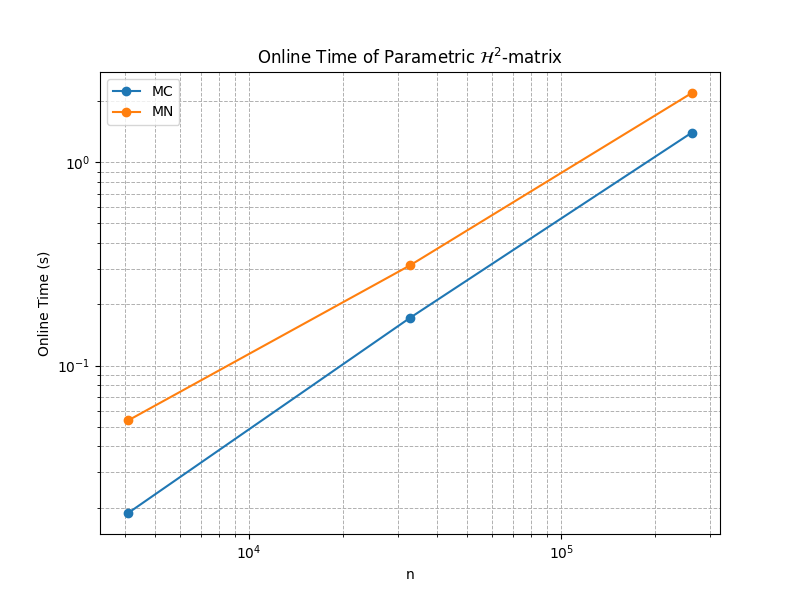}
    \caption{Online time (\textbf{NF Time} + \textbf{FF Time}) of the parametric $\mc{H}^{2}$-matrix method vs $n$ for certain kernels from Table~\ref{tab:kern-table} in log-log scale.}
    \label{fig:ph2_online_time}
\end{figure}

In Table~\ref{tab:ph_h2_method}, we can see that for each kernel, the ratio of the storage (\textbf{Storage}) required for our method, with respect to the storage for the entire kernel matrix explicitly, is decreasing. The maximum storage required is approximately $19$ GB for the MN kernel. The far-field time (\textbf{FF time}) is growing much slower than the near-field time (\textbf{NF time}) for all kernels. This is to be expected since the  computational cost associated with the far-field component of the parametric $\mc{H}^{2}$-matrix method is logarithmic in $n$, while the near-field component is linear in $n$. In Figure~\ref{fig:ph2_online_time}, the online time (\textbf{NF Time} + \textbf{FF Time}) of our method demonstrates linear growth with respect to $n$.

In Table~\ref{tab:ph_h2_induced}, each kernel has a mean error (\textbf{Error}) less than the desired tolerance $1 \times 10^{-5}$. The time required to perform matrix-vector multiplication (\textbf{MVM Time}) is also growing for increasing sizes of $n$; the reason is that the computational cost to perform MVM in the $\mc{H}^{2}$-matrix format is linear in $n$. The coupling ratio (\textbf{Coupling Ratio}) decreases rapidly when $n$ is increasing, since the numerator grows like $\mc{O}(\log n)$, with respect to $n$, while the denominator is $n^{2}$.

\paragraph{Comparison with Parametric $\mc{H}$-Matrices}
We compare the parametric $\mc{H}$-matrix method with the parametric $\mc{H}^{2}$-matrix method by comparing the experiments, related to the MN kernel, in Section~\ref{sssec:param_H2_exp_size_scaling} and Section~\ref{sssec:larger_param_range}. The larger parameter range experiment is chosen for comparison because it uses an identical problem setup. First, we compare both methods by comparing Table~\ref{tab:large_range_ph_matrix_method} with Table~\ref{tab:ph_h2_method} For the parametric $\mc{H}$-matrix method, the storage (\textbf{Storage}) is $1.1 \times$ to $2.7 \times$ greater, the offline time (\textbf{Offline Time}) is $1.10 \times$ to $1.12\times$ greater. The near-field timings (\textbf{NF Time}) of both methods are within a factor of $2$ of each other. Similarly, the far-field timings (\textbf{FF Time}) are also within a factor of $2$ of each other. 

Now, we compare the hierarchical matrix approximations that are induced by the parametric $\mc{H}$-matrix and parametric $\mc{H}^{2}$-matrix methods, by comparing Table~\ref{tab:large_range_ph_matrix_induced} with Table~\ref{tab:ph_h2_induced}. For both methods, the near-field ratios (\textbf{NF Ratio}) are the same, but for the parametric $\mc{H}$-matrix method the far-field ratio (\textbf{FF Ratio}) is $4.72 \times$ to $197 \times$ greater; this is because storing the far-field components is linear in $n$ for $\mc{H}^{2}$-matrices. In conclusion, the parametric $\mc{H}^{2}$-matrix method is more storage efficient when compared to the parametric $\mc{H}$-matrix method.

For the parametric $\mc{H}^{2}$-matrix method, the mean time required to perform an MVM operation (\textbf{MVM Time}) is $2 \times$ to $3.47 \times$ greater; this numerical result is slightly puzzling due to the following. For the parametric $\mc{H}^{2}$-matrix method, the computational cost to perform MVM is linear in $n$, and for the parametric $\mc{H}$-matrix method, the computational cost is log-linear in $n$; see  Table~\ref{tab:PH_cost_summary} and Table~\ref{tab:PH2_cost_summary}. Still, the numerical result can be explained, the terms $p_{s}^{3}$ and $p_{s}^{3-1}$ are  larger than $\log(n)$, for the values of $n$ and $p_s$ that we have taken in this numerical experiment.

\subsection{Comparison with Hybrid Cross Approximation}
For this experiment, we compare our method with a variation of the hybrid cross approximation (HCA) method introduced in \cite{borm2005hybrid}. Specifically, we use the first approach in Section 3.1 of \cite{borm2005hybrid}, with minor modifications. We will refer to this method as the $\mc{H}^{2}$-HCA method, and it is described in~\ref{sssec:H2-ACA}. Again, we note that the $\mc{H}^{2}$-ACA method has no offline stage, meaning that it does not use precomputation. However, the online far-field component does not include the cost of forming the cluster basis matrices and transfer matrices; hence, the computational cost of forming it is logarithmic in $n$. The $\mc{H}^{2}$-HCA method is chosen for comparison because it constructs an $\mc{H}^{2}$-matrix approximation using multidimensional Lagrange interpolation, and the method can also exploit translation invariance. The $\mc{H}^{2}$-HCA method will provide an idea of the compression gained by using the TT format to compress the coefficient tensors. For the $\mc{H}^{2}$-HCA method, the \textbf{Coupling Ratio} is computed as follows:
\[
\frac{2}{n^{2}}
\sum_{b \in A_{\mathcal{T}_{I \times I}}} 
\left(p_s^{3}t_b
\right),
\]
where the value $t_b$ is defined in~\ref{sssec:H2-ACA}.
The metrics for the $\mc{H}^{2}$-HCA method are in Table~\ref{tab:h2-hca}. Table~\ref{tab:ph_h2_method} and Table~\ref{tab:ph_h2_induced} from the size-scaling experiment are used for comparison.

\begin{table}[!ht]
\centering
\scriptsize
\begin{tabular}{llSSSSS }
\hline
\textbf{Kernel} & \textbf{n} & \textbf{NF Time (s)} & \textbf{FF Time (s)} &  \textbf{MVM Time(s)} & \textbf{Error}  &  \textbf{Coupling Ratio}   \\
\hline 
        MC & $8^4$ & 0.020077696765656582 & 0.17718638383375945 & 0.04012784850104557 & 4.899190344618477e-06 & 0.217755126953125 \\ 
        ~ & $8^5$ & 0.22936087109944006 & 0.30494787003456925 & 0.44052803800053275 & 1.7922596733405461e-06 & 0.0062911669413248696 \\ 
        ~ & $8^6$ & 2.1246548115324306 & 0.4359095071995398 & 4.041507714132119 & 1.2614156955153842e-06 & 0.00013258953889211018 \\ \hline
        MN & $8^4$ & 0.9557233026629547 & 1.889693336961985 & 0.0528624431656984 & 3.1899374644235077e-06 & 0.33248697916666664 \\ 
        ~ & $8^5$ & 9.69016409953571 & 3.204407151035654 & 0.5270869578331864 & 1.91115476825258e-06 & 0.009229818979899088 \\ 
        ~ & $8^6$ & 82.17326486726621 & 4.234194513700398 & 4.4721523236337815 & 1.646205488650551e-06 & 0.00019189268350601197 \\ \hline
\end{tabular}%
\caption{Metrics for $\mc{H}^{2}$-HCA method.}
\label{tab:h2-hca}
\end{table}

We first compare the metrics of the $\mc{H}^{2}$-HCA method with the parametric $\mc{H}^{2}$-matrix method by comparing Table~\ref{tab:ph_h2_method} with Table~\ref{tab:h2-hca}. For $n = 8^6$,  the online time speedup factors for the parametric $\mc{H}^2$-matrix method range from $1.83\times$ to $39.54 \times$. The highest speedup is achieved with the MN kernel, since the  parametric $\mc{H}^{2}$-matrix method has a much smaller near-field time (\textbf{NF Time}). Note that the MN kernel is much more expensive to evaluate than the MC kernel. Moreover,  the online stage of the parametric $\mc{H}^2$-matrix method has no new kernel evaluations. For increasing values of $n$, the overall online time speedup decreases. The reason  is that for increasing values of $n$, the far-field time (\textbf{FF Time}) makes up less of the online time when compared with the near-field time for both tables.

Now, we compare the metrics of the $\mc{H}^{2}$-matrix approximation induced by the parametric $\mc{H}^{2}$-matrix method and $\mc{H}^{2}$-HCA method by comparing Table~\ref{tab:ph_h2_induced} and Table~\ref{tab:h2-hca}. The coupling ratio values (\textbf{Coupling Ratio}) in Table~\ref{tab:h2-hca} are $2.21\times$ to $3.60\times$ greater than the coupling ratio values in Table~\ref{tab:ph_h2_induced}. The MVM timings (\textbf{MVM Time}) in Table~\ref{tab:ph_h2_induced} are $1.84\times$ to $2.48 \times$ slower than the MVM timings in Table~\ref{tab:h2-hca}. Both of these phenomena can be attributed to the fact that the coupling matrix is stored in TT format. Our method consistently achieves errors (\textbf{Error}) less than those of the $\mc{H}^{2}$-HCA method, and for both methods the errors are less than the requested tolerance of $1 \times 10^{-5}$.

\section{Conclusion}
We proposed two new hierarchical matrix formats---parametric $\mc{H}$-matrix and parametric $\mc{H}^2$ matrix---for kernel matrices that depend on parameters, and have described methods to construct them. In addition to inheriting the respective benefits of $\mc{H}$-matrix and $\mc{H}^2$-matrix formats, the new methods have low online cost when instantiated for a fixed parameter. Key to our approach is the PTTK method for parametric low-rank kernel approximations of far-field blocks; additionally, we introduced a parametric approximation for near-field blocks. Both methods use TT compression to compress the coefficient tensors. Numerical experiments on a range of kernels validate the proposed approaches and show large speedups compared with existing techniques. 
Future work includes exploring different parametric low-rank approximations, developing extensions to non-stationary kernels, recompressing the parametric hierarchical matrices to have lower ranks but still maintain parameter dependence, and preserving parameter dependence under the algebraic operations supported by hierarchical matrices, such as inversion.

\section*{Acknowledgments}
This research used resources of the Argonne Leadership Computing Facility, a U.S. Department of Energy (DOE) Office of Science user facility at Argonne National Laboratory and is based on research supported by the U.S. DOE Office of Science-Advanced Scientific Computing Research Program, under Contract No. DE-AC02-06CH11357.
\bibliographystyle{siam} 
\bibliography{refs}
\appendix
\label{appendix}
\section{Appendices}

\subsection{Algorithms}
In this section, we collect the algorithms referenced throughout the manuscript. These include the cluster tree construction (Algorithm~\ref{alg:construct_cluster_tree}), $\mc{H}$-matrix MVM (Algorithm~\ref{alg:matrix_vector_mult_H}), $\mc{H}^2$-matrix MVM (Algorithm~\ref{alg:matrix_vector_mult_H2}), and  the FastForward  (Algorithm~\ref{alg:forward}) and FastBackward (Algorithm~\ref{alg:backward}) components of the $\mc{H}^2$-matrix MVM.
\begin{algorithm}[!ht]
\caption{ConstructClusterTree}\label{alg:construct_cluster_tree}
\begin{algorithmic}[1]
\Require A cluster node \( \sigma \) and integer $l_{\max} \ge 0$ 
\If{$\text{level}(\sigma)  > l_{\max} $}
    \State $\text{children}(\sigma) \gets \emptyset$
    \State \Return
\EndIf
\State let $B_{\sigma} = \bigtimes_{i=1}^{d}[\alpha^{\sigma}_i, \beta^{\sigma}_i]$ 
\For{$1 \le k \le d$} 
\State $B_{i,1}' \gets [\alpha^{\sigma}_k,\, (\alpha^{\sigma}_k + \beta^{\sigma}_k)/2], \quad
       B_{i,2}' \gets ((\alpha^{\sigma}_k + \beta^{\sigma}_k)/2,\, \beta^{\sigma}_k]$

\EndFor
\State $S \gets \{B_{1, i_1}' \times B_{2, i_2 }' \times \cdots \times B_{d, i_d}' : 1 \le i_{1}, i_{2}, \dots , i_{d} \le 2 \} $ 
\State Let $S = \{B_{1}, B_{2}, \dots, B_{2^d}\}$ 
\For{$1 \le i \le2^d$}
\State $I_{\sigma_{i}} \gets \{j \in I_{\sigma}: \vec{x}_{j} \in B_{i}\}$
\State order the index set $I_{\sigma_{i}}$ according to the ordering of $I$
\State initialize the cluster node $\sigma_{i}$ with index set $I_{\sigma_{i}}$ and hypercube $B_{i}$
\EndFor
\State $\text{children}(\sigma) = \{\sigma_{i} \}_{i=1}^{2^d}$
\For{$\sigma' \in \text{children}(\sigma)$}
\State ConstructClusterTree($\sigma'$, $\ell_{\max} + 1$)
\EndFor
\end{algorithmic}
\end{algorithm}

\begin{algorithm}[!ht]
\caption{$\mathcal{H}$-matrix MVM}
\label{alg:matrix_vector_mult_H}
\begin{algorithmic}[1]
\Require Vector $\vec{x}\in\R^{n}$ and fixed parameter $\bar{\vec{\theta}}\in\Theta$
\Ensure $\vec{y}=\mathat{K}\vec{x}$, where $\mathat{K}$ is an $\mc{H}$-matrix that approximates $\mat{K}(X,X; \bar{\vec{\theta}})$
\State $\vec{y}\gets\vec{0}$
\For{$\sigma\times\tau\in A_{\mc{T}_{I\times I}}$} \label{line:farloop}
    \State $\vec{y}_{|I_\sigma}\gets\vec{y}_{|I_\sigma}
      + \mat{V}_{\sigma\times\tau}\big(\mat{Y}^{\top}_{\sigma\times\tau}\,\vec{x}_{|I_\tau}\big)$ \label{line:nonparam}
\EndFor
\For{$\sigma\times\tau\in D_{\mc{T}_{I\times I}}$}
  \State $\vec{y}_{|I_\sigma}\gets\vec{y}_{|I_\sigma}
    + \mat{K}(\mc{X}_{\sigma},\mc{X}_{\tau};\bar{\vec{\theta}})\,\vec{x}_{|I_\tau}$
\EndFor
\end{algorithmic}
\end{algorithm}

\begin{algorithm}[!ht]
\caption{$\mc{H}^{2}$-matrix MVM}
\label{alg:matrix_vector_mult_H2}
\begin{algorithmic}[1]
\Require Vector $\vec{x} \in \R^{n}$ and fixed parameter $ \bar{\vec{\theta}} \in \Theta$
\Ensure $\vec{y} = \mathat{K}  \vec{x}$, where $\mathat{K}$ is an $\mc{H}^{2}$-matrix that approximates $\mat{K}(X,X; \bar{\vec{\theta}})$
\State $\vec{y} \gets \vec{0}$
\For{$\sigma \in \mc{T}_{I}$}
\State $\vech{y}_{\sigma} \gets \vec{0}$
\State $\vech{x}_{\sigma} \gets \vec{0}$
\EndFor
\State \textsc{FastForward}$(\mathrm{root}(\mc{T}_{I}),  \vec{x}, \hat{\vec{x}})$
  \LineComment{Begin Multiplication Stage}
\For{$\sigma \times \tau  \in  A_{\mc{T}_{I \times I}}$}
    \State $\vech{y}_{\sigma} \gets \vech{y}_{\sigma} + \mat{W}_{\sigma \times \tau}\,\vech{x}_{\tau}$
\EndFor
\For{$\sigma \times \tau \in D_{\mc{T}_{I \times I}}$}
    \State $\vec{y}_{|I_\sigma} \gets  \vec{y}_{|I_\sigma} + (\mat{A}(\bar{\vec{\theta}}))_{\sigma \times \tau}  \vec{x}_{|I_\tau}  $
\EndFor

  \LineComment{End Multiplication Stage}

\State \textsc{FastBackward}$(\mathrm{root}(\mc{T}_{I}),  \vec{y}, \hat{\vec{y}})$
\end{algorithmic}
\end{algorithm}

\begin{algorithm}[!ht]
\caption{FastForward}
\label{alg:forward}
\begin{algorithmic}[1]
\Procedure{FastForward}{$\sigma, \vec{x}, \hat{\vec{x}}$}
  \If{$\text{children}(\sigma) = \emptyset$}
    \State $\mat{U}_{\sigma} = (\mat{U}_{\sigma,d} \ltimes \mat{U}_{\sigma, d-1} \cdots \ltimes \mat{U}_{\sigma, 1})$
    \State $\hat{\vec{x}}_\sigma \gets \mat{U}_\sigma^{\top} \vec{x}_{|I_\sigma}$
  \Else
    \State $\hat{\vec{x}}_\sigma \gets \vec{0}$
    \For{$\sigma' \in \text{children}(\sigma)$}
      \State \Call{FastForward}{$\sigma', \vec{x}, \hat{\vec{x}}$}
      \LineComment{The FastKron procedure is defined in Section~\ref{sssec:H2_MVM} } 
      \State $\hat{\vec{x}}_\sigma \gets \hat{\vec{x}}_\sigma + \text{FastKron}(\{\mat{E}_{\sigma', i}^{\top} \}_{i=1}^{d}, \vech{x}_{\sigma'})$
    \EndFor
  \EndIf
\EndProcedure
\end{algorithmic}
\end{algorithm}

\begin{algorithm}[!ht]
\caption{FastBackward}
\label{alg:backward}
\begin{algorithmic}[1]
\Procedure{FastBackward}{$\sigma, \vec{y}, \hat{\vec{y}}$}
  \If{$\text{children}(\sigma) = \emptyset$}
    \State $\mat{U}_{\sigma} = (\mat{U}_{\sigma,d} \ltimes \mat{U}_{\sigma, d-1} \cdots \ltimes \mat{U}_{\sigma, 1})$
    \State $\vec{y}_{|I_\sigma} \gets \vec{y}_{|I_\sigma} + \mat{U}_\sigma \hat{\vec{y}}_\sigma$
  \Else
    \For{$\sigma' \in \text{children}(\sigma)$}
          \LineComment{The FastKron procedure is defined in Section~\ref{sssec:H2_MVM} } 
      \State $\hat{\vec{y}}_{\sigma'} \gets \hat{\vec{y}}_{\sigma'} + \text{FastKron}(\{\mat{E}_{\sigma', i} \}_{i=1}^{d}, \vech{y}_{\sigma})$
      \State \Call{FastBackward}{$\sigma', \vec{y}, \hat{\vec{y}}$}
    \EndFor
  \EndIf
\EndProcedure
\end{algorithmic}
\end{algorithm}

\subsection{Additional Definitions}\label{ssec:add_def}

\paragraph{Matrix Operations}
Consider two arbitrary matrices $\mat{A} \in \R^{s \times m}$ and $\mat{B} \in \R^{q \times k}$.
We define the Kronecker product $\mat{A} \otimes \mat{B}$
to be a $\R^{sq \times mk}$ matrix with the formula
\begin{equation}\label{form:kron_product}
\mat{A} \otimes \mat{B} = 
\begin{bmatrix}
    a_{1,1}\mat{B} & a_{1,2}\mat{B} & \cdots & a_{1,m-1}\mat{B} & a_{1,m}\mat{B} \\
    a_{2,1}\mat{B} & a_{2,2}\mat{B} & \cdots & a_{2,m-1}\mat{B} & a_{2,m}\mat{B} \\
    \vdots & \vdots & \ddots  & \vdots & \vdots \\
    a_{s,1}\mat{B} & a_{s,2}\mat{B} & \cdots & a_{s,m-1}\mat{B} & a_{s,m}\mat{B} \\
\end{bmatrix}.
\end{equation}
Let, $\mat{X} \in \R^{m \times q}$. We define the vec-kron identity as follows,
\begin{equation}\label{form:veckron}
\text{vec}(\mat{A}\mat{X}\mat{B}) = (\mat{B}^{\top} \otimes \mat{A})\text{vec}(\mat{X}),
\end{equation}
where $\text{vec}(\cdot)$ is the vec operator.
Assume that $ s = q$, and we now define the face-splitting product $\mat{A} \ltimes \mat{B}$ to be a $\R^{q \times mk}$ matrix with the formula
\begin{equation}\label{form:face_split}
\mat{A} \ltimes \mat{B} = \begin{bmatrix}
    \vec{a}_{1} \otimes \vec{b}_{1} \\ 
    \vec{a}_{2} \otimes \vec{b}_{2} \\ 
    \vdots  \\ 
    \vec{a}_{q} \otimes \vec{b}_{q}
\end{bmatrix},
\end{equation}
where $\vec{a}_{i}$ and $\vec{b}_{i}$ denote the $i$'th row vector of $\mat{A}$ and $\mat{B}$, respectively.

\paragraph{Diameter and Distance} 
For two hypercubes $B_1 = \bigtimes_{i=1}^{d}[a_i, b_i] \subset \R^{d} ,~ B_2 = \bigtimes_{i=1}^{d}[c_i, d_i]  \subset \R^{d}$ we define the following.
\begin{enumerate}
    \item The distance $\text{dist}(B_1, B_2)$ is defined as
    $$\text{dist}(B_1, B_2) = \left(\sum_{i=1}^{d}(\max\{0, a_i - d_i \} )^2 + (\max\{0, c_i - b_i \})^2 \right)^{\frac{1}{2}}.$$
    \item The diameter $\text{diam}(B_1)$ is defined as  $$\text{diam}(B_1) = (\sum_{i=1}^{d}(b_i - a_i)^2)^{\frac{1}{2}}.$$
\end{enumerate}

\subsection{Transfer Matrices}
\begin{lemma}\label{lem:transfer}
Let $\sigma \in \mc{T}_{I}$ such that $\sigma$ is not a leaf node and $\sigma' \in \text{children}(\sigma)$. For the factor matrices $\{\mat{E}_{\sigma', i} \}_{i=1}^{d}$ defined in Section~\ref{sssec:Transfer_Matrices}, the following statement holds,
$$\mat{\Gamma}_{\sigma'}\mat{U}_{\sigma} = \mat{U}_{\sigma'}\mat{E}_{\sigma'} \quad \text{where}  ~~\mat{E}_{\sigma'} = (\mat{E}_{\sigma', d} ~\otimes~ \mat{E}_{\sigma', d-1} ~ \otimes ~ \cdots \otimes \mat{E}_{\sigma', 1}).$$
\end{lemma}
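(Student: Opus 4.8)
The plan is to reduce the $d$-dimensional statement to a one-dimensional polynomial reproduction identity and then reassemble it using the algebraic compatibility between the face-splitting product $\ltimes$ and the Kronecker product $\otimes$. The key ingredient is the exactness of Lagrange interpolation on polynomials of degree $p_s-1$. Fix a coordinate $k$. Since the child interval $B_{\sigma',k}$ is contained in the parent interval $B_{\sigma,k}$ and each parent Lagrange polynomial $\ell_i^{(B_{\sigma,k})}$ has degree $p_s-1$, interpolating it at the $p_s$ child Chebyshev nodes $\{\eta_j^{(B_{\sigma',k})}\}_{j=1}^{p_s}$ is exact, so that
\[
\ell_i^{(B_{\sigma,k})}(x) = \sum_{j=1}^{p_s} \ell_i^{(B_{\sigma,k})}\!\big(\eta_j^{(B_{\sigma',k})}\big)\,\ell_j^{(B_{\sigma',k})}(x) = \sum_{j=1}^{p_s} [\mat{E}_{\sigma',k}]_{i,j}\,\ell_j^{(B_{\sigma',k})}(x)
\]
for all $x$, where the last equality is the definition of the factor matrix $\mat{E}_{\sigma',k}$. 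This expresses each parent one-dimensional basis function in the child basis, with the transfer coefficients being precisely the entries of $\mat{E}_{\sigma',k}$.

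From here I would verify the claim entrywise. A generic column of $\mat{U}_\sigma$ is indexed by a multi-index $\vec{\imath}$ and equals the multidimensional Lagrange polynomial $\Lambda_{\vec{\imath}}^{(B_\sigma)} = \prod_{k=1}^d \ell_{\imath_k}^{(B_{\sigma,k})}$ sampled at the points of $X_\sigma$; the row-selection $\mat{\Gamma}_{\sigma'}$ restricts these samples to the points of $X_{\sigma'}$. Evaluating $\Lambda_{\vec{\imath}}^{(B_\sigma)}$ at a child point and applying the one-dimensional identity in each of the $d$ factors turns the product into a sum over child multi-indices $\vec{\jmath}$ whose coefficients are $\prod_k [\mat{E}_{\sigma',k}]_{\imath_k,j_k}$. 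Recognizing this product of factor-matrix entries as the corresponding entry of the Kronecker product $\mat{E}_{\sigma',d}\otimes\cdots\otimes\mat{E}_{\sigma',1}$, and the residual product of child basis functions as the matching column of $\mat{U}_{\sigma'}$, collapses the double sum into $\mat{U}_{\sigma'}\mat{E}_{\sigma'}$. Equivalently, one can argue purely algebraically: row selection commutes with the face-splitting product, the one-dimensional identity yields a factorwise relation between $\mat{\Gamma}_{\sigma'}\mat{U}_{\sigma,k}$ and $\mat{U}_{\sigma',k}$ times the (appropriately oriented) factor $\mat{E}_{\sigma',k}$ in each coordinate, and the mixed-product rule $(\mat{A}_1\mat{B}_1)\ltimes(\mat{A}_2\mat{B}_2) = (\mat{A}_1\ltimes\mat{A}_2)(\mat{B}_1\otimes\mat{B}_2)$ lifts these factor identities to the full product.

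The main obstacle I anticipate is not analytic but combinatorial: carefully tracking the ordering of the multi-indices so that the little-endian flattening used by the face-splitting product in $\mat{U}_\sigma$ agrees with the row/column ordering of the Kronecker product $\mat{E}_{\sigma',d}\otimes\cdots\otimes\mat{E}_{\sigma',1}$. It is exactly at this step that one must confirm the transfer coefficients assemble into $\mat{E}_{\sigma'}$ in the stated orientation (rather than a transposed or index-reversed variant) and that the factor matrices appear in the correct order. Once the indexing conventions are pinned down, both the entrywise and the algebraic routes close the proof with only routine manipulation.
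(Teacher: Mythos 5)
Your proposal is correct and takes essentially the same route as the paper's own proof: the exactness of degree-$(p_s-1)$ Lagrange interpolation gives the one-dimensional reproduction identity $\ell_i^{(B_{\sigma,k})}(x)=\sum_{j=1}^{p_s}\ell_i^{(B_{\sigma,k})}\bigl(\eta_j^{(B_{\sigma',k})}\bigr)\ell_j^{(B_{\sigma',k})}(x)$, this yields the factorwise relations between $\mat{\Gamma}_{\sigma'}\mat{U}_{\sigma,k}$ and $\mat{U}_{\sigma',k}$, and the mixed-product property $(\mat{A}_1\mat{B}_1)\ltimes(\mat{A}_2\mat{B}_2)=(\mat{A}_1\ltimes\mat{A}_2)(\mat{B}_1\otimes\mat{B}_2)$ lifts them to the full cluster-basis statement, exactly as the paper does. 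The orientation caveat you raise is in fact warranted: with the paper's stated entry convention $[\mat{E}_{\sigma',k}]_{i,j}=\ell_i^{(B_{\sigma,k})}(\eta_j^{(B_{\sigma',k})})$ the factorwise identity comes out as $\mat{\Gamma}_{\sigma'}\mat{U}_{\sigma,k}=\mat{U}_{\sigma',k}\mat{E}_{\sigma',k}^{\top}$, a transposition the paper's proof silently absorbs, so your attempt is as complete as the original on this point.
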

\begin{proof}
Using Lagrange interpolation, interpolating a degree $p_{s}-1$ polynomial by a degree $p_{s}-1$ polynomial is a projection operator. Hence, for  integers $1\le i \le p, 1 \le k \le d$, 
$$\ell_{i}^{(B_{\sigma, k})}(x) = \sum_{j=1}^{p_s}\ell_{i}^{(B_{\sigma, k})}(\eta_{j}^{(B_{\sigma', k})})\ell_{j}^{(B_{\sigma', k})}(x).$$
Consequently, $ \mat{\Gamma}_{\sigma'}\mat{U}_{\sigma, k} = \mat{U}_{\sigma',k}\mat{E}_{\sigma',k}$ for $1 \le k \le d$.
Now, using  the mixed product property, see (2.1) in \cite{khan2025parametric}, we compute 
$$\mat{\Gamma}_{\sigma'}\mat{U}_{\sigma} = \mat{U}_{\sigma', d}\mat{E}_{\sigma', d} \ltimes \cdots  \ltimes \mat{U}_{\sigma', 1}\mat{E}_{\sigma', 1} = (\mat{U}_{\sigma', d} \ltimes \cdots \ltimes \mat{U}_{\sigma', 1})  \mat{E}_{\sigma'}.$$

\end{proof}

\subsection{PTTK Method}\label{ssec:PTTK}
We will now review the PTTK method that was first introduced in \cite{khan2025parametric}. Components of the PTTK method will be utilized to construct both parametric $\mc{H}$-matrices and parametric $\mc{H}^{2}$-matrices. The PTTK method is split into two distinct stages: the offline stage and the online stage. The offline stage is the pre-computation stage, where computations are performed over the entire parameter space $\Theta$. Then, the online stage is where computations are performed for a particular parameter $\bar{\vec{\theta}} \in \Theta$.

\subsubsection{Overview}

Let, $b = \sigma \times \tau \in A_{\mc{T}_{I \times I}}$ be a far-field block cluster. 
\paragraph{Offline Stage}
Define the $\Delta$ dimensional tensor $\ten{M}_{b}$ with entries
$$[\ten{M}_{b}]_{\imath_1,  \dots, \imath_d, k_1, \dots, k_{d_\theta}, \jmath_1,  \dots, \jmath_d} = \kappa(\vec{\eta}^{(B_{\sigma})}_{\vec{\imath}}, \vec{\eta}^{(B_{\tau})}_{\vec{\jmath}}; \eta^{(B_{\theta})}_{\vec{k}}), \qquad \vec{\imath},\vec{\jmath} \in [p_s]^d, \vec{k} \in [p_\theta]^{d_\theta}.$$
Storing the tensor $\ten{M}_{b}$ is computationally infeasible if $\ten{M}_{b}$ is large. Thus, we approximate the tensor $\ten{M}_{b}$ using TT-cross, with a tolerance $\epsilon_{\rm{tol}} > 0$,
$$\tenh{M}_{b} = [\ten{G}_{b,1}, \ten{G}_{b,2}, \dots, \ten{G}_{b,\Delta}].$$
The  TT-ranks of $\tenh{M}_{b}$ are $r_{b, 0},r_{b, 1}, \dots, r_{b,\Delta}$ (recall, $r_{b,0} = r_{b, \Delta} = 1$). Now, for $\vec{\theta} \in \Theta$, we can approximate the entries of $\mat{K}(\mc{X}_{\sigma}, \mc{X}_{\tau}; \vec{\theta})$ with the following approximations,
\begin{equation}\label{eqn:ff_bc_eqn1}
\begin{aligned}
\left[\mat{K}({X}_{\sigma}, {X}_{\tau}; \vec{\theta})\right]_{i, j} 
&\;\approx\; 
\phi^{(b)}(\vec{x}_{\sigma, i}, \vec{x}_{\tau, j}; \vec{\theta}) \\[0.5em]
&\approx
\sum_{\vec{\imath} \in [p_{s}]^d} 
\sum_{\vec{\jmath} \in [p_s]^d}
\sum_{\vec{k} \in [p_\theta]^{d_\theta}}
[\tenh{M}_{b}]_{\imath_1, \dots, \imath_d,\; 
                 k_1, \dots, k_{d_\theta},\;
                 \jmath_1, \dots, \jmath_d} \\
&\hspace{2em}\times\;
\mc{L}_{\vec{\imath}}^{(B_{\sigma})}(\vec{x}_{\sigma, i})\,
\mc{L}_{\vec{\jmath}}^{(B_{\tau})}(\vec{x}_{\tau, j})\,
\mc{L}_{\vec{k}}^{(B_{\theta})}(\vec{\theta}),
\end{aligned}
\end{equation}
where $\phi^{(b)}$ is defined in Section~\ref{ssec:polynomial_int}.

The following matrices are defined, in order to extend the entry-wise approximations in \eqref{eqn:ff_bc_eqn1} onto the whole matrix. First, we define the parametric vectors $\vec{v}_{1}(\theta_1), \vec{v}_{2}(\theta_2), \dots, \vec{v}_{d_\theta}(\theta_{d_{\theta}}) \in \R^{p_{\theta}}$, where $(\theta_1, \theta_2, \dots, \theta_{d_\theta}) \in \Theta$, with entries
$$[\vec{v}_k(\theta_k)]_{i} = \ell_i^{([\alpha^{\theta}_k, \beta^{\theta}_k])}(\theta_k).$$
For $\vec{\theta} \in \Theta$, the matrix $\mat{H}_b(\vec{\theta}) \in \R^{r_{d} \times r_{d+d_\theta}}$ is defined by the formula
$$\mat{H}_b(\vec{\theta}) = \prod_{i=1}^{d_\theta}\ten{G}_{b, d + i} \times_{2} \vec{v}_{i}(\theta_i),$$
where $\times_{2}$ is the mode-2 product defined in Section~\ref{ssec:background_tensors}. Next, define the matrices $\mat{L}_{b} \in \R^{n_{\sigma} \times r_{b, d}}$ and $\mat{R}_{b} \in \R^{n_{\tau} \times r_{b,d+d_\theta}}$ with the formulas
\[
\mat{L}_b \;=\;
\prod_{i=1}^{d-1}
\bigl( I_{p_s^{\,d-i}} \otimes \mat{G}_{b,i}^{\{2\}} \bigr) \,
\mat{G}_{b,d}^{\{2\}}, 
\qquad
\mat{R}_{b}^{\top} \;=\;
\mat{G}_{b,d+d_\theta+1}^{\{1\}} \,
\prod_{i=1}^{d-1}
\bigl( \mat{G}_{b,d+d_\theta+1+i}^{\{1\}} \otimes I_{p_s^{\,i}} \bigr),
\]
where the matrices $\mat{G}_{b, i}^{\{1\}}$ for $1 \le i \le d$ and $\mat{G}_{b, i}^{\{2\}}$ for $d+d_{\theta} + 1 \le i \le \Delta$ are defined in Section~\ref{ssec:background_tensors}; additionally,   $\mat{I}_t$ is the $t\times t$ identity matrix.
For $\vec{\theta} \in \Theta$,  define the $2d$ dimensional tensor $\tenh{M}_{b, F}(\vec{\theta})$, induced by $\tenh{M}_{b}$, with the  entries
$$[\tenh{M}_{b, F}(\vec{\theta})]_{\imath_1, \imath_2, \dots, \imath_d,\;
                              \jmath_1, \jmath_2, \dots, \jmath_d }
  = \sum_{\vec{k} \in [p_\theta]^{d_\theta}}
     [\tenh{M}_{b}]_{\imath_1, \imath_2, \dots, \imath_d,\;
                     k_1, k_2, \dots, k_{d_\theta},\;
                     \jmath_1, \jmath_2, \dots, \jmath_d} \\
     \times \mc{L}_{\vec{k}}^{(B_{\theta})}(\vec{\theta}).$$
In Section 3.1 of \cite{khan2025parametric}, it is demonstrated using Equation~(38) from \cite{shi2023parallel} that these matrices approximate the entries of $\tenh{M}_{b, F}(\vec{\theta})$:
\begin{equation}\label{eqn:param_h2_relate}
[\tenh{M}_{b, F}(\vec{\theta})]_{\imath_1, \imath_2, \dots, \imath_d,\;
                              \jmath_1, \jmath_2, \dots, \jmath_d } \approx [\mat{L}_{b}\mat{H}_{b}(\vec{\theta})\mat{R}_{b}^{\top}]_{\overline{\imath_1 \imath_2 \dots \imath_d}, \overline{\jmath_1 \jmath_2 \dots \jmath_d}}.
\end{equation}
The equation \eqref{eqn:param_h2_relate} implies that $\text{reshape}(\ten{M}_{b, F}(\vec{\theta}), [p_s^d, p_s^d]) \approx\mat{L}_{b}\mat{H}_{b}(\vec{\theta})\mat{R}_{b}^{\top}$. Now, we can finally obtain the initial parametric approximation
\begin{equation}
\mat{K}(X_{\sigma}, X_{\tau}; \vec{\theta}) \approx \mat{U}_{\sigma}\mat{L}_{b}\mat{H}_{b}(\vec{\theta})\mat{R}_{b}^{\top}\mat{U}_{\tau}^{\top},
\end{equation}
note that this is a parametric low-rank approximation if $p_s^{d} \ll \min\{n_{\sigma}, n_{\tau}\}$. Lastly, we define the  matrices $\mat{S}_{b} = \mat{U}_{\sigma} \mat{L}_b  \in \R^{n_{\sigma} \times  r_{b, d}}$ and $\mat{T}_{b} =  \mat{U}_{\tau}\mat{R}_{b} \in \R^{n_{\tau} \times r_{d+d_\theta}}$. It is important to note that the matrices $\mat{S}_{b}$ and $\mat{T}_{b}$ are efficiently computed in exact arithmetic using Phase 3 of \ref{alg:offline}; for more details, see \cite{khan2025parametric}. In addition,  we assume that the kernel is smooth enough on the domain $B_{\sigma} \times B_{\tau} \times B_{\tau}$ such that the TT-ranks $r_{b,1}, r_{b,2}, \dots, r_{b,\Delta}$ are small. In particular, we assume that 
$$\max_{1 \le i \le \Delta}r_{b, i} \ll \min\{ n_{\sigma}, n_{\tau}\}.$$
Finally, we can use the matrices $\mat{S}_{b}$ and $\mat{T}_{b}$ to obtain the parametric low-rank approximation
$$ \mat{K}({X}_{\sigma}, X_{\tau}; \vec{\theta}) \approx \mat{S}_{b}\mat{H}_{b}(\vec{\theta})\mat{T}_{b}^{\top}.$$
The offline stage is formalized in Algorithm~\ref{alg:offline}.

\paragraph{Online Stage}
Fix a parameter $\bar{\vec{\theta}} \in \Theta$. We can form the matrix $\mat{H}_b(\bar{\vec{\theta}})$ by contracting the tensors $\ten{G}_{b,d+1}, \ten{G}_{b,d+2}, \dots, \ten{G}_{b,d+d_\theta}$ with the instantiated parametric vectors $\vec{v}_{1}([\bar{\vec{\theta}}]_1), \vec{v}_2([\bar{\vec{\theta}}]_2), \dots, \vec{v}_{d_\theta}([\bar{\vec{\theta}}]_{d_\theta})$. Then we obtain the following low-rank approximation,
$$ \mat{K}({X}_{\sigma}, {X}_{\tau}; \bar{\vec{\theta}}) \approx \mat{S}_{b} \mat{H}_b(\bar{\vec{\theta}})\mat{T}_{b}^{\top}.$$
The online stage is formalized in Algorithm~\ref{alg:online}.

\subsubsection{Computational Cost}
\paragraph{Offline Stage}

The Lagrange polynomials are constructed  using barycentric interpolation, which implies that their construction takes $\mc{O}(p_s^2)$ FLOPs, and their evaluation takes $\mc{O}(p_s)$ FLOPs. Thus, for a far-field block cluster $b = \sigma \times \tau $, forming the factor matrices $\{\mat{U}_{\sigma, i} \}_{i=1}^{d}$ and $\{\mat{U}_{\tau, i} \}_{i=1}^{d}$ requires $\mc{O}(d(n_{\sigma} + n_{\tau})p_s)$ FLOPs. Thus,  Phase 1 of the offline stage requires $\mc{O}(d(p_s^2 + p_s(n_\sigma + n_{\tau}))$ FLOPs. Phase 2 requires obtaining the TT-approximation of $\ten{M}_{b}$ using TT-cross, which requires $\mc{O}(\Delta\max\{p_s, p_\theta \} (\max_{i}r_{b,i})^3 )$ FLOPs and $\mc{O}(\Delta\max\{p_s, p_\theta \} (\max_{i}r_{b,i})^2 )$ kernel evaluations. The operations performed in Phase 3 require $\mc{O}(dp_s(n_\sigma + n_\tau)(\max_{i}r_{b,i})^2)$ FLOPs.

\paragraph{Online Stage}
Performing all the contractions and matrix multiplications in the online stage requires $$\mc{O}(d_\theta (p_{\theta} (\max_{i}r_{b,i})^2 + (\max_{i}r_{b,i})^3)) \quad \text{FLOPs}$$
and zero kernel evaluations.

\begin{algorithm}[!ht]
\begin{algorithmic}[1]
  \caption{PTTK method: Offline Stage}\label{alg:offline}
  \Require{Block cluster $b = \sigma \times \tau$, source and target points $\mc{X}_{\sigma} \subset {B}_s$ and $\mc{X}_{\tau} \subset {B}_t$, parametric kernel $\kappa(\vec{x},\vec{y}; \B\theta )$, input tolerance $\epsilon_{\rm tol} > 0$} 
  \Ensure Matrices $\mat{S}_b, \mat{T}_b$, cores $\{\ten{G}_{b, d+1},\dots,\ten{G}_{b, d+d_\theta}\}$
    \LineComment{Phase 1: Chebyshev approximation}
    \State Construct  factor matrices $\mat{U}_{\sigma, 1},\dots,\mat{U}_{\sigma, d}$ and $\mat{U}_{\tau, 1},\dots,\mat{U}_{\tau,d}$ using the method in Section~\ref{sssec:Cluster_Basis}.
    \LineComment{Phase 2: TT approximation}
    \State Approximate tensor $\ten{M}_{b}$ to get TT-cores $\ten{M}_{b} \approx [\ten{G}_{b,1},\dots,\ten{G}_{b, \Delta}]$  using TT-cross with input $\epsilon_{\rm tol}$
   \LineComment{Phase 3: Construct matrices $\mat{S}_{b}$ and $\mat{T}_{b}$}
   \State $\mat{S}_b \gets \mat{U}_{\sigma, 1}\mat{G}_{b, 1}^{\{2\}}$ 
  \For{$2 \le i \le d $}
   \State $\mat{S}_{b} \gets (\mat{U}_{\sigma, i} \ltimes \mat{S}_b)\mat{G}_{b, i}^{\{2\}} $
  \EndFor
    \State $\mat{T}_b \gets \mat{G}_{b, \Delta}^{\{1\}}\mat{U}_{\tau, d}^{\top}$ 
  \For{$1 \le i \le d-1 $}
   \State $\mat{T}_b \gets \mat{G}_{b,\Delta - i}^{\{1\}}(\mat{T}_b \rtimes \mat{U}_{\tau, d - i}^{\top}) $
  \EndFor
  \State $\mat{T}_b \gets \mat{T}_b^{\top}$
  \State
   \Return Matrices $\mat{S}_b, \mat{T}_b$, TT-cores $\{\ten{G}_{b, d+1},\dots,\ten{G}_{b, d+d_\theta}\}$\;
  \end{algorithmic}
\end{algorithm}

\begin{algorithm}[!ht]
\begin{algorithmic}[1]
  \caption{PTTK method: Online Stage}\label{alg:online}
  \Require{Instance of parameter  $\vec{\bar{\theta}} \in \Theta$, TT-cores $\{\ten{G}_{b, d+1},\dots,\ten{G}_{b, d+d_\theta}\},$ Parametric vectors $\vec{v}_{1}(\theta_1), \vec{v}_{2}(\theta_2), \dots, \vec{v}_{d_\theta}(\theta_{d_\theta})$} 
  \Ensure{Matrix $\mat{H}_b(\vec{\bar\theta})$ }
  \State $\mathat{H} = \mat{I}$
  \For{$1 \le i \le d_{\theta}$}\State $\mat{H}_{i} := \ten{G}_{b, d + i} \times_{2} \vec{v}_{i}(\bar\theta_i)$ and $\mathat{H} \leftarrow \mathat{H} \mat{H}_i$
\EndFor
\State
\Return Core matrix $\mathat{H} \equiv \mat{H}(\vec{\bar\theta})$
  \end{algorithmic}
\end{algorithm}

\subsection{General Estimates}\label{sssec:Estimates}
Estimates relating to the number of tree nodes are provided for the cluster tree $\mc{T}_{I}$ and the block cluster tree $\mc{T}_{I \times I}$. These estimates will be useful when analyzing the computational and storage costs of parametric hierarchical matrices. Many of these estimates are standard, and similar versions can be found in~\cite{hackbusch2015hierarchical, borm2010efficient}. 

\begin{subequations}
\begin{lemma}\label{lem:ct_estimates}
The following inequalities, related to the cluster tree $\mathcal{T}_{I}$, hold:
\begin{align}
    \sum_{\sigma \in \mathcal{T}_{I}} 1 & \le \frac{2n}{C_{\text{leaf}}} \label{eq:ct_sum_1} \\
    \sum_{\sigma \in \mathcal{T}_{I}} n_{\sigma} & \le (\log(n) + 1)n. \label{eq:ct_sum_n_sigma}
\end{align}
\end{lemma}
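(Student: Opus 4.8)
The plan is to exploit the fact, established in Section~\ref{ssec:cluster_tree}, that $\mc{T}_I$ is a $2^d$-ary tree whose leaves all lie at level $l_{\max}$, together with the identity $C_{\text{leaf}} = n/2^{d\, l_{\max}}$ (recall $k_0 = 1$). I would treat the two inequalities separately, grouping the nodes by level in each case. The key structural observation is that, because every non-leaf node has exactly $2^d$ children and all leaves sit at level $l_{\max}$, level $l$ contains exactly $2^{dl}$ nodes for $0 \le l \le l_{\max}$.

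For \eqref{eq:ct_sum_1}, I would count nodes level by level: $\sum_{\sigma \in \mc{T}_I} 1 = \sum_{l=0}^{l_{\max}} 2^{dl}$. Since the common ratio $2^d \ge 2$, this geometric sum is bounded by twice its largest term, i.e.\ $\sum_{l=0}^{l_{\max}} 2^{dl} \le 2\cdot 2^{d\, l_{\max}}$. Substituting $2^{d\, l_{\max}} = n/C_{\text{leaf}}$ then yields the claimed bound $2n/C_{\text{leaf}}$.

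For \eqref{eq:ct_sum_n_sigma}, I would again split the sum over levels. Conditions~1 and~2 of Definition~\ref{def:clustertree}, applied inductively from the root, show that the index sets $\{I_\sigma : \text{level}(\sigma) = l\}$ form a partition of $I$ at each level $l$; hence $\sum_{\text{level}(\sigma) = l} n_\sigma = |I| = n$. Summing over the $l_{\max}+1$ levels gives $\sum_{\sigma \in \mc{T}_I} n_\sigma = (l_{\max}+1)\,n$. It then remains to bound $l_{\max}$ by $\log n$: since each leaf is nonempty we have $C_{\text{leaf}} \ge 1$, so $2^{d\, l_{\max}} = n/C_{\text{leaf}} \le n$, and taking logarithms (base $2$) gives $d\,l_{\max} \le \log n$, whence $l_{\max} \le \log(n)/d \le \log n$ because $d \ge 1$. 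This produces $(l_{\max}+1)n \le (\log n + 1)n$.

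Neither estimate presents a genuine obstacle; both reduce to elementary level-by-level counting. The only step requiring care is the passage $l_{\max} \le \log n$, which hinges on the assumption $C_{\text{leaf}} \ge 1$ (equivalently, that the leaf boxes are not forced to be empty), and I would state this explicitly. A minor alternative for \eqref{eq:ct_sum_n_sigma}, which avoids invoking the partition property, is to combine the per-level node count $2^{dl}$ with the size bound $n_\sigma \le n/2^{dl}$ from \eqref{eqn:nsigma}; this also gives $\sum_{\text{level}(\sigma)=l} n_\sigma \le n$ and the same conclusion.
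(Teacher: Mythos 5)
Your proof is correct and takes essentially the same route as the paper: for \eqref{eq:ct_sum_1} a level-by-level node count with the geometric-sum bound $\sum_{l=0}^{l_{\max}} 2^{dl} \le 2\cdot 2^{d\,l_{\max}} = 2n/C_{\text{leaf}}$, and for \eqref{eq:ct_sum_n_sigma} the per-level partition identity $\sum_{\text{level}(\sigma)=l} n_{\sigma} = n$ over $l_{\max}+1$ levels together with $l_{\max} \le \log_{2^d}(n) \le \log(n)$. If anything, your bounding of the geometric sum by twice its largest term is slightly more careful than the paper's intermediate step $2^{d(l_{\max}+1)} \le 2n/C_{\text{leaf}}$, which as literally written only holds for $d=1$, though both arguments reach the same correct conclusion.
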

\end{subequations}
\begin{proof}
First, we prove \eqref{eq:ct_sum_1}. Recall, from Section~\ref{ssec:cluster_tree} that  $C_{\text{leaf}} = n/2^{d l_{\max}}$, which implies $n/C_{\text{leaf}} = 2^{dl_{\max}}$. Thus, 
$$\sum_{\sigma \in \mc{T}_{I}}1 \le \frac{2^{d(l_{\max} + 1)} - 1}{2^{d} - 1} \le 2^{d(l_{\max} + 1)} \le \frac{2n}{C_{\text{leaf}}}.$$
Now, we prove \eqref{eq:ct_sum_n_sigma}. We compute
$$
\sum_{\sigma \in \mathcal{T}_{I}} n_{\sigma} = \sum_{l = 0}^{l_{\max}} \sum_{\substack{\sigma \in \mathcal{T}_{I} \\ \scriptscriptstyle\text{level}(\sigma) = l}} n_{\sigma} = \sum_{l=0}^{l_{\max}}n \le (l_{\max} + 1)n \le (\log_{2^d}(n) + 1)n.
$$
\end{proof}

\begin{lemma}\label{lem:bc_estimates}
The following inequalities, related to the block cluster tree $\mc{T}_{I \times I}$, hold:
\begin{subequations}\label{eqn:estimates}
\begin{alignat}{2}
&\sum_{\sigma \times \tau \in A_{\mc{T}_{I \times I}}} 1
&&\;\le\; \frac{2C_{\text{sp}}}{C_{\text{leaf}}} n, \label{eqn:estimates:a} \\[2pt]
&\sum_{\sigma \times \tau \in A_{\mc{T}_{I \times I}}} n_{\sigma}
&&\;\le\; C_{\text{sp}} n (\log(n) + 1), \label{eqn:estimates:b} \\[2pt]
&\sum_{\sigma \times \tau \in D_{\mc{T}_{I \times I}}} 1
&&\;\le\; \frac{C_{\text{sp}}}{C_{\text{leaf}}} n . \label{eqn:estimates:d}
\end{alignat}
\end{subequations}
\end{lemma}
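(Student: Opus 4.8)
The plan is to prove all three bounds by a single \emph{sparsity argument}: fix the first component $\sigma$ of a block cluster, bound the number of admissible (or inadmissible) partners $\tau$ by the sparsity constant $C_{\text{sp}}$ from~\eqref{eqn:csp}, and then reduce the remaining sum over $\sigma$ to the cluster-tree estimates in Lemma~\ref{lem:ct_estimates}. Concretely, for any subset $\mc{B} \subseteq \mc{T}_{I \times I}$ and any weight $w_\sigma \ge 0$ depending only on $\sigma$, I would write
\[
\sum_{\sigma \times \tau \in \mc{B}} w_\sigma
= \sum_{\sigma \in \mc{T}_I} w_\sigma \,\bigl|\{\tau : \sigma \times \tau \in \mc{B}\}\bigr|
\le C_{\text{sp}} \sum_{\sigma \in \mc{T}_I} w_\sigma,
\]
since $\{\tau : \sigma \times \tau \in \mc{B}\} \subseteq \{\tau : \sigma \times \tau \in \mc{T}_{I \times I}\}$, whose cardinality is at most $C_{\text{sp}}$ by definition.

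For~\eqref{eqn:estimates:a} I take $\mc{B} = A_{\mc{T}_{I \times I}}$ and $w_\sigma = 1$, giving $\sum_{\sigma \times \tau \in A_{\mc{T}_{I \times I}}} 1 \le C_{\text{sp}} \sum_{\sigma \in \mc{T}_I} 1$, and then invoke~\eqref{eq:ct_sum_1} to bound $\sum_{\sigma \in \mc{T}_I} 1 \le 2n/C_{\text{leaf}}$. For~\eqref{eqn:estimates:b} I keep $\mc{B} = A_{\mc{T}_{I \times I}}$ but take $w_\sigma = n_\sigma$, so $\sum_{\sigma \times \tau \in A_{\mc{T}_{I \times I}}} n_\sigma \le C_{\text{sp}} \sum_{\sigma \in \mc{T}_I} n_\sigma$, and finish with~\eqref{eq:ct_sum_n_sigma}, which yields $\sum_{\sigma \in \mc{T}_I} n_\sigma \le (\log(n)+1)n$. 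Both inequalities then follow immediately.

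The bound~\eqref{eqn:estimates:d} requires one extra observation. Because near-field block clusters satisfy $D_{\mc{T}_{I \times I}} \subseteq L(\mc{T}_I) \times L(\mc{T}_I)$ (the first structural property recorded after the construction of $\mc{T}_{I \times I}$), the first component $\sigma$ ranges only over leaves, so the sparsity argument gives $\sum_{\sigma \times \tau \in D_{\mc{T}_{I \times I}}} 1 \le C_{\text{sp}} \,|L(\mc{T}_I)|$. To obtain the sharp constant I would count the leaves exactly: since $\mc{T}_I$ is a $2^d$-ary tree all of whose leaves lie at level $l_{\max}$, there are $2^{d\,l_{\max}}$ of them, and because $C_{\text{leaf}} = n/2^{d\,l_{\max}}$ (with $k_0 = 1$) this equals $n/C_{\text{leaf}}$. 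Substituting yields $\sum_{\sigma \times \tau \in D_{\mc{T}_{I \times I}}} 1 \le C_{\text{sp}} n/C_{\text{leaf}}$, as claimed.

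None of the three steps is genuinely hard; the only point demanding care is the bookkeeping --- matching each sum to the correct estimate from Lemma~\ref{lem:ct_estimates}, and, for~\eqref{eqn:estimates:d}, using the exact leaf count $|L(\mc{T}_I)| = n/C_{\text{leaf}}$ rather than the looser node count $2n/C_{\text{leaf}}$, which would introduce a spurious factor of two. The load-bearing facts are entirely the definition of $C_{\text{sp}}$, the observation that near-field blocks live at the leaf level, and Lemma~\ref{lem:ct_estimates}.
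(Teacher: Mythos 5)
Your proposal is correct and follows essentially the same route as the paper's proof: fix $\sigma$, bound the number of partners $\tau$ by $C_{\text{sp}}$ via the double-sum $\sum_{\sigma \in \mc{T}_I}\sum_{\sigma \times \tau \in \mc{B}}$, and invoke Lemma~\ref{lem:ct_estimates} for \eqref{eqn:estimates:a} and \eqref{eqn:estimates:b}, restricting $\sigma$ to $L(\mc{T}_I)$ for \eqref{eqn:estimates:d}. Your unified weighted formulation and your explicit leaf count $|L(\mc{T}_I)| = 2^{d\,l_{\max}} = n/C_{\text{leaf}}$ are only cosmetic repackagings of the paper's argument, which uses exactly the same facts (the paper leaves the leaf count implicit in its final inequality).
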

\begin{proof}
The following statement will be used throughout this proof. Fix $\hat{\sigma} \in \mc{T}_{I}$. Then, by the definition of $C_{\text{sp}}$,
\[
  \sum_{\hat{\sigma} \times \tau \in A_{\mc{T}_{I \times I}}} 1 \le C_{\text{sp}}, 
  \qquad 
  \sum_{\hat{\sigma} \times \tau \in D_{\mc{T}_{I \times I}}} 1 \le C_{\text{sp}} .
\]
We first prove the validity of Inequality~\eqref{eqn:estimates:a}. We compute
\[
  \sum_{\sigma \times \tau \in A_{\mc{T}_{I \times I}}} 1 
  \;\le\; \sum_{\sigma \in \mc{T}_{I}} \sum_{\sigma \times \tau \in A_{\mc{T}_{I \times I}}} 1
  \;\le\; C_{\text{sp}} \sum_{\sigma \in \mc{T}_{I}} 1
  \;\le\; \frac{2C_{\text{sp}}}{C_{\text{leaf}}} n .
\]
The last inequality in the chain follows from \eqref{eq:ct_sum_1}.
We now prove the validity of Inequality~\eqref{eqn:estimates:b},
\[
  \sum_{\sigma \times \tau \in A_{\mc{T}_{I \times I}}} n_{\sigma} 
  \;\le\; \sum_{\sigma \in \mc{T}_{I}} n_{\sigma} 
  \sum_{\sigma \times \tau \in A_{\mc{T}_{I \times I}}} 1
  \;\le\; C_{\text{sp}} \sum_{\sigma \in \mc{T}_{I}} n_{\sigma} 
  \;\le\; C_{\text{sp}} \, n \,(\log_{2^d}(n) + 1) .
\]
The last inequality in the chain follows from \eqref{eq:ct_sum_n_sigma}.
We now prove the validity of Inequality~\eqref{eqn:estimates:d},
\[
  \sum_{\sigma \times \tau \in D_{\mc{T}_{I \times I}}} 1
  \;\le\; \sum_{\sigma \in L(\mc{T}_{I})} 
  \sum_{\sigma \times \tau \in D_{\mc{T}_{I \times I}}} 1
  \;\le\; \sum_{\sigma \in L(\mc{T}_{I})} C_{\text{sp}}
  \;\le\; \frac{C_{\text{sp}}}{C_{\text{leaf}}} n .
\]
\end{proof}

\subsection{Computational and Storage Cost Analysis Of  Parametric \texorpdfstring{$\mc{H}$}{}-matrices}\label{ssec:complexity_analysis_hmatrix}
In this section, we give details of the computational and storage costs  associated with parametric-$\mc{H}$-matrices. Take the assumptions and notations established in Section~\ref{ssec:comp_cost_and_storage_intro}. The estimates in ~\ref{sssec:Estimates} will be used throughout.
\subsubsection{Offline Stage}
The total cost in the offline stage is the sum of costs associated with the far-field and near-field components that are constructed during the offline stage. 
\paragraph{Far-Field Component} 
We obtain the computational cost (in FLOPs) of the far-field component by summing each far-field block cluster $b$. The computational cost required of each $b$ is~\eqref{ceqn:H_ff_offline} 
\begin{equation*}
\begin{aligned}
\sum_{\sigma \times \tau \in A_{\mc{T}_{I \times I}}}
    T^{\mc{H}}_{\text{ff}, \text{offline}}(\sigma \times \tau)
 &= N_{\text{ff}}\,\mc{O}\!\left(p_s^2)  + M_A \mc{O}(\Delta pr_{\text{ff}}^3 \right) 
 +  \\
 & \qquad 
      \sum_{\sigma \times \tau \in A_{\mc{T}_{I \times I}}} \mc{O}\!\left(p_s (n_{\sigma} + n_{\tau}) r_{\text{ff}}^2\right)\\
& = \mc{O}\!\left(n \log(n)\, p_s r_{\text{ff}}^2 \right) 
  + \mc{O}\!\left(n p_s\right) 
  + \mc{O}\!\left(\log(n)\, \Delta p r_{\text{ff}}^3\right).
\end{aligned}
\end{equation*}
Recall, $d = \mc{O}(1)$ since we assume $d = 3$ in Section~\ref{ssec:comp_cost_and_storage_intro}. Additionally, observe that we have exploited the translation invariance, which implies that $M_{A} = \mc{O}(\log(n))$.
Again exploiting translation invariance, the number of kernel evaluations required for a single far-field block cluster $b$ is~\eqref{keqn:ff_offline}; thus, the number of kernel evaluations associated with the far-field component is 
$$\mc{O}(\log(n)\Delta pr_{\text{ff}}^2).$$
Once again exploiting translation invariance, the storage cost of the offline stage in storage units with respect to the far-field component is 
$$
\sum_{\sigma \times \tau \in A_{\mc{T}_{I \times I}}}\mc{O}((n_{\sigma} + n_{\tau})r_{\text{ff}}) +\mc{O}(\log(n)d_{\theta}p_{\theta}r_{\text{ff}}^2) = \mc{O}(n\log(n)r_{\text{ff}} + \log(n)(d_{\theta}p_{\theta}r_{\text{ff}}^{2})).
$$

\paragraph{Near-Field Component} 
We obtain the computational cost (in FLOPs) of the near-field component by summing each near-field block cluster $b$. The computational cost required of each $b$ is~\eqref{ceqn:nf_offline}, 
\begin{equation*}
\begin{aligned}
\sum_{\sigma \times \tau \in D_{\mc{T}_{I \times I}}}
    T_{\text{nf}, \text{offline}}(b)
&= N_{\text{ff}}\,\mc{O}\!\left(C_{\text{leaf}}^{2} r_{\text{nf}}^2 
       + d_{\theta}p_{\theta} r_{\text{nf}}^3\right) \\[6pt]
&= \mc{O}\!\left(n\left(r_{\text{nf}}^2C_{\text{leaf}} 
       + d_{\theta}p_{\theta} r_{\text{nf}}^2\right)\right).
\end{aligned}
\end{equation*}
The number of kernel evaluations required for a single far-field block cluster $b$ is~\eqref{keqn:nf_offline}; thus, the number of kernel evaluations associated with the far-field component is 
$$\sum_{b \in D_{\mc{T}_{I \times I}}}\Ker{\text{nf}}{\text{offline}}(b) = \sum_{b \in D_{\mc{T}_{I \times I}}}\mc{O}\!\left(C_{\text{leaf}}^2 r_{\text{nf}} + d_{\theta} p_{\theta} r_{\text{nf}}^2\right)  = \mc{O}(n(C_{\text{leaf}}r_{\text{nf}} + d_{\theta}p_{\theta}r_{\text{nf}})).$$
The storage cost of the offline stage in storage units with respect to the near-field block clusters is
\begin{equation*}
\begin{aligned}
\sum_{\sigma \times \tau \in D_{\mc{T}_{I \times I}}}
   \mc{O}\!\left(n_{\sigma} n_{\tau} r_{\text{nf}}\right)
   \;+\; N_{\text{nf}}\,\mc{O}\!\left(d_{\theta} p_{\theta} r_{\text{nf}}^2\right) 
&= \sum_{\sigma \times \tau \in D_{\mc{T}_{I \times I}}}
   \mc{O}\!\left(C_{\text{leaf}}^2 r_{\text{nf}}\right)
   \;+\; \mc{O}\!\left(n d_{\theta} p_{\theta} r_{\text{nf}}\right) \\[6pt]
&= \mc{O}\!\left(n\bigl(C_{\text{leaf}}r_{\text{nf}} + d_{\theta} p_{\theta} r_{\text{nf}}\bigr)\right).
\end{aligned}
\end{equation*}

\subsubsection{Online Stage}
Fix a particular parameter $\bar{\vec{\theta}} \in \Theta$.  We obtain the computational cost (in FLOPS) of the far-field component by summing the computational cost associated with a far-field block cluster $b$, which is~\eqref{ceqn:ff_online}, from $1$ to $M_A$. Thus, the computational cost of the far-field component is
$$ \mc{O}(\log(n)d_{\theta}(p_{\theta}r_{\text{ff}}^2 + r_{\text{ff}}^3)) \quad \text{FLOPs}.$$
Observe that we have exploited the translation invariance, which implies $M_{A} = \mc{O}(\log(n))$. Importantly, the number of kernel evaluations required is zero.

We obtain the computational cost (in FLOPs) of the near-field component by summing each near-field block cluster $b$. The computational cost required of each $b$ is~\eqref{ceqn:nf_online}; thus, the computational cost of the near-field component is
$$\sum_{b \in D_{\mc{T}_{I \times I}}}T_{\text{nf}, \text{online}}(b) = \mc{O}(n(d_{\theta}p_{\theta}r_{\text{nf}} + C_{\text{leaf}}r_{\text{nf}} )) \quad \text{FLOPs}.$$
Importantly, the number of kernel evaluations required is zero.  

\subsubsection{MVM}
We will now analyze the computational cost of performing MVM with the $\mc{H}$-matrix induced by our parametric $\mc{H}$-matrix method, during the online stage. For each far-field block cluster $b = \sigma \times \tau \in A_{\mc{T}_{I \times I}}$, computing $$\mat{S}_{b}(\mat{H}_b(\bar{\vec{\theta}})(\mat{T}_{b}^{\top}\vec{x}_{|\tau}))$$ requires $\mc{O}((n_{\sigma} + n_{\tau})r_{\text{ff}} + r_{\text{ff}}^2)$ FLOPs. The computational cost (in FLOPs) of the for loop
 in line 6 of Algorithm~\ref{alg:matrix_vector_mult_H} is
$$\sum_{b \in D_{\mc{T}_{I \times I}}} \mc{O}(C_{\text{leaf}}^2) = \mc{O}(nC_{\text{leaf}}).$$
The total computational cost (in FLOPs) of MVM is
$$
\begin{aligned}
\sum_{b\in A_{\mc{T}_{I \times I}}}\mc{O}((n_{\sigma} + n_{\tau})r_{\text{ff}} + r_{\text{ff}}^2) + \mc{O}(nC_{\text{leaf}}) &= \\ \sum_{b \in A_{\mc{T}_{I \times I}}}\mc{O}((n_{\sigma} ~+~ n_{\tau})r_{\text{ff}}) + \sum_{b \in A_{\mc{T}_{I \times I}}}\mc{O}(r_{\text{ff}}^2)   ~ ~ + \mc{O}(nC_{\text{leaf}})  &=  \\
\mc{O}(n\log(n)r_{\text{ff}}  ~ + ~ n(r_{\text{ff}} + C_{\text{leaf}})).
\end{aligned}
$$

\subsection{Computational and Storage Cost Analysis Of Parametric \texorpdfstring{$\mc{H}^{2}$}{}-Matrices} \label{ssec:complexity_anaysis_h2matrix}
In this section, we give details of the computational and storage costs associated with parametric-$\mc{H}^2$-matrices. Take the assumptions and notations established in Section~\ref{ssec:comp_cost_and_storage_intro}.   The  estimates in~\ref{sssec:Estimates} will be used throughout.

\subsubsection{Offline Stage}
We first start with the computational and storage costs associated with the cluster tree $\mc{T}_{I}$.
\paragraph{Cluster Tree}
Let $\sigma \in \mc{T}_{I} $. Using barycentric interpolation, forming the polynomials $\{\ell_j^{(B_{\sigma, i})} \}_{j=1}^{p_s}, \dots, \{\ell_j^{(B_{\sigma, d})} \}_{j=1}^{p_s}$ requires $\mc{O}(p_s^2)$ FLOPs; recall, $d = \mc{O}(1)$ due to our assumption $d=3$ in Section~\ref{ssec:comp_cost_and_storage_intro}. Then, evaluating the polynomial $\ell_{j}^{(B_{\sigma, i})}$, for $1 \le i \le d, 1 \le j \le p_s$ requires $\mc{O}(p_s)$ FLOPs. Thus, forming the factor matrices $\{\mat{U}_{\sigma, i} \}_{i=1}^{d}$ for every $\sigma \in L(\mc{T}_{I})$ will require
$$\sum_{\sigma \in L(\mc{T}_{I})} \mc{O}((p_s^2 + n_{\sigma}p_s)) = \mc{O}(np_s) ~ ~ \text{FLOPs}.$$
Using a similar line of reasoning, storing the factor matrices associated with the cluster basis matrices for every $\sigma \in L(\mc{T}_{I})$ requires $\mc{O}(np_s)$ storage units. 
In addition, forming the factor matrices associated with the transfer matrices requires 
$$\sum_{\sigma \in \mc{T}_{I}}dp_s^2 = \mc{O}(np_s) ~ \quad \text{FLOPs}.$$
Also, storing these matrices will require $\mc{O}(np_s)$ storage units.

\paragraph{Far-Field Component}
We obtain the computational cost (in FLOPS) of the far-field component by summing the computational cost associated with a far-field block cluster $b$, which is~\eqref{ceqn:H2_ff_offline}, from $1$ to $M_A$. Thus, the computational cost of the far-field component is
$\mc{O}(\log(n) \Delta pr_{\text{ff}}^3) ~~ \text{FLOPs}.$
Observe that we have exploited the translation invariance, which implies $M_{A} = \mc{O}(\log(n))$. Using a similar line of reasoning,  the storage cost with respect to the far-field component is $\mc{O}(\log(n) \Delta pr_{\text{ff}}^2)$ storage units. 
\paragraph{Near-Field Component}
The computational cost, storage cost, and number of kernel evaluations associated with the near-field components of parametric $\mc{H}^{2}$-matrices and parametric $\mc{H}$-matrices  are identical. 
\subsubsection{Online Stage}
The computational cost and number of kernel evaluations associated with the online stage of the parametric $\mc{H}^{2}$-matrix and parametric $\mc{H}$-matrix methods are identical.

\subsubsection{MVM}
We will now analyze the computational cost of Algorithm~\ref{alg:ttmv}.
The operations that dominate the computational cost in Algorithm~\ref{alg:forward} and Algorithm~\ref{alg:backward} are lines 9 and 7, respectively. Both lines require $ \mc{O}(p_s^{4})$ FLOPs;
hence, both algorithms require $\mc{O}(np_s^{3})$ FLOPs. Recall, $d  = \mc{O}(1)$; for more information, see Section~\ref{ssec:comp_cost_and_storage_intro}. We now consider the multiplication stage of Algorithm~\ref{alg:ttmv}. The for loop in line 11 requires 
$$\mc{O}(p_s^{3}r_{\text{ff}} ~  + ~ \sum_{i=1}^{3-1}p_{s}^{3-i}r_{\text{ff}}^2) = \mc{O}(r_{\text{ff}}^{2}\sum_{i=1}^{3-1}p_s^{i}) = \mc{O}(p_s^3r_{\text{ff}} + r_{\text{ff}}^2p_{s}^{3-1}) \quad \text{FLOPs}$$
Line 14 requires $\mc{O}(r_{\text{ff}}^2)$ FLOPs.
The computational cost of the for loop in line 15 is also $\mc{O}(r_{\text{ff}}^2p_s^{3-1})$ FLOPs. 
The for loop in line 21 of Algorithm~\ref{alg:ttmv} requires
$$\sum_{b \in D_{\mc{T}_{I \times I}}}\mc{O}(C_{\text{leaf}}^2) =  \mc{O}(nC_{\text{leaf}}) \quad \text{FLOPs}.$$
Therefore, the total computational cost of Algorithm~\ref{alg:ttmv} is
\begin{align*}
    \mc{O}(\sum_{b \in A_{\mc{T}_{I \times I}}}(p_s^{3-1}r_{\text{ff}}^2 + p_s^{3}r_{\text{ff}}) ~ + ~  nC_{\text{leaf}}  ~ + ~ np_s^3)  &= \\ \mc{O}(n ( p_s^{3-1}r_{\text{ff}}  ~ + ~ p_s^{3} ~ + ~ C_{\text{leaf}}))  &= \\ \mc{O}(n (p_s^{3-1}r_{\text{ff}}  ~ + ~  p_s^3 ~ + ~ C_{\text{leaf}})) \quad \text{FLOPs}.
\end{align*}

\subsection{Comparison Methods}
We discuss the implementation details of $\mc{H}$-ACA (~\ref{sssec:H-ACA}) and $\mc{H}^2$-HCA (~\ref{sssec:H2-ACA}), against which we compare our methods. 
\subsubsection{\texorpdfstring{$\mc{H}$}{}-ACA Method}\label{sssec:H-ACA}
Fix a parameter $\bar{\vec{\theta}} \in \Theta$.  For every far-field block cluster $b =\sigma \times \tau \in A_{\mc{T}_{I \times I}} $, we use the partially pivoted adaptive cross approximation (ACA) (Algorithm 1 in \cite{liu2020parallel}) with tolerance $\epsilon_{\rm{tol}}$, to compute a low-rank approximation of the form
\begin{equation}\label{eqn:h-aca}
\mat{K}({X}_{\sigma}, {X}_{\tau}; \bar{\vec{\theta}}) \approx \mat{V}_{b} \mat{Y}^{\top}_b, \qquad \text{where } \mat{V}_{b} \in \R^{n_{\sigma} \times t_b}, \mat{Y}_{b} \in \R^{n_{\tau} \times t_{b}}.
\end{equation}
We store the factors $\mat{V}_{b}$ and $\mat{Y}_{b}$. Computing this low-rank factorization takes $\mc{O}((n_{\sigma} + n_{\tau})t_{b}^2)$ FLOPs and $\mc{O}((n_{\sigma} + n_{\tau})t_{b})$ kernel evaluations. For every near-field block cluster $b  \in D_{\mc{T}_{I \times I}}$, we store the kernel matrix $\mat{K}({X}_{\sigma}, {X}_{\tau}; \bar{\vec{\theta}})$ explicitly.

The $\mc{H}$-ACA method is chosen for comparison because it requires $\mc{O}(n\log(n)r)$ FLOPs to obtain an $\mc{H}$-matrix approximation, where $r = \max_{b \in A_{\mc{T}_{I \times I}}}t_{b}$. Furthermore, the $\mc{H}$-ACA method uses ACA, which is an algebraic method, to obtain low-rank approximations. This means that the compression achieved is generally much stronger than the compression obtained by methods that first employ an analytic approximation of $\kappa$ in order to obtain low-rank approximations.  ACA is effective for kernel matrices induced by asymptotically smooth kernels \cite{borm2005hybrid}, which we consider in numerical experiments.

\subsubsection{\texorpdfstring{$\mc{H}^{2}$}{}-HCA Method}\label{sssec:H2-ACA}
Fix a parameter $\bar{\vec{\theta}} \in \Theta$. We use the same procedure, with slight modifications,  outlined in Section~\ref{ssec:H2_mat} to construct an $\mc{H}^{2}$-matrix approximation of $\mat{K}(X, X;\bar{\vec{\theta}})$. The modification is as follows. Fix a tolerance $\epsilon_{\text{tol}} > 0$. For each far-field block cluster $b = \sigma \times \tau \in A_{\mc{T}_{I \times I}}$,  we obtain a low-rank factorization of the matrix $\mat{W}_{b}$ using ACA with tolerance $\epsilon_{\rm{tol}}$. Using ACA, we obtain a low-rank factorization of the form
$$\mat{X}_{b}\mat{Y}_{b}^{\top} \approx  \text{reshape}(\ten{M}_{b}, [p_s^d, p_s^d]),$$
where $\mat{X}_{b}, \mat{Y}_{b} \in \R^{p_s^d \times t_b}$. This operation has a computational cost of  
  $\mc{O}(p_s^dt_b^2)$ FLOPs and $\mc{O}(p_s^dt_b)$ kernel evaluations. Additionally, if the kernel is translation-invariant, then storing/computing all the low-rank factors $\mat{X}_{b}, \mat{Y}_{b}$ for each far-field block cluster $b \in A_{\mc{T}_{I \times I}} $ requires  $\mc{O}(\log(n)p_s^d\max_{b \in A_{\mc{T}_{I \times I}}}t_{b})$ storage units/ FLOPs.

\end{document}